\documentclass[dvips, preprint]{imsart}

\usepackage{amsmath,amsthm,amssymb,amscd,graphicx,color}
\usepackage{epsfig,amsfonts,latexsym,natbib, dsfont}
\usepackage{verbatim}
\usepackage{hyperref}
\hypersetup{
 bookmarksopen=true,
citebordercolor=0 1 0,
 linkbordercolor=1 0 0,
  menubordercolor=1 0 0,
 }
\usepackage{cleveref}
\usepackage{color}

\startlocaldefs

%%%%%%%%%%%%%%%%%%%%%%%%%%%  Befehlsabkuerzungen  %%%%%%%%%%%%%%%%%%%%%%%%%%%%%%%%%
% figure
% \newcommand{\bfi}{\begin{fig}}
% \newcommand{\efi}{\end{fig}}

% table
% \newcommand{\btab}{\begin{tab}}
% \newcommand{\etab}{\end{tab}}

% array
% \newcommand{\barr}{\begin{array}}
% \newcommand{\earr}{\end{array}}

% equation
% \newcommand{\beqq}{\begin{equation}}
% \newcommand{\eeqq}{\end{equation}}

% equation array without numbering
% \newcommand{\beao}{\begin{eqnarray*}}
% \newcommand{\eeao}{\end{eqnarray*}\noindent}

% equation array
% \newcommand{\beam}{\begin{eqnarray}}
% \newcommand{\eeam}{\end{eqnarray}\noindent}

% displaymath
% \newcommand{\bdis}{\begin{displaymath}}
% \newcommand{\edis}{\end{displaymath}\noindent}

% enumeration
% \newcommand{\ben}{\begin{enumerate}}
% \newcommand{\een}{\end{enumerate}}

% align
% \newcommand{\bali}{\begin{align}}
% \newcommand{\eali}{\end{align}}

% Symbole fuer die natuerlichen, ganzen, rationalen, reellen und komplexen Zahlen und Definitionsbereich
 \newcommand{\bbn}{\mathbb{N}}
 \newcommand{\bbz}{\mathbb{Z}}
 \newcommand{\bbr}{\mathbb{R}}

 \newcommand{\1}{\mathds{1}}
 \newcommand{\e}{\mbox{e}}
 
\newcommand{\norm}{\gamma}
\newcommand{\ka}{\kappa}
\newcommand{\Lamb}{\Lambda}

% nuetzliche zusaetzliche Befehle
% convergence in probability

% convergence in distribution

% almost sure convergence

% vague convergence

% distributional equivalence

% Grenzwerte x,n,t gegen unendlich

 \newcommand{\weak}{\Rightarrow}

% Beweisendezeichen

% Brueche und Grenzwerte im "Displaystyle"

% Kalligraphische Buchstaben

% Griechische Buchstaben

% Kardinalitaet

% Varianz

% Kovarianz

% Korrelation

% Erwartungswert
 \newcommand{\bbE}{{\mathbb{E}}}

% Wahrscheinlichkeit
 \newcommand{\bbP}{{\mathbb{P}}}

% Verzierungen

 \newcommand{\wt}{\widetilde}

% Farben

\newcommand{\tcr}{\color{red}}

%\include{include_macros}

%%%%%%%%%%%%%%%%%%%%%%%%%%%  Strukturen  %%%%%%%%%%%%%%%%%%%%%%%%%%%%%%%%%%%%%%%%%

\newtheorem{Theorem}{Theorem}[section]
\crefname{Theorem}{Theorem}{Theorems}
\Crefname{Theorem}{Theorem}{Theorems}

\crefname{Corollary}{Corollary}{Corollaries}
\Crefname{Corollary}{Corollary}{Corollaries}

\crefname{Proposition}{Proposition}{Propositions}
\Crefname{Proposition}{Proposition}{Propositions}
\newtheorem{Lemma}[Theorem]{Lemma}
\crefname{Lemma}{Lemma}{Lemmata}
\Crefname{Lemma}{Lemma}{Lemmata}

\crefname{Assumption}{Assumption}{Assumptions}
\Crefname{Assumption}{Assumption}{Assumptions}

\theoremstyle{definition}
\crefname{Definition}{Definition}{Definitions}
\Crefname{Definition}{Definition}{Definitions}
\newtheorem{Remark}[Theorem]{Remark}
\crefname{Remark}{Remark}{Remarks}
\Crefname{Remark}{Remark}{Remarks}

\crefname{Example}{Example}{Examples}
\Crefname{Example}{Example}{Examples}

%\AtBeginDocument{%
%\renewcommand{\tablename}{\footnotesize Table} %\renewcommand{\figurename}{\footnotesize Figure} %
%}

%\renewcommand{\theequation}{\arabic{section}.\arabic{equation}}
%\renewcommand{\baselinestretch}{1.2}

%\renewcommand{\thetable}{\arabic{section}.\arabic{table}}
\numberwithin{equation}{section}
%\bibliographystyle{acmtrans-ims}
%\bibliographystyle{amsplain}
%\bibliographystyle{acm}
%\frenchspacing

\numberwithin{equation}{section}
\newcommand{\eqdef}{\stackrel{d}{=}}

\newcommand{\bX}{\mathbf{X}}
\newcommand{\Zd}{\mathbb{Z}^d}
\newcommand{\SaS}{S \alpha S}
\newcommand{\mC}{\mathcal{C}}
\newcommand{\mD}{\mathcal{D}}

\newcommand{\Leb}{\mbox{Leb}}
\newcommand{\PRM}{\mbox{PRM}}

\newtheorem{remark}[Theorem]{Remark}
\newtheorem{propn}[Theorem]{Proposition}

\newtheorem{example}[Theorem]{Example}

\newtheorem{lemma}[Theorem]{Lemma}
\newtheorem{cor}[Theorem]{Corollary}

\endlocaldefs

\allowdisplaybreaks

\begin{document}

\begin{frontmatter}

% "Title of the paper"
\title{Stable Random Fields, Point Processes and Large Deviations}
\runtitle{Stable Fields and Large Deviations}

% indicate corresponding author with \corref{}
\author{\fnms{Vicky} \snm{Fasen$^\ast$}\ead[label=e2]{vicky.fasen@kit.edu}}
\address{Institute of Stochastics, Karlsruhe Institute of Technology, D-76133 Karlsruhe, Germany.\\vicky.fasen@kit.edu}\thankstext{t1}{Vicky Fasen's work was partly done when she was at RiskLab, ETH Zurich. Financial support by the Deutsche Forschungsgemeinschaft through the research grant FA 809/2-2 is gratefully acknowledged.
}
\and
\author{\fnms{Parthanil} \snm{Roy$^\dagger$}\ead[label=e1]{parthanil@isical.ac.in}}
\thankstext{t2}{Parthanil Roy's research was supported by Cumulative
Professional Development Allowance from Ministry of Human Resource Development, Government of India and the project RARE-318984 (a Marie Curie FP7 IRSES Fellowship) at Indian Statistical Institute Kolkata.}
\address{Statistics and Mathematics Unit, Indian Statistical Institute, Kolkata 700108, India.\\parthanil@isical.ac.in}

%\affiliation{Karlsruhe Institute of Technology and Indian Statistical Institute}

\runauthor{V. Fasen and P. Roy}

\begin{abstract}
We investigate the large deviation behaviour of a point process sequence
based on a stationary symmetric
$\alpha$-stable ($0<\alpha<2$) discrete-parameter random field using the
framework of \cite{hult:samorodnitsky:2010}. Depending on the ergodic theoretic and group theoretic structures of the underlying nonsingular group action, we observe different large deviation behaviours of this point process sequence. We use our results to study the large deviations of various functionals (e.g., partial sum, maxima, etc.) of stationary symmetric stable fields.
\end{abstract}
\begin{comment}
If the random field is generated
by a dissipative group action then the large deviation of the point process
sequence converges in the sense of Hult-Lindskog-Samorodnitsky (HLS) to a
Borel measure induced by a cluster Poisson process.
For the conservative
case, no general result exists even in the one-dimensional case.
We look at a specific class of stable random fields generated by conservative
actions whose effective dimension can be computed using
the structure theorem of finitely generated abelian groups. The corresponding
large deviation behavior of the point process sequence needs to
be properly normalized in order to ensure HLS-convergence.
Besides the large deviation of point processes we keep track
of the classical large deviation behavior of partial sums of the symmetric stable random field
itself
but also of the partial sums and of the order of extreme values.

%$\norm_n \leftrightarrow \gamma_n$
%$\Lamb_n \leftrightarrow N_n$

%Resnick-Lindskog Artikel (sp\"{a}ter erw\"{a}hnen)

\end{comment}

\begin{keyword}[class=AMS]
\kwd[Primary: ]{60F10; 60G55; 60G52.\\} % large deviations, point processes, stable processes,
\kwd[Secondary: ]{60G60; 60G70; 37A40} % random fields, extreme value theory, nonsingular (and infinite-measure preserving) transformations
\end{keyword}

\begin{keyword}
\kwd{large deviations; point processes; stable processes; random fields; extreme value theory; nonsingular group actions}
%\kwd{}
\end{keyword}

\end{frontmatter}

\section{Introduction}
In this paper, we investigate the large deviation behaviours of point processes and partial sums of
stationary \emph{symmetric $\alpha$-stable} ($S\alpha S$) random fields with $\alpha\in(0,2)$.
A random field $\mathbf{X}:=\{X_t\}_{t \in \mathbb{Z}^d}$ is called a \textit{stationary symmetric $\alpha$-stable} discrete-parameter random
field if for all $k \geq 1$, for all $s, t_1, t_2,\ldots, t_k \in \mathbb{Z}^d$, and for all $c_1, c_2, \ldots, c_k \in \mathbb{R}$,
$
\sum_{i=1}^k c_i X_{t_i+s}
$
follows an $S \alpha S$ distribution that does not depend on $s$. See, for example, \cite{samorodnitsky:taqqu:1994} for detailed descriptions on $S \alpha S$ distributions and processes.

The study of rare events and large deviations for heavy-tailed distributions and processes has been of considerable importance starting from the classical works of \cite{heyde:1967a, heyde:1967b, heyde:1968}, \cite{nagaev:1969b,nagaev:1969a}, \cite{nagaev:1979}; see also the technical report of \cite{cline:hsing:1991}. Some of the more recent works in this area include \cite{mikosch:samorodnitsky:2000a}, \cite{rachev:samorodnitsky:2001}, \cite{hult:lindskog:mikosch:samorodnitsky:2005}, \cite{denisov:dieker:shneer:2008}, \cite{hult:samorodnitsky:2010}, etc. When studying the probability of rare events, it is usually important not only to determine the size and the frequency of clusters of extreme values but also to capture the intricate structure of the clusters. For this reason, \linebreak \cite{hult:samorodnitsky:2010} developed a theory to study large deviation behaviors at the level of point processes
to get a better grasp on how rare events occur. Their work relies on convergence of measures that was introduced in \cite{hult:lindskog:2006}. See also the recent works of \cite{das:mitra:resnick:2013} and \cite{Lindskog:Resnick:Roy}, which extended this convergence to more general situations.

%Point processes are extremely important because a number of limit theorems for rare events
%can be derived from such point process results by a clever use of continuous mapping theorems.

Inspired by the works of \cite{davis:resnick:1985} and \cite{davis:hsing:1995}, \cite{resnick:samorodnitsky:2004} studied the asymptotic behaviour of a point process sequence induced by a stationary symmetric stable process. This work was extended to stable random fields by \cite{roy:2010a}. In the present work, we take a slightly stronger version of the point process sequence considered in \cite{roy:2010a} and use the framework introduced by \cite{hult:samorodnitsky:2010} to investigate the corresponding large deviation behaviour. We observe that this point process large deviation principle depends on the ergodic theoretic and group theoretic properties of the underlying nonsingular $\bbz^d$-action through the works of \cite{rosinski:1995, rosinski:2000} and \cite{roy:samorodnitsky:2008}. Just as in \cite{samorodnitsky:2004a, samorodnitsky:2004b} (see also \cite{roy:2010b}), we notice a phase transition that can be regarded as a passage from shorter to longer memory.

The paper is organized as follows. In Section~\ref{Section:Preliminaries}, we present  background on ergodic theory of nonsingular group actions and integral representations of $S\alpha S$ random fields, and describe a special type of convergence of measures. The large deviation behaviors of the associated point processes are considered separately
for stationary $S\alpha S$ random fields generated by dissipative group actions (reflecting shorter memory) in Section~\ref{section:dissipative}, and generated by conservative group actions (reflecting longer memory) in Section~\ref{section:conservative}. Finally, in Section~\ref{section:classical:large deviation}, we obtain the large deviation principle for the partial sum sequence of a stationary $S\alpha S$ random field using continuous mapping theorem.

%\subsection*{Notation} \vspace*{-0.3cm}
We introduce some notations that we are going to use throughout this paper. For two sequences of real numbers $\{a_n\}_{n\in\mathbb{N}}$ and $\{b_n\}_{n\in\mathbb{N}}$ the
notation $a_n \sim b_n$ means $a_n/b_n \to 1$ as $n \to \infty$. For $u, v \in \mathbb{Z}^d$, $u = (u_{1}, u_{2}, \ldots, u_{d}) \leq v =(v_{1},
v_{2}, \ldots, v_{d})$ means $u_{i} \leq v_{i}$ for all
$i=1,2,\ldots,d$; $[u, v]$ is the set \linebreak $\{t \in \mathbb{Z}^d: u \leq t \leq v\}$; $\|u\|_\infty:=\max_{1 \leq i \leq d} \,|u_{i}|$ and $\mathbf{0}_d=(0,0,\ldots,0)$, \linebreak $\mathbf{1}_d=(1,1,\ldots,1)$ are elements of $\mathbb{Z}^d$.
For $x\in\bbr$ we define $x^+:=\max(x,0)$ and $x^-:=\max(-x,0)$. Weak convergence is denoted by $\weak$. For some standard Borel space $(S,\mathcal{S})$ with
$\sigma$-finite measure $\mu$ we define the space $L^{\alpha}(S,\mu):=\left\{f:S\to\mathbb{R} \mbox{ measurable}: \|f\|_\alpha <\infty \right\}$ with $\|f\|_\alpha:=\left(\int_S|f(s)|^{\alpha}\,\mu(ds)\right)^{1/\alpha}$.
For two random variables $Y$, $Z$ (not necessarily defined on the same probability space), we write
$Y\stackrel{\text{d}}{=}Z$ if $Y$ and $Z$ are identically distributed. For two random fields $\{Y_t\}_{t \in \mathbb{Z}^d}$ and $\{Z_t\}_{t \in \mathbb{Z}^d}$, the notation $Y_t\stackrel{\text{d}}{=}Z_t$, $t \in \mathbb{Z}^d$ means that they have same finite-dimensional distributions.

\section{Preliminaries} \label{Section:Preliminaries}

In this section, we present the mathematical background on (a) nonsingular group actions, (b) stationary symmetric $\alpha$-stable random fields and (c) Hult-Lindskog-Samorodnitsky (HLS) convergence. The connection between the first two topics will be clear in this section and the third one will be useful in the entire paper.

\subsection{Nonsingular group actions}

Suppose $(G, +)$ is a countable Abelian group with identity element $\textbf{\e}$ and $(S,\mathcal{S},\mu)$ is a $\sigma$-finite standard Borel space. A collection $\{\phi_t\}_{t \in G}$ of measurable maps of $S$ into itself is called a \emph{nonsingular $G$-action} if $\phi_\textbf{\e}$ is the identity map on $S$, $\phi_{t_1+t_2}=\phi_{t_1} \circ \phi_{t_2}$ for all $t_1, t_2 \in G$ and each $\mu \circ \phi_t^{-1}$ is an equivalent measure of $\mu$; see \cite{aaronson:1997}, \cite{krengel:1985} and \cite{zimmer:1984}. Nonsingular actions are also known as {\em quasi-invariant actions}  in the literature (see \cite{varadarajan:1970}). A collection of measurable $\pm 1$-valued maps $\{c_t\}_{t\in G}$ defined on $S$ is called a (measurable) \emph{cocycle} for $\{\phi_t\}_{t \in G}$ if for all $t_1,t_2 \in G$, $c_{t_1+t_2}(s)=c_{t_2}(s) c_{t_1}\big(\phi_{t_2}(s)\big)$ for all $s \in S$.

A measurable set $W \subseteq S$ is called a \emph{wandering set} for the nonsingular $G$-action
$\{\phi_t\}_{t \in G}$ if $\{\phi_t(W):\;t\in G\}$ is a pairwise
disjoint collection. The set $S$ can be decomposed into two
disjoint and invariant parts as follows: $S=\mC \cup \mD$ where $\mathcal{D} = \bigcup_{t \in G} \phi_t(W^\ast)$ for some wandering set $W^\ast \subseteq S$, and $\mathcal{C}$ has no wandering subset of positive $\mu$-measure; see \cite{aaronson:1997} and \cite{krengel:1985}. This decomposition is called the {\em Hopf decomposition}, and the sets $\mathcal{C}$ and $\mathcal{D}$ are called {\em conservative} and {\em dissipative} parts (of $\{\phi_t\}_{t \in G}$), respectively. The action is called conservative if $S=\mathcal{C}$ and
dissipative if $S=\mathcal{D}$.

\subsection{Stationary symmetric stable random fields}

Every stationary $\SaS$ random field $\mathbf{X}$ admits an integral representation of the form
\begin{eqnarray}
X_t\eqdef\int_S c_t(s){\left(\frac{d \mu \circ \phi_t}{d
\mu}(s)\right)}^{1/\alpha}f \circ \phi_t(s) M(ds),\;\; t \in
\mathbb{Z}^d\,, \label{repn_integral_stationary}
\end{eqnarray}
where $M$ is an $S \alpha S$ random measure on some standard Borel space $(S,\mathcal{S})$ with $\sigma$-finite control measure $\mu$, $f \in L^{\alpha}(S,\mu)$, $\{\phi_t\}_{t \in \mathbb{Z}^d}$
is a nonsingular $\mathbb{Z}^d$-action on $(S, \mathcal{S},\mu)$, and $\{c_t\}_{t
\in \mathbb{Z}^d}$ is a measurable cocycle for $\{\phi_t\}$; see \cite{rosinski:1995, rosinski:2000}. We say that a stationary $S\alpha S$ random field
$\{X_t\}_{t\in \mathbb{Z}^d}$ is generated by a nonsingular
$\mathbb{Z}^d$-action $\{\phi_t\}$ on $(S,\mathcal{S}, \mu)$ if it has an
integral representation of the form $(\ref{repn_integral_stationary})$ satisfying
the full support condition
$
\bigcup_{t \in
\mathbb{Z}^d} support( f \circ \phi_t)=S,
$
which can be assumed without loss of generality.

The Hopf decomposition of $\{\phi_t\}_{t \in \Zd}$ induces the following unique (in law) decomposition of the random field $\bX$,
\begin{equation*}
X_t \eqdef  \int_{\mC} f_t(s)M(ds)+\int_{\mD} f_t(s)M(ds)=:X^{\mC}_t+X^{\mD}_t,\;\; t \in \mathbb{Z}^d,
\label{decomp_of_X_t}
\end{equation*}
into a sum of two independent random fields $\bX^\mathcal{C}$ and $\bX^\mathcal{D}$ generated by a conservative and a dissipative $\Zd$-action, respectively; see \cite{rosinski:1995, rosinski:2000}, and \cite{roy:samorodnitsky:2008}. This decomposition reduces the study of stationary $S \alpha S$ random fields to that of the ones generated by conservative and dissipative actions.

It was argued by \cite{samorodnitsky:2004a} (see also \cite{roy:samorodnitsky:2008}) that stationary $S\alpha S$ random fields generated by conservative actions have longer
memory than those  generated by dissipative actions and therefore, the following dichotomy can be observed:
\begin{equation*}
n^{-d/\alpha} \max_{\|t\|_\infty \leq n}|X_t| \Rightarrow \left\{
                                     \begin{array}{ll}
                                     c_\bX \xi_\alpha & \mbox{ if $\bX$ is generated by a dissipative action,} \\
                                     0              & \mbox{ if $\bX$ is generated by a conservative action}
                                     \end{array}
                              \right. %\label{conv_of_M_n}
\end{equation*}
as $n \rightarrow \infty$. Here $\xi_\alpha$ is a standard Frech\'{e}t type extreme value random variable with distribution function
\begin{equation}
\bbP(\xi_\alpha \leq x)=\e^{-x^{-\alpha}},\;\,x > 0,  \label{cdf_of_Z_alpha}
\end{equation}
and $c_\bX$ is a positive constant depending on the random field $\bX$. In the present work, we observe a similar phase transition in the large deviation principles of the point processes, partial sums, order statistics, etc.~as we pass from dissipative to conservative $\mathbb{Z}^d$-actions in the integral representation \eqref{repn_integral_stationary}.

\subsection{The Hult-Lindskog-Samorodnitsky convergence} \label{subsec:HLS_conv}

Fix a nonnegative integer $q$. Let $\mathbb{M}^q$ be the space of all Radon measures on
$$
\mathbb{E}^q:= [-1,1]^d \times \big([-\infty, \infty]^{[-q\mathbf{1}_d, q\mathbf{1}_d]} \setminus\{0\}^{[-q\mathbf{1}_d, q\mathbf{1}_d]}\big)
$$
equipped with the vague topology. Note that $\mathbb{E}^q$ is a locally compact, complete and separable metric space. Therefore, $C^+_K(\mathbb{E}^q)$, the space of all non-negative real-valued continuous functions defined on $\mathbb{E}^q$ with compact support, admits a countable dense subset consisting only of Lipschitz functions; see \cite{kallenberg:1983} and \cite{resnick:1987}.

Using the above mentioned countable dense subset, $\mathbb{M}^q$ can be identified with a closed subspace of $\left[0,\infty\right)^\infty$ in parallel to \cite{hult:samorodnitsky:2010}, p.~36.  In particular, it transpires that $\mathbb{M}^q$ is also a complete and separable metric space under the vague metric (see \cite{resnick:1987}, Proposition~3.17). Let $\mathbf{M}_0(\mathbb{M}^q)$ denote the space of all Borel measures $\rho$ on $\mathbb{M}^q$ satisfying $\rho(\mathbb{M}^q \setminus B(\O,\varepsilon))<\infty$ for all $\varepsilon>0$ (here $B(\O,\varepsilon)$ is the open ball of radius $\varepsilon$ around the null measure $\O$ in the vague metric). Define the \emph{Hult-Lindskog-Samorodnitsky} (HLS) convergence $\rho_n \to \rho$ in $\mathbf{M}_0(\mathbb{M}^q)$ by $\rho_n(f) \to \rho(f)$ for all $f \in C_{b,0}(\mathbb{M}^q)$,
the space of all bounded continuous functions on $\mathbb{M}^q$ that vanish in a neighbourhood of $\O$; see Theorem~2.1 in \cite{hult:lindskog:2006} and
 Theorem~2.1 in \cite{Lindskog:Resnick:Roy}. This set up is the same as in \cite{hult:samorodnitsky:2010} except that the space $\mathbb{M}^q$ includes all Radon measures in $\mathbb{E}^q$, not just the Radon point measures.

 Observe that the space $\mathbb{M}_p^q$ of Radon point measures on $\mathbb{E}^q$ is a closed subset of $\mathbb{M}^q$ (see \cite{resnick:1987}, Proposition~3.14) and hence a complete and separable metric space under the vague metric (see \cite{resnick:1987}, Proposition~3.17). The space $\mathbf{M}_0(\mathbb{M}_p^q)$ (and the HLS convergence therein) can be defined in the exact same fashion; see \cite{hult:samorodnitsky:2010}, pp.~36. In fact, $\mathbf{M}_0(\mathbb{M}_p^q)$ can be viewed as a subset of $\mathbf{M}_0(\mathbb{M}^q)$ using the following natural identification: $\rho \in \mathbf{M}_0(\mathbb{M}_p^q)$ needs to be identified with its extension to $\mathbb{M}^q$ that puts zero measure on $\mathbb{M}^q \setminus \mathbb{M}_p^q$.

For all $g_1,\,g_2 \in C^+_K(\mathbb{E}^q)$ and for all $\epsilon_1,\,\epsilon_2>0$, define a function \linebreak $F_{g_1,g_2,\epsilon_1,\epsilon_2}: \mathbb{M}^q\rightarrow \left[0,\infty\right)$ by
\begin{eqnarray} \label{eq:F}
F_{g_1,g_2,\epsilon_1,\epsilon_2}(\xi):=\left(1-\e^{-(\xi(g_1)-\epsilon_1)_{+}}\right)\left(1-\e^{-(\xi(g_2)-\epsilon_2)_{+}}\right), \;\;\xi\in\mathbb{M}^q.
\end{eqnarray}
Define, for any $\rho \in \mathbf{M}_0(\mathbb{M}^q)$, for all $g_1,\,g_2 \in C^+_K(\mathbb{E}^q)$ and for all $\epsilon_1,\,\epsilon_2>0$,
\begin{eqnarray*}
\rho(F_{g_1,g_2,\epsilon_1,\epsilon_2}):=\int_{\mathbb{M}^q}F_{g_1,g_2,\epsilon_1,\epsilon_2}(\xi) d\rho(\xi).
\end{eqnarray*}
Following verbatim the arguments in the appendix  of \cite{hult:samorodnitsky:2010}  (more specifically, Theorem A.2), the following result can be established.

\begin{propn} \label{propn:suff:condn:HLS:conv} Let $\rho,\rho_1,\rho_2,\ldots$ be in $\mathbf{M}_0(\mathbb{M}^q)$ and $$\rho_n(F_{g_1,g_2,\epsilon_1,\epsilon_2}) \to \rho(F_{g_1,g_2,\epsilon_1,\epsilon_2}) \quad \mbox{ as }n \to \infty$$  for all Lipschitz $g_1,\,g_2 \in C^+_K(\mathbb{E}^q)$ and for all $\epsilon_1,\,\epsilon_2>0$. Then the HLS convergence $\rho_n \to \rho$ holds in $\mathbf{M}_0(\mathbb{M}^q)$.
\end{propn}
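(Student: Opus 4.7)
The plan is to mirror the proof of Theorem~A.2 in \cite{hult:samorodnitsky:2010}, whose conclusion is stated there only for measures concentrated on the Radon point measures in $\mathbb{M}_p^q$, and observe that the same argument goes through on the larger space $\mathbb{M}^q$ of all Radon measures, since all that is used is completeness, separability, and the vague continuity of the evaluation maps $\xi\mapsto\xi(g)$ for $g\in C_K^+(\mathbb{E}^q)$.

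Each function $F_{g_1,g_2,\epsilon_1,\epsilon_2}$ defined in \eqref{eq:F} is bounded by $1$, continuous on $\mathbb{M}^q$ in the vague metric (continuity of $\xi\mapsto\xi(g)$ for $g\in C_K^+(\mathbb{E}^q)$), and vanishes on $\{\xi\colon\xi(g_1)\le\epsilon_1\}\cup\{\xi\colon\xi(g_2)\le\epsilon_2\}$, which contains an open ball $B(\O,\varepsilon)$ for some $\varepsilon>0$ depending on $(g_1,g_2,\epsilon_1,\epsilon_2)$. Thus $F_{g_1,g_2,\epsilon_1,\epsilon_2}\in C_{b,0}(\mathbb{M}^q)$, so the hypothesis is consistent. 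The first step I would carry out is a \emph{tightness} estimate: given $\varepsilon>0$, choose Lipschitz $g\in C_K^+(\mathbb{E}^q)$ and $0<\epsilon<\varepsilon$ so that $\mathbf{1}_{\mathbb{M}^q\setminus B(\O,\varepsilon)}(\xi)\le c\,F_{g,g,\epsilon,\epsilon}(\xi)$ for a constant $c=c(g,\epsilon)$; the assumed convergence then forces $\sup_n\rho_n(\mathbb{M}^q\setminus B(\O,\varepsilon))<\infty$, giving uniform $\mathbf{M}_0$-boundedness of $\{\rho_n\}$.

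Next I would upgrade the assumed convergence of $\rho_n$ on the two-factor family to convergence on the multiplicative algebra generated by the one-factor functionals $\psi_{g,\epsilon}(\xi):=1-e^{-(\xi(g)-\epsilon)_+}$. The identity $\psi_{g,\epsilon}^2=F_{g,g,\epsilon,\epsilon}$ together with the analogous identities $\psi_{g_1,\epsilon_1}\psi_{g_2,\epsilon_2}=F_{g_1,g_2,\epsilon_1,\epsilon_2}$ shows that the products of pairs are already covered by hypothesis; products of $k\ge 3$ factors then follow by an induction in which one uses convergence against a pair $F_{g_1,g_2,\epsilon_1,\epsilon_2}$ multiplied by another continuous bounded factor that is approximated, via the first tightness step, by truncated Riemann-sum approximations already in the span of the two-factor class. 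A density argument in $C_K^+(\mathbb{E}^q)$ (approximating an arbitrary $g\in C_K^+(\mathbb{E}^q)$ by Lipschitz ones uniformly, combined with continuity of $\xi\mapsto\xi(g)$) removes the Lipschitz restriction on the $g_i$'s.

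Finally, I would invoke Stone--Weierstrass on the closed, hence Polish, subspace $\mathbb{M}^q\setminus B(\O,\varepsilon)$: the algebra generated by the $\psi_{g,\epsilon}$'s contains the constants (after the obvious modification), separates points of $\mathbb{M}^q$ (two distinct Radon measures differ on some $g\in C_K^+(\mathbb{E}^q)$), and is therefore uniformly dense in the bounded continuous functions on the tightness sets. Combined with the uniform $\mathbf{M}_0$-boundedness established in the first step, an $\epsilon/3$-approximation argument yields $\rho_n(h)\to\rho(h)$ for every $h\in C_{b,0}(\mathbb{M}^q)$, which is precisely the HLS convergence $\rho_n\to\rho$ in $\mathbf{M}_0(\mathbb{M}^q)$.

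The main obstacle is the density/Stone--Weierstrass step of paragraph three: although $\mathbb{M}^q$ itself is Polish rather than locally compact, the restriction to $\mathbb{M}^q\setminus B(\O,\varepsilon)$ is needed to apply the classical theorem, and one must then patch these restrictions together as $\varepsilon\downarrow 0$ using the uniform $\mathbf{M}_0$-bound and the fact that every $h\in C_{b,0}(\mathbb{M}^q)$ vanishes in some $B(\O,\varepsilon_0)$. Once this is handled, the verification of the hypotheses of the Portmanteau-type characterisation of HLS convergence in \cite{hult:lindskog:2006} and \cite{Lindskog:Resnick:Roy} is routine.
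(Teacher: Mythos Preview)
Your proposal takes essentially the same approach as the paper: both defer to Theorem~A.2 of \cite{hult:samorodnitsky:2010} and observe that its argument carries over verbatim from the space $\mathbb{M}_p^q$ of Radon point measures to the larger space $\mathbb{M}^q$ of all Radon measures. The paper provides no further detail beyond that reference, so your sketch of the tightness/Stone--Weierstrass mechanics goes beyond what the paper itself writes out.
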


\section{The dissipative case} \label{section:dissipative}

Suppose $\bX:=\{X_t\}_{t \in \mathbb{Z}^d}$ is a stationary $S \alpha S$ random field generated by a dissipative group action. In this case, it has been established by \cite{rosinski:1995, rosinski:2000} and \cite{roy:samorodnitsky:2008} that $\bX$ is a stationary {\em mixed moving average random field} (in the sense of \cite{surgailis:rosinski:mandrekar:cambanis:1993}). This means that $\bX$ has the integral representation
\begin{eqnarray}
X_t \eqdef \int_{W \times
{\mathbb{Z}^d}}f(v,u-t)\,M(dv,du),\;\;\;t \in {\mathbb{Z}^d}\,,
\label{repn_mixed_moving_avg}
\end{eqnarray}
where $f \in L^{\alpha}(W \times {\mathbb{Z}^d}, \nu \otimes \zeta)$, $\nu$ is a
$\sigma$-finite measure on a standard Borel space $(W,
\mathcal{W})$, $\zeta$ is the counting measure on $\mathbb{Z}^d$, and $M$ is a $\SaS$ random measure on $W \times {\mathbb{Z}^d}$ with control measure $\nu\otimes \zeta$
(cf. \cite{samorodnitsky:taqqu:1994}).

Suppose $\nu_\alpha$ is the
symmetric measure on $[-\infty,\infty] \setminus \{0\}$ given by
\begin{equation}
\nu_\alpha\left(x,\infty\right]=\nu_\alpha\left[-\infty,-x\right)=x^{-\alpha},\;\;x>0\,. \label{defn:nu_alpha}
\end{equation}
Let
\begin{equation}
\sum_{i=1}^\infty \delta_{(j_i,v_i,u_i)} \sim \PRM(\nu_\alpha \otimes \nu
\otimes \zeta) \label{PRM:underlying}
\end{equation}
be a Poisson random measure on $([-\infty,\infty] \setminus \{0\}) \times W
\times \mathbb{Z}^d$ with mean measure $\nu_\alpha \otimes \nu
\otimes \zeta$. Then from \eqref{repn_mixed_moving_avg}, it follows that
$\mathbf{X}$ has the following series representation:
$
X_t \eqdef {C_\alpha}^{1/\alpha} \sum_{i=1}^\infty j_i f(v_i,u_i-t),\;\;t \in
\mathbb{Z}^d
$,
where $C_\alpha$ is the stable tail constant given by
\begin{equation}
C_\alpha = {\left(\int_0^\infty x^{-\alpha} \sin{x}\,dx
\right)}^{-1}
 =\left\{
  \begin{array}{ll}
  \frac{1-\alpha}{\Gamma(2-\alpha) \cos{(\pi
\alpha/2)}},&\mbox{\textit{\small{if }}}\alpha \neq 1,\\
  \frac{2}{\pi},
&\mbox{\textit{\small{if }}}\alpha = 1.
  \end{array}
 \right. \label{defn:C_alpha}%\label{C_alpha_defn}
\end{equation}
%See, for example, \cite{samorodnitsky:taqqu:1994}.
For simplicity of notations, we shall drop the factor ${C_\alpha}^{1/\alpha}$ and redefine $X_t$ as
\begin{equation}
X_t :=\sum_{i=1}^\infty j_i f(v_i,u_i-t),\;\;t \in
\mathbb{Z}^d\,. \label{repn_Possion_integral_X_n}
\end{equation}

 Mimicking the arguments given in \cite{resnick:samorodnitsky:2004}, it was established in Theorem~3.1 of \cite{roy:2010a} that the weak convergence
\begin{eqnarray*} \label{Roy:point process}
\sum_{\|t\|_\infty \leq n} \delta_{(2n)^{-d/\alpha} X_t} \Rightarrow \sum_{i=1}^\infty \sum_{u \in \mathbb{Z}^d} \delta_{j_i f(v_i,u)} \quad\mbox{ as } n\to\infty
\end{eqnarray*}
holds on the space of Radon point measures on $[-\infty,\infty] \setminus \{0\}$ equipped with the vague topology. Clearly the above limit is a cluster Poisson process.

For each $q \in\bbn_0$,  define a random vector field % $\big\{\widetilde{X}^q_t:[-q\mathbf{1}, q\mathbf{1}] \to \mathbb{R}\big\}_{t \in \mathbb{Z}^d}$ as
\begin{equation}
\widetilde{X}^q_t:=\{X_{t-w}\}_{w \in [-q\mathbf{1}_d, q\mathbf{1}_d]}. \label{defn:of:tilde:X}
\end{equation}
We take a sequence $\gamma_n$ satisfying $n^{d/\alpha}/\gamma_n \to 0$ so that for all $q\geq 0$,
\begin{equation}
N^{q}_n:=\sum_{\|t\|_\infty \leq n} \delta_{(n^{-1}t,\,\gamma_n^{-1}\widetilde{X}^q_t)}   \label{defn:of:N_n}
\end{equation}
converges almost surely to $\O$, the null measure in the space $\mathbb{M}^q$ defined in Section~\ref{subsec:HLS_conv}. We define a map
$
\psi: ([-\infty, \infty] \setminus \{0\}) \times W \times \mathbb{Z}^d \to [-\infty, \infty]^{[-q\mathbf{1}_d, q\mathbf{1}_d]}
$
by
\begin{equation}
\psi(x, v, u)= \{xf(v,u-w)\}_{w\in [-q\mathbf{1}_d, q\mathbf{1}_d]}     \label{defn:of:psi}
\end{equation}
in order to state the following result, which is an extension of Theorem~4.1 in \cite{hult:samorodnitsky:2010} to mixed moving average stable random fields. In particular, it describes the large deviation behavior of point processes induced by such fields.

\begin{Theorem} \label{thm:main:proc:diss}
Let $\{X_t\}_{t\in\mathbb{Z}^d}$ be the stationary symmetric $\alpha$-stable  mixed moving average  random field defined by \eqref{repn_Possion_integral_X_n} and $N^q_n$ be as in \eqref{defn:of:N_n} with
\begin{eqnarray} \label{gamma}
    n^{d/\alpha}/\gamma_n \to 0 \quad \mbox{ as } n\to\infty.
\end{eqnarray}
  Then for all $q \geq 0$, the HLS convergence
\begin{equation}
m^q_n(\cdot):=\frac{\gamma_n^\alpha}{n^d}\bbP(N^q_n \in \cdot) \rightarrow m^q_\ast(\cdot) \quad \mbox{ as } n\to\infty, \label{conv:m_n}
\end{equation}
holds in the space $\mathbf{M}_0(\mathbb{M}_p^q)$, where $m^q_\ast$ is a measure on $\mathbb{M}_p^q$ defined by
\begin{align*}
m^q_\ast(\cdot):=&(\Leb|_{[-1,1]^d} \otimes \nu_\alpha \otimes \nu)\Big( \Big\{(t,x,v) \in [-1,1]^d \times ([-\infty,\infty] \setminus \{0\}) \times W: \\
&\;\;\;\;\;\;\;\;\;\;\;\;\;\;\;\;\;\;\;\;\;\;\;\;\;\;\;\;\;\;\;\;\;\;\;\;\;\;\;\;\;\;\;\;\;\;\;\;\;\;\;\;\;\;\;\;\;\;\;\;\;\;\;\;\;\;\;\;\;\;\;\;\;\;\sum_{u \in \mathbb{Z}^d} \delta_{\left(t,\,\psi(x, v, u) \right)} \in \cdot\Big\}\Big)
\end{align*}
and satisfying $m^q_\ast(\mathbb{M}_p^q \setminus B(\O,\varepsilon))<\infty$ for all $\varepsilon>0$.
\end{Theorem}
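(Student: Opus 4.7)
By Proposition~\ref{propn:suff:condn:HLS:conv}, it suffices to verify that
\[
m_n^q(F_{g_1,g_2,\epsilon_1,\epsilon_2})\longrightarrow m_*^q(F_{g_1,g_2,\epsilon_1,\epsilon_2})
\]
for every Lipschitz pair $g_1,g_2\in C_K^+(\mathbb{E}^q)$ and every $\epsilon_1,\epsilon_2>0$. The finiteness $m_*^q(\mathbb{M}_p^q\setminus B(\O,\varepsilon))<\infty$ will follow from the very same estimates along the way, since subsets bounded away from $\O$ are controlled by such $F$-functionals and $\|f\|_\alpha<\infty$.

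\textbf{Big-jump reduction.} Because $g_1,g_2$ have compact support in $\mathbb{E}^q$, there exists $\kappa>0$ with $g_i(t,y)=0$ whenever $\|y\|_\infty<\kappa$, so that $F_{g_1,g_2,\epsilon_1,\epsilon_2}(N_n^q)>0$ forces $\gamma_n^{-1}|X_t|\ge\kappa$ for some $t$ with $\|t\|_\infty\le n+q$. Working with the Poisson representation \eqref{repn_Possion_integral_X_n}--\eqref{PRM:underlying}, I would implement the one-big-jump strategy of \cite{hult:samorodnitsky:2010}: pick a small $\delta\in(0,\kappa)$, split the underlying PRM into its \emph{large}-point part $\{i:|j_i|>\delta\gamma_n\}$ and its \emph{small}-point part $\{i:|j_i|\le\delta\gamma_n\}$, and correspondingly decompose $X_t=X_t^{(>\delta,n)}+X_t^{(\le\delta,n)}$. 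To make the large-point intensity finite, I would first truncate $f$ to $f\,\1_{V_\epsilon}$ with $\nu(V_\epsilon)<\infty$ and $\|f-f\,\1_{V_\epsilon}\|_\alpha<\epsilon$, absorbing the $(f-f\,\1_{V_\epsilon})$-part into the small-jump noise. Under the hypothesis \eqref{gamma}, the probability of having two or more large points in the window $V_\epsilon\times[-(n+q)\mathbf{1}_d,(n+q)\mathbf{1}_d]$ is $O\bigl((n^d/\gamma_n^\alpha)^2\bigr)=o(n^d/\gamma_n^\alpha)$, so the leading contribution comes from configurations with exactly one large point.

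\textbf{Single-point cluster and the limit.} Conditional on exactly one large point $(j,v,u)$, its contribution to $N_n^q$, after the change of variable $s=u-t$, is
\[
\sum_{s\in\bbz^d:\,\|u-s\|_\infty\le n}\delta_{\bigl(n^{-1}(u-s),\,\psi(\gamma_n^{-1}j,v,s)\bigr)}.
\]
Substituting $j=x\gamma_n$ and writing $u=\lfloor nt\rfloor$ for $t\in(-1,1)^d$, this converges as $n\to\infty$ to $\sum_{s\in\bbz^d}\delta_{(t,\psi(x,v,s))}$, precisely the cluster appearing in the definition of $m_*^q$. Applying Campbell's formula to the PRM, combined with the scalings $\gamma_n^\alpha\,\nu_\alpha(d(x\gamma_n))=\nu_\alpha(dx)$ (self-similarity of $\nu_\alpha$) and the Riemann-sum convergence $n^{-d}\zeta(du)\to\Leb|_{[-1,1]^d}(dt)$, and then sending $\delta\downarrow 0$ together with $V_\epsilon\uparrow W$, yields the desired identity $m_*^q(F_{g_1,g_2,\epsilon_1,\epsilon_2})$.

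\textbf{Main obstacle.} The delicate step is controlling the small-jump residual, namely showing that for every $\eta>0$,
\[
\limsup_{\delta\downarrow 0}\,\limsup_{n\to\infty}\,\frac{\gamma_n^\alpha}{n^d}\,\bbP\!\left(\max_{\|t\|_\infty\le n+q}\bigl|X_t^{(\le\delta,n)}\bigr|>\eta\gamma_n\right)=0,
\]
so that the residual cannot alter the $F$-values (here we use that $F$ is Lipschitz and bounded by $1$). The assumption $f\in L^\alpha(W\times\bbz^d,\nu\otimes\zeta)$ together with the scaling condition \eqref{gamma} are crucial: the bound can be derived from a stable Fuk--Nagaev-type tail inequality for the truncated stable sum, applied uniformly over the $(2n+2q+1)^d$ shifts, in the spirit of the proof of Theorem~4.1 in \cite{hult:samorodnitsky:2010}. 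The remaining issues (negligibility of the two-or-more-large-point event, continuity of $F$ at the limiting cluster, and interchange of the $\delta\downarrow 0$ and $V_\epsilon\uparrow W$ limits with $n\to\infty$) are handled by standard dominated-convergence arguments using the global bound $|F|\le 1$.
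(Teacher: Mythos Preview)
Your plan shares the paper's one-big-jump heuristic but implements it quite differently. The paper does \emph{not} split $X_t$ into large- and small-jump parts. Instead it introduces the \emph{decoupled} point process
\[
\widehat N_n^q:=\sum_{i=1}^\infty\sum_{\|t\|_\infty\le n}\delta_{(n^{-1}t,\,\gamma_n^{-1}\psi(j_i,v_i,u_i-t))}
\]
(all Poisson points retained, each contributing its own cluster) and proves three approximations: (i) $m_n^q(F)\approx\widehat m_n^q(F)$ by comparing $X_t$ to its single largest-modulus summand $Y_t$ via the events $A(\theta,n)$, $D(\theta,n)$ of \cite{resnick:samorodnitsky:2004}; (ii) $\widehat m_n^q(F)\approx\widetilde m_n^q(F)$, a linearised version that \emph{sums} the $F$-contributions of distinct indices $i$, via a Poisson second-moment bound on the event that two indices hit the support of $g_1,g_2$; and (iii) $\widetilde m_n^q(F)\to m_*^q(F)$ by truncating $f$ in the $u$-variable to $W\times[-T\mathbf 1_d,T\mathbf 1_d]$ followed by a Riemann-sum computation and a converging-together argument. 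No Fuk--Nagaev-type tail inequality enters; the residual control comes entirely from Poisson moment estimates and the RS2004 largest-summand device. Your route, closer to how \cite{hult:samorodnitsky:2010} handle ordinary moving averages, is in principle workable and arguably more direct once the tail bound for the truncated stable residual is in hand.

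One point in your outline needs repair, however. After truncating $f$ only in the $v$-variable to $V_\epsilon$, the set $\{i:|j_i|>\delta\gamma_n,\ v_i\in V_\epsilon\}$ is still a.s.\ infinite, since $u_i$ ranges over all of $\mathbb Z^d$. Restricting attention to the $u$-window $[-(n+q)\mathbf 1_d,(n+q)\mathbf 1_d]$ is not automatic: a large point with $u_i$ far outside that box still contributes to $X_t$ for $\|t\|_\infty\le n$ whenever $f(v_i,u_i-t)\neq 0$, and $f$ need not have compact $u$-support. To make your two-point estimate valid you must either also truncate $f$ in $u$ (as the paper does in step (iii)) or replace the crude threshold $|j_i|>\delta\gamma_n$ by the adaptive one the paper uses for its event $B_n$, namely membership in
\[
L_n=\Bigl\{(x,v,u):|x|\ge\eta\gamma_n\Bigl(\sum_{\|t\|_\infty\le n}\sum_{\|w\|_\infty\le q}|f(v,u-t-w)|^\alpha\Bigr)^{-1/\alpha}\Bigr\},
\]
whose intensity is $O(n^d/\gamma_n^\alpha)$ directly from $\|f\|_\alpha<\infty$ without any preliminary truncation.
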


%\begin{remark}
%A special consequence of \cref{thm:main:proc:diss} and the definition of HLS-convergence
%is that
%\begin{eqnarray*}
%\frac{\gamma_n^\alpha}{(2n)^d}\bbP\left(\sum_{\|t\|_\infty \leq n} \delta_{\gamma_n^{-1}X_t} \in \cdot\right)
%\rightarrow % (\nu_\alpha \otimes \nu) \Big\{(x,v) \in ([-\infty,\infty] \setminus \{0\}) \times W:
%\sum_{u \in \mathbb{Z}^d} \delta_{xf(v,u)} \in \cdot\Big\}\\
%&& =
%\bbP\left(\sum_{i=1}^{\infty}\sum_{u \in \mathbb{Z}^d} \delta_{j_if(v_i,u)} \in \cdot\right) \quad \mbox{ as } n\to\infty,
%\end{eqnarray*}
%on $\mathbf{M}_0(\wt{\mathbb{M}}_p)$ with
%$\wt{\mathbb{M}}_p$ the space of Radon point measures on
%$[-\infty,\infty]\backslash\{0\}$, where $\sum_{i=1}^\infty \delta_{(j_i,v_i)} \sim \PRM(\nu_\alpha \otimes \nu)$. This result is in analogy
%to \eqref{Roy:point process} (cf. \cite{roy:2010a}, Theorem~3.1). % which says that as $n\to\infty$,
%\begin{eqnarray*}
%    \sum_{\|t\|_\infty \leq n} \delta_{(2n)^{-d/\alpha}X_t}\weak\sum_{i=1}^{\infty}\sum_{u \in \mathbb{Z}^d} \delta_{j_if(v_i,u)}
%\end{eqnarray*}
%\hfill$\Box$
%\end{remark}

The proof of the above result is given in the next section. The following statement is a direct  consequence of \cref{thm:main:proc:diss} in a similar pattern as in \cite{hult:samorodnitsky:2010}.

\begin{cor}\label{Corollary:Order Statistics}
Let $X_{i:n}$ be the $i$-th order statistic of \linebreak $\{X_t\}_{t\in [-n\mathbf{1}_d, n\mathbf{1}_d]}$ in
descending order, i.e., $X_{1:n}\geq X_{2:n}\geq \ldots \geq X_{(2n+1)^d\,:\,n}$\,.
Moreover, for all $v \in W$, let  $f_i^+(v)$ be the $i$-th
    order statistic of the sequence $\{f^+(v,u)\}_{u\in\bbz^d}$ in descending order
    and $f_i^-(v)$ be the $i$-th
    order statistic of the sequence $\{f^-(v,u)\}_{u\in\bbz^d}$ in descending order. Then
    for $y_1,\ldots,y_m> 0$,
    \begin{eqnarray*}
       \lefteqn{\lim_{n\to\infty}\frac{\gamma_n^{\alpha}}{n^d}
            \bbP(X_{1:n}>\gamma_ny_1, X_{2:n}>\gamma_ny_2,\ldots,X_{m:n}>\gamma_ny_m)}\\
        &&=2^d\int_W\Big(\min_{i=1,\ldots,m}(f_i^+(v)y_i^{-1})^{\alpha}+\min_{i=1,\ldots,m}(f_i^-(v)y_i^{-1})^{\alpha}\Big)
                    \nu({\rm d}v).
    \end{eqnarray*}
    In particular, for all $a>0$ and $n \geq 1$, if we define $\tau^a_n:=\inf\{\|t\|_{\infty}: X_t>a\gamma_n\}$, then
    \begin{eqnarray*}
        \lim_{n\to\infty}\frac{\gamma_n^{\alpha}}{n^d}
            \bbP(\tau^a_n\leq \lambda n)
        = (2\lambda)^d a^{-\alpha}\int_W(\sup_{u\in\bbz^d}f^+(v,u))^{\alpha}+(\sup_{u\in\bbz^d}f^-(v,u))^{\alpha}
                    \nu({\rm d}v).
    \end{eqnarray*}
    %\begin{eqnarray*}
    %\lefteqn{\lim_{n \to\infty}\frac{\gamma_n^{\alpha}}{n}
       %     \bbP\left(\max_{\|t\|_{\infty}\leq n}X_t>\gamma_ny\right)}\\
        %    &&= y^{-\alpha}2^d\int_W\Big((\sup_{u\in\bbz^d}f^+(v,u))^{\alpha}
           %       +(\sup_{u\in\bbz^d}f^-(v,u))^{\alpha}\Big)
              %      \nu({\rm d}v).
   % \end{eqnarray*}
\end{cor}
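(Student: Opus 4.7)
The plan is to derive both assertions as direct applications of \cref{thm:main:proc:diss} with $q=0$. For the joint-order-statistic statement, I would first translate the event into a condition on the point process $N_n^0$: since $X_{i:n}>\gamma_n y_i$ is equivalent to $\#\{t:\|t\|_\infty\le n,\ X_t>\gamma_n y_i\}\ge i$, the event in question equals $\{N_n^0\in A_{\vec y}\}$ where
\[
A_{\vec y}:=\big\{\xi\in\mathbb{M}_p^0:\xi([-1,1]^d\times(y_i,\infty])\ge i,\ i=1,\ldots,m\big\}.
\]
Every $\xi\in A_{\vec y}$ carries mass at least $m$ on the set $[-1,1]^d\times(\min_i y_i,\infty]$, so $A_{\vec y}$ is bounded away from $\O$ in the vague metric. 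Once $A_{\vec y}$ is shown to be an $m_*^0$-continuity set, the HLS convergence provided by \cref{thm:main:proc:diss}, combined with a Portmanteau-type argument for $\mathbf M_0$-convergence, gives $\tfrac{\gamma_n^\alpha}{n^d}\bbP(N_n^0\in A_{\vec y})\to m_*^0(A_{\vec y})$.

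The second step is to evaluate $m_*^0(A_{\vec y})$ from the explicit formula in \cref{thm:main:proc:diss}. The $t$-integration over $[-1,1]^d$ pulls out a factor $2^d$. For fixed $v\in W$ and $x>0$, the inequality $\#\{u:xf(v,u)>y_i\}\ge i$ is equivalent to $xf_i^+(v)>y_i$, so the conjunction over $i=1,\ldots,m$ reduces to $x>\max_i y_i/f_i^+(v)$. Using $\nu_\alpha((c,\infty])=c^{-\alpha}$ for $c>0$, this half-line contributes $\min_i(f_i^+(v)y_i^{-1})^\alpha$, and by the symmetry of $\nu_\alpha$ the half-line $x<0$ contributes $\min_i(f_i^-(v)y_i^{-1})^\alpha$. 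Integrating against $\nu$ in $v$ then yields the claimed identity.

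For the hitting-time assertion I would take $m=1$, $y_1=a$, and work with the set $A_\lambda:=\{\xi:\xi([-\lambda,\lambda]^d\times(a,\infty])\ge1\}$ (after invoking \cref{thm:main:proc:diss} on an enlarged box when $\lambda>1$). Since
\[
\{\tau_n^a\le\lambda n\}=\big\{N_n^0\big([-\lfloor\lambda n\rfloor/n,\lfloor\lambda n\rfloor/n]^d\times(a,\infty]\big)\ge1\big\},
\]
the same Portmanteau reasoning together with $\lfloor\lambda n\rfloor/n\to\lambda$ produces $\tfrac{\gamma_n^\alpha}{n^d}\bbP(\tau_n^a\le\lambda n)\to m_*^0(A_\lambda)$. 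The computation of $m_*^0(A_\lambda)$ is identical to the previous one with the $t$-integration now giving $(2\lambda)^d$ and with $f_1^\pm(v)=\sup_{u\in\bbz^d}f^\pm(v,u)$, so the stated formula drops out.

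The principal technical obstacle is the continuity-set verification $m_*^0(\partial A_{\vec y})=0$. My argument would be that under $m_*^0$, the mass concentrated on configurations having a point with second coordinate exactly $y$ is controlled by the $(\Leb\otimes\nu_\alpha\otimes\nu)$-measure of the countable union $\bigcup_{u\in\bbz^d}\{xf(v,u)=y\}$, which vanishes for Lebesgue-almost-every $y>0$ because $\nu_\alpha$ is atomless. Hence $A_{\vec y}$ is an $m_*^0$-continuity set for Lebesgue-almost-every $\vec y\in(0,\infty)^m$; the extension to every $\vec y$ is then achieved by sandwiching $A_{\vec y}$ between the sets obtained by replacing each $y_i$ with $y_i\pm\delta$, passing to the limit along generic $\delta$, and then letting $\delta\downarrow0$, using the manifest continuity of the explicit right-hand side in $\vec y$.
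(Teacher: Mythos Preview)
Your proposal is correct and follows essentially the same route as the paper: translate the order-statistic event into $\{N_n^0\in B\}$ for the set $B=\bigcap_{i=1}^m\{\xi:\xi([-1,1]^d\times(y_i,\infty))\ge i\}$, verify that $B$ is bounded away from $\O$ and is an $m_*^0$-continuity set, apply the Portmanteau theorem together with \cref{thm:main:proc:diss}, and then compute $m_*^0(B)$ explicitly. Two minor remarks. First, the paper derives the hitting-time formula simply by rewriting $\bbP(\tau_n^a\le\lambda n)=\bbP(\max_{\|t\|_\infty\le\lfloor\lambda n\rfloor}X_t>a\gamma_n)$ and invoking the first statement with $m=1$ on the box of side $\lfloor\lambda n\rfloor$; this avoids your separate Portmanteau argument for $A_\lambda$ and the case distinction $\lambda>1$. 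Second, your sandwiching step is unnecessary: since $\nu_\alpha$ is atomless, for \emph{every} $\vec y\in(0,\infty)^m$ the set $\bigcup_{u}\{(x,v):xf(v,u)=y_i\}$ has $(\nu_\alpha\otimes\nu)$-measure zero, so $m_*^0(\partial A_{\vec y})=0$ holds directly for all $\vec y$, not just Lebesgue-almost-every $\vec y$.
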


\begin{proof}[\textbf{Proof}]
%Then
%\begin{eqnarray*}
   %     \lefteqn{m^0(B(y_1,\ldots,y_m))}\\
      %  &&=2^d\nu_{\alpha}\otimes\nu\bigg(\Big\{(x,v)\in([-\infty,\infty]\backslash\{0\})\times W:\\
         % &&\hspace{0.6in}\sum_{u\in\bbz^d}\delta_{xf(v,u)}(y_1,\infty)\geq 1,\ldots,\sum_{u\in\bbz^d}\delta_{xf(v,u)}(y_m,\infty)\geq m
           % \Big\}\bigg) \\
       % &&=2^d\nu_{\alpha}\otimes\nu\big(\{(x,v)\in\left[-\infty,0\right)\times W:\\
       %  &&\hspace{1in} -xf_1^-(v)>y_1,\ldots,-xf_m^-(v)>y_m \}\big)\\
        %&&\quad +2^d\nu_{\alpha}\otimes\nu\left(\left\{(x,v)\in\left(0,\infty\right]\times W:
           % xf_1^+(v)>y_1,\ldots,xf_m^+(v)>y_m \right\}\right)\\
       % &&=2^d\nu_{\alpha}\otimes\nu\left(\left\{(x,v)\in \left[-\infty,0\right)\times W:
          %  -x>\min_{i=1,\ldots,m}\{f_i^-(v)^{-1}y_i\}\right\}\right)\\
       % &&\quad+2^d\nu_{\alpha}\otimes\nu\left(\left\{(x,v)\in \left(0,\infty\right]\times W:
          %  x>\min_{i=1,\ldots,m}\{f_i^+(v)^{-1}y_i\}\right\}\right)\\
       % &&=2^d\int_W\left(\min_{i=1,\ldots,m}(f_i^-(v)y_i^{-1})^{\alpha}+\min_{i=1,\ldots,m}(f_i^+(v)y_i^{-1})^{\alpha}\right)
          %          \nu({\rm d}v).
%\end{eqnarray*}
Following the proof of Corollary~5.1 in \cite{hult:samorodnitsky:2010}, we can show that the set
$$B(y_1,y_2, \ldots,y_m):=\bigcap_{i=1}^m\left\{\xi\in \mathbb{M}_p^0:\xi([-1,1]^d\times (y_i,\infty))\geq i\right\}$$ is bounded away from the null measure and its boundary is an $m^0_\ast$-null set. Therefore by applying \cref{thm:main:proc:diss} with $q=0$ and Portmanteau-Theorem (Theorem 2.4 in \cite{hult:lindskog:2006}), we obtain
\begin{eqnarray*}
        \lim_{n\to\infty}\frac{\gamma_n^{\alpha}}{n^d}
            \bbP(X_{1:n}>\gamma_ny_1,\ldots,X_{m:n}>\gamma_ny_m)
        &=&\lim_{n\to\infty}m_n^0(B(y_1,\ldots,y_m))\\
        &=&m_\ast^0(B(y_1,\ldots,y_m)),
\end{eqnarray*}
which can be shown to be equal to the first limit above by an easy calculation.

%\begin{eqnarray*}
   % &&m(\partial B(y_1,\ldots,y_m))\\
    %&&\quad\leq m^0\left(\bigcap_{i=1}^m
      %  \left\{\xi:\xi([-1,1]^d\times \left[y_i,\infty\right))\geq %i\right\}\right)-m^0(B(y_1,\ldots,y_m))=0.
%\end{eqnarray*}

The second statement follows trivially from the first one using the observation that
\begin{eqnarray*}
    \frac{\gamma_n^{\alpha}}{n^d}
            \bbP(\tau^a_n\leq \lambda n)&=&
    \frac{\gamma_n^{\alpha}}{n^d}
            \bbP\left(\sup_{t\in[-\lfloor n\lambda \rfloor\mathbf{1}_d,\lfloor n\lambda \rfloor\mathbf{1}_d]} X_t>a\gamma_n\right)
\end{eqnarray*}
for all $n \geq 1$ and $a>0$.
\end{proof}

\subsection{Proof of Theorem~\ref{thm:main:proc:diss}}
We shall first discuss a brief sketch of the proof of Theorem~\ref{thm:main:proc:diss}. Fix Lipschitz functions $g_1,\,g_2 \in C^+_K(\mathbb{E}^q)$ and  $\epsilon_1,\,\epsilon_2>0$.  By Theorem~A.2 of \cite{hult:samorodnitsky:2010}, in order to prove \eqref{conv:m_n}, it is
enough to show that $m^q_\ast \in \mathbf{M}_0(\mathbb{M}_p^q)$ and
\begin{equation}
\lim_{n \to \infty} m^q_n(F_{g_1,g_2,\epsilon_1,\epsilon_2}) = m^q_\ast(F_{g_1,g_2,\epsilon_1,\epsilon_2}) \label{suff:condn:conv:mq_n}
\end{equation}
with $F_{g_1,g_2,\epsilon_1,\epsilon_2}$ as in \eqref{eq:F}.
Following the heuristics in \cite{resnick:samorodnitsky:2004}, one expects that under the normalization $\gamma_n^{-1}$, all the Poisson points in \eqref{repn_Possion_integral_X_n} except perhaps one will be killed and therefore the large deviation behavior of $N^q_n$ should be the same as that of
$$
\widehat{N}^q_n:=\sum_{i=1}^\infty \sum_{\|t\|_\infty \leq n} \delta_{(n^{-1}t,\,\gamma_n^{-1}\psi(j_i, v_i, u_i-t))}.
$$ Keeping this in mind, we define
\begin{eqnarray} \label{eq:mhat}
    \widehat{m}^q_n(\cdot):=\frac{\gamma_n^\alpha}{n^d}\bbP(\widehat{N}^q_n \in \cdot)
\end{eqnarray}
and hope to establish
\begin{equation}
\lim_{n \to \infty} \widehat{m}^q_n(F_{g_1,g_2,\epsilon_1,\epsilon_2}) = m^q_\ast(F_{g_1,g_2,\epsilon_1,\epsilon_2}).  \label{conv:hatm_n}\,
\end{equation}
as the first step of proving \eqref{suff:condn:conv:mq_n}.

%We introduce a few more notations in order to simplify the presentation of the proof.
For $p=1,2$ and for all $i\in\mathbb{N}$, let
\begin{align}
Z_{p,i}:=\sum_{\|t\|_\infty \leq n} g_p(n^{-1}t, \gamma_n^{-1}\psi(j_i, v_i, u_i-t)), \label{defn:Z_p,i}
\end{align}
where $\psi$ is as in \eqref{defn:of:psi}. For all $q \geq 0$ and $n \geq 1$, define
\begin{eqnarray} \label{eq:mtilde32}
\widetilde{m}^q_n(F_{g_1,g_2,\epsilon_1,\epsilon_2}):=\frac{\gamma_n^\alpha}{n^d}\bbE\Big[\sum_{i=1}^\infty \big(1-\e^{-(Z_{1,i}-\epsilon_1)_+}\big)\big(1-\e^{-(Z_{2,i}-\epsilon_1)_+}\big)\Big].%\nonumber
\end{eqnarray}
In order to establish \eqref{conv:hatm_n}, we shall first show that the quantities $\widehat{m}^q_n(F_{g_1,g_2,\epsilon_1,\epsilon_2})$ and $\widetilde{m}^q_n(F_{g_1,g_2,\epsilon_1,\epsilon_2})$ are asymptotically equal, and then prove
\begin{equation}
\lim_{n \to \infty} \widetilde{m}^q_n(F_{g_1,g_2,\epsilon_1,\epsilon_2}) = m^q_\ast(F_{g_1,g_2,\epsilon_1,\epsilon_2}).  \nonumber
\end{equation}

The execution and justification of these steps are detailed below with the help of a series of lemmas. Among these, Lemma~\ref{lemma:diff:of:mhat:and:mtilde} is the key step that makes our proof amenable to the techniques used in \cite{resnick:samorodnitsky:2004}. The rest of the lemmas can be established by closely following the proof of Theorem~3.1 in the aforementioned paper and improving it whenever necessary. Most of these improvements are nontrivial albeit somewhat expected.

The first step in establishing the HLS convergence \eqref{conv:m_n} is to check that the limit measure $m^q_\ast$ is indeed an element $\mathbf{M}_0(\mathbb{M}_p^q)$.

\begin{lemma} \label{lemma:diss:1}
    For all $q \geq 0$, $m^q_\ast \in \mathbf{M}_0(\mathbb{M}_p^q).$
\end{lemma}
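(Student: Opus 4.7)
The plan is to verify the two defining properties of $\mathbf{M}_0(\mathbb{M}_p^q)$: that $m^q_\ast$ is a Borel measure on $\mathbb{M}_p^q$ and that it assigns finite mass to $\mathbb{M}_p^q\setminus B(\O,\varepsilon)$ for every $\varepsilon>0$. First, I would observe that $m^q_\ast$ is, by definition, the push-forward of the $\sigma$-finite product measure $\Leb|_{[-1,1]^d}\otimes\nu_\alpha\otimes\nu$ under the map
\begin{equation*}
\Phi(t,x,v) := \sum_{u\in\mathbb{Z}^d}\delta_{(t,\,\psi(x,v,u))}.
\end{equation*}
Since the Borel $\sigma$-algebra on $\mathbb{M}_p^q$ is generated by the evaluation maps $\xi\mapsto\xi(h)$ with $h\in C_K^+(\mathbb{E}^q)$, measurability of $\Phi$ reduces to the measurability of $(t,x,v)\mapsto\sum_{u\in\mathbb{Z}^d} h(t,\psi(x,v,u))$, which follows immediately as a countable sum of measurable functions.

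The core of the argument is the finiteness claim. Describing the vague metric through a countable dense family of Lipschitz functions in $C_K^+(\mathbb{E}^q)$ shows that $\mathbb{M}_p^q\setminus B(\O,\varepsilon)$ is contained in $\{\xi:\xi(K_\varepsilon)\geq 1\}$ for some compact $K_\varepsilon\subset\mathbb{E}^q$. Since the ``infinity'' point removed from $\mathbb{E}^q$ is $0^{[-q\mathbf{1}_d,q\mathbf{1}_d]}$, any such $K_\varepsilon$ sits inside $[-1,1]^d\times\{y:\max_w|y_w|\geq\delta\}$ for some $\delta=\delta(\varepsilon)>0$. Then Markov's inequality and Fubini give
\begin{align*}
m^q_\ast\bigl(\mathbb{M}_p^q\setminus B(\O,\varepsilon)\bigr)
&\leq \int_{[-1,1]^d}\!dt\int\nu_\alpha(dx)\int_W\nu(dv)\sum_{u\in\mathbb{Z}^d}\mathbf{1}_{K_\varepsilon}(t,\psi(x,v,u))\\
&\leq 2^d\sum_{u\in\mathbb{Z}^d}\int_W\nu_\alpha\bigl\{x:|x|\max_w|f(v,u-w)|\geq\delta\bigr\}\,\nu(dv)\\
&= 2^{d+1}\delta^{-\alpha}\sum_{u\in\mathbb{Z}^d}\int_W\max_w|f(v,u-w)|^\alpha\,\nu(dv)\\
&\leq 2^{d+1}\delta^{-\alpha}(2q+1)^d\|f\|_\alpha^\alpha<\infty,
\end{align*}
using $\nu_\alpha\{|x|\geq a\}=2a^{-\alpha}$ from \eqref{defn:nu_alpha}, the bound $\max_w|a_w|^\alpha\leq\sum_w|a_w|^\alpha$, and a shift in the $u$-summation.

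The only step that is not pure calculation is the topological reduction $\mathbb{M}_p^q\setminus B(\O,\varepsilon)\subset\{\xi:\xi(K_\varepsilon)\geq 1\}$; this is a standard feature of the vague topology on Radon point measures and I would invoke it as a known fact rather than reprove it. Once this reduction is in place, everything else is driven by the integrability $f\in L^\alpha(W\times\mathbb{Z}^d,\nu\otimes\zeta)$ and the elementary tail formula for $\nu_\alpha$.
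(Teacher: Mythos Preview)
Your argument is correct and arrives at the same core estimate as the paper (their display (3.20)), but the two proofs organize the work differently. For the finiteness of $m^q_\ast(\mathbb{M}_p^q\setminus B(\O,\varepsilon))$, the paper does not use your topological reduction $\mathbb{M}_p^q\setminus B(\O,\varepsilon)\subset\{\xi:\xi(K_\varepsilon)\geq 1\}$; instead it invokes Theorem~A.2 of \cite{hult:samorodnitsky:2010} to reduce the claim to showing $m^q_\ast(F_{g_1,g_2,\epsilon_1,\epsilon_2})<\infty$ for all $g_1,g_2\in C_K^+(\mathbb{E}^q)$ and $\epsilon_1,\epsilon_2>0$, and then bounds this via $1-\e^{-(x-\epsilon)_+}\leq x$ and the same integral computation. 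Your route is more self-contained; the paper's route has the advantage of fitting the machinery already set up for the rest of the proof of Theorem~\ref{thm:main:proc:diss}, where the functions $F_{g_1,g_2,\epsilon_1,\epsilon_2}$ are the basic currency.

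One point you skip that the paper treats explicitly: before discussing measurability of $\Phi$, you need $\Phi(t,x,v)$ to actually lie in $\mathbb{M}_p^q$, i.e.\ to be a \emph{Radon} point measure on $\mathbb{E}^q$, for almost every $(t,x,v)$. The paper isolates this as \eqref{first:step:comp:of:m_ast} and deduces it from the very estimate you compute (finiteness of the integral forces the inner sum to be finite a.e., so each $\Phi(t,x,v)$ places only finitely many atoms in $[-1,1]^d\times A_\eta$ for every $\eta>0$). You should state this step; otherwise the push-forward is not a priori a measure on $\mathbb{M}_p^q$.
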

\begin{proof}[\textbf{Proof}]
The statement $m^q_\ast \in \mathbf{M}_0(\mathbb{M}_p^q)$ means that
$m^q_\ast$ is a Borel measure on $\mathbb{M}_p^q$ with $m^q_\ast(\mathbb{M}_p^q\backslash B({\O},\epsilon))<\infty$ for
any $\epsilon>0$.
To prove this, we first claim that for almost all $(t, x, v) \in  [-1,1]^d \times ([-\infty,\infty]\setminus \{0\}) \times W$,
\begin{equation}
\sum_{u \in \mathbb{Z}^d}\delta_{\left(t,\,\psi(x,v,u)\right)}  \in \mathbb{M}_p^q, \label{first:step:comp:of:m_ast}
\end{equation}
concluding $m^q_\ast$ is a Borel measure on $\mathbb{M}_p^q$. To this end, setting
\begin{equation} %\label{A_eta}
A_\eta:=[-\infty,\infty]^{[-q\mathbf{1}_d, q\mathbf{1}_d]}\setminus(-\eta,\eta)^{[-q\mathbf{1}_d, q\mathbf{1}_d]} \label{defn:A_eta}
\end{equation}
for all $\eta>0$, and $\|f\|_\alpha:=\left(\int_W \sum_{u\in\mathbb{Z}^d}|f(v,u)|^\alpha\nu(dv)\right)^{1/\alpha}$, we get
\begin{align}
&\int_{[-1,1]^d} \int_{|x|>0} \int_W \sum_{u \in \mathbb{Z}^d}\delta_{\left(t,\,\psi(x,v,u)\right)}\left([-1,1]^d \times A_\eta\right) \nu(dv) \nu_\alpha(dx) dt \nonumber\\
%&& = 2^d \int_W \sum_{u\in\mathbb{Z}^d}\int_{\bigcup\limits_{\|w\|_\infty \leq q}\,\{|x|>\eta/|f(v,u-w)|\}} \nu_\alpha(dx) \nu(dv) \\
&\quad \leq 2^{d+1}\eta^{-\alpha} (2q+1)^d\, \|f\|_\alpha^\alpha <\infty. \label{eq.3.20}
%&&\,\leq  2^d \int_W\sum_{u\in\mathbb{Z}^d} \int_{|x|>\eta/\left(\sum_{\|w\|_\infty \leq q} |f(v,u-w)|^\alpha\right)^{1/\alpha}} \nu_\alpha(dx) \nu(dv)  dt \\
%&&\,\leq 2^{d+1}\eta^{-\alpha}  \int_W \sum_{\|w\|_\infty \leq q}\sum_{u\in\mathbb{Z}^d}|f(v,u-w)|^\alpha\nu(dv)  \\
\end{align}
Applying the method used to establish that the limit measure in Theorem~3.1 of \cite{resnick:samorodnitsky:2004}  (p.196) is Radon,
\eqref{first:step:comp:of:m_ast} follows from \eqref{eq.3.20}.

Because of the estimates used in the proof of Theorem~A.2 in \linebreak \cite{hult:samorodnitsky:2010}, to obtain  $m^q_\ast(\mathbb{M}_p^q\backslash B({\O},\epsilon))<\infty$ for all $\epsilon > 0$, it is enough to show that
$
m^q_\ast(F_{g_1,g_2,\epsilon_1,\epsilon_2}) < \infty
$
for all $g_1,\,g_2 \in C^+_K(\mathbb{E}^q)$ and for all $\epsilon_1,\,\epsilon_2>0$.
Using \eqref{first:step:comp:of:m_ast} and a change of measure, we get
\begin{align}
m^q_\ast(F_{g_1,g_2,\epsilon_1, \epsilon_2})%\nonumber\\
&=\int_{[-1,1]^d} \int_{|x|>0} \int_W \Big\{\left(1-\e^{-(\sum_{u\in\mathbb{Z}^d}\,g_1(t,\psi(x,v,u))-\epsilon_1)_{+}}\right) \nonumber\\
&\hspace{0.25in}\times \left(1-\e^{-(\sum_{u\in\mathbb{Z}^d}\,g_2(t,\psi(x,v,u))-\epsilon_2)_{+}}\right)\Big\}\nu(dv) \nu_\alpha(dx) dt.\nonumber
\end{align}
Let $C$ be an upper bound for $|g_1|$ and $|g_2|$, and $\eta > 0$ be  such that $g_1(t,y)=g_2(t,y)=0$ for all $y \in (-\eta,\eta)^{[-q\mathbf{1}_d, q\mathbf{1}_d]}$. Then \eqref{eq.3.20} and the inequality $1-\e^{-(x-\epsilon)_{+}} \leq x$ (for $x \geq 0$ and $\epsilon >0$) yield that $m^q_\ast(F_{g_1,g_2,\epsilon_1,\epsilon_2})$ can be bounded by $2^{d+1}C \eta^{-\alpha}(2q+1)^d\|f\|_\alpha^\alpha $. This shows $m^q_\ast(\mathbb{M}_p^q\backslash B({\O},\epsilon))<\infty$.
\end{proof}

To proceed with the proof of \cref{thm:main:proc:diss} by using the ideas mentioned above, we need the following most crucial lemma.

\begin{lemma} \label{lemma:diff:of:mhat:and:mtilde}
Let $\widehat{m}^q_n(F_{g_1,g_2,\epsilon_1,\epsilon_2})$ and $\widetilde{m}^q_n(F_{g_1,g_2,\epsilon_1,\epsilon_2})$ be as in \eqref{eq:mhat} and \eqref{eq:mtilde32}, respectively. Then for all $q \geq 0$,
\[
\lim_{n\to\infty}|\widehat{m}^q_n(F_{g_1,g_2,\epsilon_1,\epsilon_2})-\widetilde{m}^q_n(F_{g_1,g_2,\epsilon_1,\epsilon_2})|=0.
\]
\end{lemma}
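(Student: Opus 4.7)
The plan is to exploit the ``single big jump'' intuition: under the normalization $\gamma_n^{-1}$ with $n^{d/\alpha}/\gamma_n\to 0$, it is unusual for more than one Poisson atom in \eqref{PRM:underlying} to contribute to the sums $\widehat{N}^q_n(g_p)=\sum_i Z_{p,i}$, and whenever at most one contributes, the quantities $T$ and $U$ appearing inside $\widehat{m}^q_n(F_{g_1,g_2,\epsilon_1,\epsilon_2})$ and $\widetilde{m}^q_n(F_{g_1,g_2,\epsilon_1,\epsilon_2})$ agree exactly. Writing $\varphi_p(x):=1-\e^{-(x-\epsilon_p)_+}$, we have $T=\varphi_1(\sum_iZ_{1,i})\varphi_2(\sum_iZ_{2,i})$ and $U=\sum_i\Psi_i$ with $\Psi_i:=\varphi_1(Z_{1,i})\varphi_2(Z_{2,i})$. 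Letting $I:=\{i:Z_{1,i}>0\text{ or }Z_{2,i}>0\}$, the identity $\varphi_p(0)=0$ yields the pointwise equality $T=U$ on $\{|I|\le 1\}$: when $|I|=0$ both vanish, while when $I=\{i^*\}$ we have $\sum_iZ_{p,i}=Z_{p,i^*}$ and every remaining summand of $U$ contains the factor $\varphi_p(0)=0$. Combined with $0\le T\le 1$, this produces the pointwise bound
\[
|T-U|\le(1+U)\,\1\{|I|\ge 2\},
\]
reducing the lemma to showing $(\gamma_n^\alpha/n^d)\,\bbE[(1+U)\1\{|I|\ge 2\}]\to 0$.

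Next I would use that $\{(j_i,v_i,u_i)\}$ is a Poisson process with intensity $\nu_\alpha\otimes\nu\otimes\zeta$, together with the fact that both $\1\{i\in I\}$ and $\Psi_i$ are measurable functions solely of the $i$-th atom. The second factorial moment identity for Poisson processes (equivalently the Slivnyak--Mecke formula) then yields
\[
\bbP(|I|\ge 2)\le\tfrac12(\bbE|I|)^2,\qquad \bbE\big[U\,\1\{|I|\ge 2\}\big]\le\bbE\Big[\sum_{i\neq i'}\Psi_i\,\1\{i'\in I\}\Big]=\bbE[U]\,\bbE|I|\le(\bbE|I|)^2,
\]
where the last inequality uses $\Psi_i\le\1\{i\in I\}$. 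Thus it suffices to verify $\bbE|I|=O(n^d\gamma_n^{-\alpha})$.

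Since $g_p\in C^+_K(\mathbb{E}^q)$ is supported away from $\{0\}^{[-q\mathbf{1}_d,q\mathbf{1}_d]}$, there exists $\eta>0$ with $g_p(t,y)=0$ whenever $y\in(-\eta,\eta)^{[-q\mathbf{1}_d,q\mathbf{1}_d]}$. Hence $i\in I$ forces $|j_if(v_i,u_i-t-w)|>\eta\gamma_n$ for some $\|t\|_\infty\le n$ and $w\in[-q\mathbf{1}_d,q\mathbf{1}_d]$. Setting $R_n:=[-(n+q),n+q]^d\cap\bbz^d$, Campbell's formula, the union bound over $u\in s+R_n$ for $s\in\bbz^d$ satisfying $|xf(v,s)|>\eta\gamma_n$, Fubini, and $\nu_\alpha\{|x|>M\}=2M^{-\alpha}$ combine to give
\[
\bbE|I|\le|R_n|\int_W\sum_{s\in\bbz^d}\nu_\alpha\Big\{|x|>\tfrac{\eta\gamma_n}{|f(v,s)|}\Big\}\nu(dv)=\frac{2\,|R_n|\,\|f\|_\alpha^\alpha}{(\eta\gamma_n)^\alpha}=O\!\left(\frac{n^d}{\gamma_n^\alpha}\right).
\]
Chained with the factorial moment bounds, this yields $(\gamma_n^\alpha/n^d)\bbE|T-U|=O(n^d/\gamma_n^\alpha)\to 0$ by \eqref{gamma}.

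The main technical subtlety lies in the last estimate on $\bbE|I|$: the shift $u\mapsto u-t-w$ ranging over the box $R_n$ of cardinality $O(n^d)$ threatens to absorb the $\gamma_n^{-\alpha}$ decay coming from $\nu_\alpha$; what rescues the argument is that $f\in L^\alpha(W\times\bbz^d,\nu\otimes\zeta)$, which keeps the $\bbz^d$-sum over $s$ finite and independent of $n$, leaving $|R_n|$ to contribute only the expected polynomial factor. Together with the factorial moment bound, this explains why the hypothesis $n^{d/\alpha}/\gamma_n\to 0$ is exactly the right threshold making $(\gamma_n^\alpha/n^d)(\bbE|I|)^2$ vanish.
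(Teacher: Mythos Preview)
Your argument is correct and proceeds along essentially the same lines as the paper's: both isolate the ``single big jump'' event (your $\{|I|\le 1\}$, the paper's $B_n$) on which $T$ and $U$ coincide, bound the complementary event via the Poisson second factorial moment, and verify $\bbE|I|=O(n^d\gamma_n^{-\alpha})$ using $f\in L^\alpha$. The only noteworthy difference is in how the cross term $\bbE[U\,\1\{|I|\ge 2\}]$ is controlled: the paper applies Cauchy--Schwarz to obtain $\sqrt{\bbP(B_n^c)\,\bbE[(\sum_i\varphi_1(Z_{1,i}))^2]}$ and then bounds the second moment separately via the Poisson variance formula, whereas your direct Slivnyak--Mecke bound $\bbE\big[\sum_{i\ne i'}\Psi_i\1\{i'\in I\}\big]=\bbE[U]\,\bbE|I|\le(\bbE|I|)^2$ is more economical and sidesteps that extra computation. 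Both routes deliver the same $(\gamma_n^\alpha/n^d)(\bbE|I|)^2=O(n^d/\gamma_n^\alpha)\to 0$ conclusion.
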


\begin{proof}[\textbf{Proof}] Let $C, \eta >0$ be as above and $A_\eta$ be defined by \eqref{defn:A_eta}. For $n \geq 1$, let $B_n$ be the event that for at most one $i$,
$
\sum_{\|t\|_\infty \leq n} \delta_{\gamma_n^{-1}\psi(j_i, v_i, u_i-t)}(A_\eta) \geq 1,
$
where $\psi$ is as in \eqref{defn:of:psi}. We claim that
\begin{equation}
\frac{\gamma_n^\alpha}{n^d} \bbP(B_n^c) \to 0 \label{bound:on:Prob:of:B_n:compliment}
\end{equation}
as $n \to \infty$. To prove this claim, observe that on $B_n^c$, there exist more than one $i$ such that $|j_i| \geq \eta\gamma_n/|f(v_i,u_i-t-w)|$ for some $(t,w) \in [-n\mathbf{1}_d, n\mathbf{1}_d] \times [-q\mathbf{1}_d, q\mathbf{1}_d]$ and therefore because of \eqref{PRM:underlying}, the sequence in \eqref{bound:on:Prob:of:B_n:compliment} can be bounded by
\[
\frac{\gamma_n^\alpha}{n^d} \bbP\bigg(\sum_{i=1}^\infty \delta_{(j_i,v_i,u_i)}(L_n) \geq 2\bigg) \leq \frac{\gamma_n^\alpha}{n^d} \bigg(\bbE\Big(\sum_{i=1}^\infty \delta_{(j_i,v_i,u_i)}(L_n)\Big)\bigg)^2 = O\left({n^{d}}/{\gamma_n^{\alpha}}\right),
\]
where
$
L_n:=\left\{(x,v,u): |x| \geq \eta \gamma_n\big(\sum_{\|t\|_\infty \leq n} \sum_{\|w\|_\infty \leq q}|f(v,u-t-w)|^\alpha\big)^{-\frac{1}{\alpha}}\right\}
 $. %From the above bound, \eqref{bound:on:Prob:of:B_n:compliment} follows because \eqref{PRM:underlying} yields
%$$
%\bbE\left(\sum_{i=1}^\infty \delta_{(j_i,v_i,u_i)}(L_n)\right)=2\eta^{-\alpha} (2q+1)^d \|f\|^\alpha_\alpha (2n+1)^d\gamma_n^{-\alpha}.
%$$

It is easy to check that with $Z_{1,i}$ and $Z_{2,i}$ as in \eqref{defn:Z_p,i},  \linebreak
$\widehat{m}^q_n(F_{g_1,g_2,\epsilon_1,\epsilon_2}) =\frac{\gamma_n^\alpha}{n^d} \bbE \Big[\big(1-\e^{-(\sum_{i=1}^\infty Z_{1,i}-\epsilon_1)_+}\big) \big(1-\e^{-(\sum_{i=1}^\infty Z_{2,i}-\epsilon_2)_+}\big)\Big]$. Since on the event $B_n$, the random variables $\big(1-\e^{-(\sum_{i=1}^\infty Z_{1,i}-\epsilon_1)_+}\big) \big(1-\e^{-(\sum_{i=1}^\infty Z_{2,i}-\epsilon_2)_+}\big)$ and $\sum_{i=1}^\infty \big(1-\e^{-(Z_{1,i}-\epsilon_1)_+}\big)\big(1-\e^{-(Z_{2,i}-\epsilon_1)_+}\big)$ are equal, it transpires that
\begin{eqnarray*}
\lefteqn{|\widehat{m}^q_n(F_{g_1,g_2,\epsilon_1,\epsilon_2})-\widetilde{m}^q_n(F_{g_1,g_2,\epsilon_1,\epsilon_2})|} \nonumber\\
& \leq& \frac{\gamma_n^\alpha}{n^d} \bbP(B_n^c) + \frac{\gamma_n^\alpha}{n^d} \bbE \Big[\1_{B_n^c}\sum_{i=1}^\infty \big(1-\e^{-(Z_{1,i}-\epsilon_1)_+}\big) \big(1-\e^{-(Z_{2,i}-\epsilon_2)_+}\big)\Big]\nonumber\\
%&\leq &\frac{\gamma_n^\alpha}{n^d} \bbP(B_n^c) + \frac{\gamma_n^\alpha}{n^d} \bbE \Big[\1_{B_n^c}\,\sum_{i=1}^\infty \big(1-\e^{-(Z_{1,i}-\epsilon_1)_+}\big) \Big],\nonumber\\
&\leq& \frac{\gamma_n^\alpha}{n^d} \bbP(B_n^c) + \sqrt{\frac{\gamma_n^\alpha}{n^d}\bbP(B_n^c)\,\frac{\gamma_n^\alpha}{n^d}\bbE\,\Big(\sum_{i=1}^\infty \big(1-\e^{-(Z_{1,i}-\epsilon_1)_+}\big) \Big)^2}\;,\nonumber
\end{eqnarray*}
which, combined with \eqref{bound:on:Prob:of:B_n:compliment}, yields Lemma~\ref{lemma:diff:of:mhat:and:mtilde} provided we show that
\begin{equation}
\frac{\gamma_n^\alpha}{n^d}\bbE\,\Big(\sum_{i=1}^\infty \big(1-\e^{-(Z_{1,i}-\epsilon_1)_+}\big) \Big)^2=O(1). \label{bigOone}
\end{equation}

To this end, note that applying \eqref{PRM:underlying}, Lemma 9.5IV in \cite{Daley:Vere-JonesII},
%\begin{eqnarray*}
 %   \lefteqn{(1-\e^{-(\sum_{\|t\|_\infty \leq n} \, g_1(n^{-1}t, \gamma_n^{-1}\psi(j_i, v_i, u_i-t))-\epsilon_1)_+}\big)^2}\\
   % &&\leq(1-\e^{-(\sum_{\|t\|_\infty \leq n} \, g_1(n^{-1}t, \gamma_n^{-1}\psi(j_i, v_i, u_i-t))-\epsilon_1)_+}\big)
%\end{eqnarray*}
 and the inequality $1 - \e^{-x} \leq x$ for $x \geq 0$, we obtain
\begin{align*}
&\bbE\,\Big(\sum_{i=1}^\infty \big(1-\e^{-(Z_{1,i}-\epsilon_1)_+}\big) \Big)^2\nonumber\\
%&= \bbE\,\Big(\sum_{i=1}^\infty \big(1-\e^{-(\sum_{\|t\|_\infty \leq n} \, g_1(n^{-1}t, \gamma_n^{-1}\psi(j_i, v_i, u_i-t))-\epsilon_1)_+}\big) \Big)^2,\nonumber\\
%\intertext{where $\psi$ is as in \eqref{defn:of:psi}. Because of \eqref{PRM:underlying}, the above quantity equals}
&=  \int_{|x|>0} \int_W\sum_{u \in \mathbb{Z}^d}\big(1-\e^{-(\sum_{\|t\|_\infty \leq n} \,g_1(n^{-1}t, \gamma_n^{-1}\psi(x, v, u-t))-\epsilon_1)_+}\big)^2 \nu(dv) \nu_\alpha(dx)\nonumber\\
& \;\;\;\;+ \bigg(\int_{|x|>0} \int_W\sum_{u \in \mathbb{Z}^d}\big(1-\e^{-(\sum_{\|t\|_\infty \leq n} g_1(n^{-1}t, \gamma_n^{-1}\psi(x, v, u-t))-\epsilon_1)_+} \big)\\
&\hspace{3.7in}\nu(dv) \nu_\alpha(dx)\bigg)^2 \nonumber\\
&\leq \int_{|x|>0} \int_W \sum_{u \in \mathbb{Z}^d}  \sum_{\|t\|_\infty \leq n}\,g_1\big(n^{-1}t, \gamma_n^{-1}\psi(x, v, u-t)\big) \nu(dv) \nu_\alpha(dx) \nonumber\\
&\;\;\;\;+ \bigg(\int_{|x|>0} \int_W \sum_{u \in \mathbb{Z}^d} \sum_{\|t\|_\infty \leq n}\,g_1\big(n^{-1}t, \gamma_n^{-1}\psi(x, v, u-t)\big) \nu(dv) \nu_\alpha(dx)\bigg)^2,\nonumber
\end{align*}
from which \eqref{bigOone} follows because by similar calculations as in \eqref{eq.3.20}, the first term of above is bounded by $2 C (\eta \gamma_n)^{-\alpha}(2q+1)^d \|f\|_\alpha^\alpha (2n+1)^d$ for all $n \geq 1$ and $q \geq 0$ and for the second term we additionally use \eqref{gamma}. This finishes the proof of this lemma.
\end{proof}
%&\hspace{0.1in}\frac{\gamma_n^\alpha}{n^d}\int_{|x|>0} \int_W \sum_{u \in \mathbb{Z}^d}  \sum_{\|t\|_\infty \leq n}\,g_1\big(n^{-1}t, \gamma_n^{-1}\psi(x, v, u-t)\big) \nu(dv) \nu_\alpha(dx) \nonumber\\
%&\hspace{0.3in}=\frac{1}{n^d}\int_{|x|>0} \int_W \sum_{u \in \mathbb{Z}^d}  \sum_{\|t\|_\infty \leq n}\,g_1\big(n^{-1}t, \psi(x, v, u-t)\big) \nu(dv) \nu_\alpha(dx) \nonumber\\
%&\hspace{0.3in}\leq 2 C \eta^{-\alpha}(2q+1)^d \|f\|_\alpha^\alpha \, \bigg(\frac{2n+1}{n}\bigg)^d \nonumber

We shall now establish \eqref{conv:hatm_n}. In light of Lemma~\ref{lemma:diff:of:mhat:and:mtilde}, it is enough to prove the next lemma.
\begin{lemma}
 For all $q \geq 0$,
\begin{equation}
\lim_{n\to\infty}\widetilde{m}^q_n(F_{g_1,g_2,\epsilon_1,\epsilon_2}) = m^q_\ast(F_{g_1,g_2,\epsilon_1,\epsilon_2}).  \label{conv:tildem_n}
\end{equation}
\end{lemma}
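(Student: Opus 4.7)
The plan is to reduce $\widetilde{m}^q_n(F_{g_1,g_2,\epsilon_1,\epsilon_2})$ to an explicit deterministic integral via Campbell's formula, rescale to eliminate $\gamma_n$, and then pass to a Riemann-sum limit pointwise in the outer integration variables before invoking dominated convergence. Since $\{(j_i,v_i,u_i)\}$ is a Poisson random measure with mean $\nu_\alpha\otimes\nu\otimes\zeta$, Campbell's formula (Lemma~9.5IV in \cite{Daley:Vere-JonesII}, already used in \cref{lemma:diff:of:mhat:and:mtilde}) gives
\begin{equation*}
\widetilde{m}^q_n(F_{g_1,g_2,\epsilon_1,\epsilon_2}) = \frac{\gamma_n^\alpha}{n^d}\int_{|x|>0}\int_W\sum_{u\in\mathbb{Z}^d}\prod_{p=1}^2\bigl(1-\e^{-(Z_p(x,v,u)-\epsilon_p)_+}\bigr)\,\nu(dv)\,\nu_\alpha(dx),
\end{equation*}
where $Z_p(x,v,u):=\sum_{\|t\|_\infty\leq n} g_p(n^{-1}t,\gamma_n^{-1}\psi(x,v,u-t))$. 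Substituting $x=\gamma_n y$ and using both the $\alpha$-homogeneity $\nu_\alpha(\gamma_n\,dy)=\gamma_n^{-\alpha}\nu_\alpha(dy)$ and the linearity $\gamma_n^{-1}\psi(\gamma_n y,v,u)=\psi(y,v,u)$, all $\gamma_n$-factors cancel, leaving the same expression with $1/n^d$ in front and with $\widetilde{Z}_p(y,v,u):=\sum_{\|t\|_\infty\leq n} g_p(n^{-1}t,\psi(y,v,u-t))$ in place of $Z_p$.

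Next, I would carry out a pointwise-in-$(y,v)$ Riemann-sum analysis. Fix $(y,v)$ with $|y|>0$. Substituting $s=u-t$ in the inner sum gives $\widetilde{Z}_p(y,v,u)=\sum_{s:\|u-s\|_\infty\leq n} g_p(n^{-1}(u-s),\psi(y,v,s))$. Choosing $\eta>0$ so that both $g_1$ and $g_2$ vanish on $(-\eta,\eta)^{[-q\mathbf{1}_d,q\mathbf{1}_d]}$, only $s$ in the finite set $S(y,v;\eta):=\{s\in\mathbb{Z}^d:\exists\,w\in[-q\mathbf{1}_d,q\mathbf{1}_d],\,|yf(v,s-w)|\geq\eta\}$ can contribute, finiteness being a consequence of $f(v,\cdot)\in\ell^\alpha(\mathbb{Z}^d)$. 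For $u$ in the interior $\{u:\|u\|_\infty\leq n-\max_{s\in S(y,v;\eta)}\|s\|_\infty\}$, the constraint $\|u-s\|_\infty\leq n$ is automatic and Lipschitz continuity of $g_p$ yields $\widetilde{Z}_p(y,v,u)=\sum_{s\in\mathbb{Z}^d}g_p(n^{-1}u,\psi(y,v,s))+O(n^{-1})$ uniformly in such $u$. Since $F_\ast(\tau;y,v):=\prod_{p=1}^2\bigl(1-\e^{-(\sum_{s\in\mathbb{Z}^d} g_p(\tau,\psi(y,v,s))-\epsilon_p)_+}\bigr)$ is Lipschitz in $\tau\in[-1,1]^d$ and vanishes outside, a standard Riemann-sum argument, together with the observation that the $O(n^{d-1})$ boundary values of $u$ contribute $O(1/n)$ after the $n^{-d}$ normalization, gives
\begin{equation*}
\frac{1}{n^d}\sum_{u\in\mathbb{Z}^d}\prod_{p=1}^2\bigl(1-\e^{-(\widetilde{Z}_p(y,v,u)-\epsilon_p)_+}\bigr) \longrightarrow \int_{[-1,1]^d}F_\ast(\tau;y,v)\,d\tau \quad\text{as } n\to\infty.
\end{equation*}

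For the final step, I would invoke dominated convergence on the $\nu\otimes\nu_\alpha$-integral. Using the elementary bounds $(1-\e^{-(\widetilde{Z}_1-\epsilon_1)_+})(1-\e^{-(\widetilde{Z}_2-\epsilon_2)_+})\leq 1-\e^{-(\widetilde{Z}_1-\epsilon_1)_+}\leq\widetilde{Z}_1$ and interchanging the $u$- and $t$-sums, the integrand is bounded by $n^{-d}\sum_{\|t\|_\infty\leq n}\sum_{s\in S(y,v;\eta_1)}g_1(n^{-1}t,\psi(y,v,s))\leq 3^d\|g_1\|_\infty(2q+1)^d|T(y,v;\eta_1)|$ for all $n\geq 1$, where $T(y,v;\eta_1):=\{z\in\mathbb{Z}^d:|f(v,z)|\geq\eta_1/|y|\}$. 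This majorant is $(\nu\otimes\nu_\alpha)$-integrable, since by Fubini and $\nu_\alpha\{|y|\geq r\}=2r^{-\alpha}$,
\begin{equation*}
\int_{|y|>0}\int_W|T(y,v;\eta_1)|\,\nu(dv)\,\nu_\alpha(dy) = 2\eta_1^{-\alpha}\int_W\sum_{z\in\mathbb{Z}^d}|f(v,z)|^\alpha\,\nu(dv) = 2\eta_1^{-\alpha}\|f\|_\alpha^\alpha < \infty.
\end{equation*}
Dominated convergence then identifies the limit with $\int_{|y|>0}\int_W\int_{[-1,1]^d}F_\ast(\tau;y,v)\,d\tau\,\nu(dv)\,\nu_\alpha(dy)$, which, upon renaming $(\tau,y,s)\to(t,x,u)$, coincides with $m^q_\ast(F_{g_1,g_2,\epsilon_1,\epsilon_2})$. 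The principal technical difficulty will be the careful uniform control of the $O(n^{-1})$ error in the interior Riemann-sum approximation together with the absorption of the boundary terms; the dominated-convergence step itself is then routine once the elementary product bound $h_1h_2\leq h_1$ is exploited to reduce to a single factor that inherits $\alpha$-integrability from $f$.
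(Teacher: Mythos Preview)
Your argument is correct and complete. It does, however, take a genuinely different route from the paper's. The paper proceeds by first truncating $f$ to be supported on $W\times[-T\mathbf{1}_d,T\mathbf{1}_d]$, establishing \eqref{conv:tildem_n} in that compactly supported case (where all the relevant sums are finite and the Riemann-sum passage is trivial), and then removing the truncation via a converging-together argument controlled by the Lipschitz inequalities from \cref{diff:of:m_F:and:mhat_F}. You instead handle general $f$ in one stroke: after Campbell's formula and the $\gamma_n$-rescaling (steps common to both approaches), you prove pointwise-in-$(y,v)$ convergence of the integrand by isolating the finite set $S(y,v;\eta)$, and then pass the limit under the outer integral by dominated convergence with the explicit majorant $|T(y,v;\eta_1)|$, whose $\nu_\alpha\otimes\nu$-integrability you verify directly from $\|f\|_\alpha<\infty$. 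Your route is slightly more elementary and self-contained, avoiding the two-stage approximation; the paper's route has the advantage that the compactly supported case is entirely transparent, and the converging-together machinery is reusable (indeed it is invoked again in the conservative case in Section~\ref{section:conservative}). Both ultimately extract the same content from the integrability of $f$, just organized differently: the paper approximates $f$ itself, you dominate the full integrand.
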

\begin{proof}[\textbf{Proof}]
This can be achieved in a fashion similar to the proof of Theorem~3.1 in \linebreak \cite{resnick:samorodnitsky:2004}, namely, by first proving a version of \eqref{conv:tildem_n} for $f$ supported on $W \times [-T\mathbf{1}_d,T\mathbf{1}_d]$ for some $T \geq 1$, and then using a converging together argument with the help of the inequalities used in the proof of Lemma~\ref{diff:of:m_F:and:mhat_F} below.
\end{proof}
Therefore in order to complete the proof of Theorem~\ref{thm:main:proc:diss}, it remains to establish the following lemma.

\begin{lemma} \label{diff:of:m_F:and:mhat_F} For all $q \geq 0$,
\begin{equation*}
\lim_{n \to \infty} \big|m^q_n(F_{g_1,g_2,\epsilon_1,\epsilon_2})-\widehat{m}^q_n(F_{g_1,g_2,\epsilon_1,\epsilon_2}) \big| = 0.
\end{equation*}
\end{lemma}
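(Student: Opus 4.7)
The plan is to bound
\[
\bigl|m^q_n(F)-\widehat{m}^q_n(F)\bigr|\le\frac{\gamma_n^\alpha}{n^d}\mathbb{E}\bigl|F(N^q_n)-F(\widehat{N}^q_n)\bigr|
\]
(with $F:=F_{g_1,g_2,\epsilon_1,\epsilon_2}$) and to split the expectation according to the event $B_n$ from the proof of Lemma~\ref{lemma:diff:of:mhat:and:mtilde}. Since $F\in[0,1]$, the contribution on $B_n^c$ is at most $2\gamma_n^\alpha n^{-d}\mathbb{P}(B_n^c)$, which vanishes by \eqref{bound:on:Prob:of:B_n:compliment} together with the standing hypothesis \eqref{gamma}. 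All that remains is to handle the $B_n$-contribution.

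On $B_n$, I would let $i^\ast$ denote the (at most one) Poisson index for which $\gamma_n^{-1}\psi(j_{i^\ast},v_{i^\ast},u_{i^\ast}-t)\in A_\eta$ for some $\|t\|_\infty\le n$, using the convention $\psi(j_{i^\ast},\cdot)\equiv 0$ if no such index exists. Writing
\[
\widetilde{X}^q_t=\psi(j_{i^\ast},v_{i^\ast},u_{i^\ast}-t)+R_t,\qquad R_t:=\sum_{i\neq i^\ast}\psi(j_i,v_i,u_i-t),
\]
and using that each $g_p$ is compactly supported in $[-1,1]^d\times A_\eta$, on $B_n$ all atoms of $\widehat{N}^q_n$ coming from $i\neq i^\ast$ contribute nothing to $\widehat{N}^q_n(g_p)$, so $\widehat{N}^q_n(g_p)=\sum_{\|t\|_\infty\le n}g_p(n^{-1}t,\gamma_n^{-1}\psi(j_{i^\ast},v_{i^\ast},u_{i^\ast}-t))$. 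The Lipschitz property and compact support of $g_p$ then bound $|N^q_n(g_p)-\widehat{N}^q_n(g_p)|$ by a constant multiple of $\sum_{t\in T^\ast}\max_w|R_{t,w}|/\gamma_n$, where $T^\ast$ is the (random, finite) set of $t\in[-n\mathbf{1}_d,n\mathbf{1}_d]$ at which at least one of the arguments of $g_p$ belongs to $A_{\eta/2}$ and $R_{t,w}$ denotes the $w$-component of $R_t$.

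The next step is to introduce an auxiliary event $E_n\subseteq B_n$ on which $\max\{|R_{t,w}|:t\in T^\ast,\,\|w\|_\infty\le q\}<\delta\gamma_n$ for a prescribed $\delta>0$. Standard truncated $S\alpha S$ moment estimates give $\mathbb{E}[R_{t,w}^2]=O(\gamma_n^{2-\alpha})$ uniformly in $(t,w)$ (each surviving jump satisfies $|j_i f(v_i,u_i-t-w)|<\eta\gamma_n$ and $\int_0^{K}j^{1-\alpha}\,dj<\infty$ because $\alpha<2$), so Chebyshev yields $\mathbb{P}(|R_{t,w}|>\delta\gamma_n)=O(\gamma_n^{-\alpha})$; separately, Markov's inequality combined with $f\in L^\alpha$ gives $\mathbb{P}(t\in T^\ast)=O(\gamma_n^{-\alpha})$ uniformly in $t$. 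Since the ``large-jump'' and ``small-jump'' restrictions of the underlying PRM in \eqref{PRM:underlying} are independent, the two events are independent, and summing the product over $\|t\|_\infty\le n$ and $\|w\|_\infty\le q$ produces $\mathbb{P}(B_n\setminus E_n)=O(n^d\gamma_n^{-2\alpha})$, whence $\gamma_n^\alpha n^{-d}\mathbb{P}(B_n\setminus E_n)=O(\gamma_n^{-\alpha})\to 0$. On $E_n$ the Lipschitz bound shows $|F(N^q_n)-F(\widehat{N}^q_n)|$ is of order $\delta|T^\ast|$, and since $\mathbb{E}|T^\ast|=O(n^d\gamma_n^{-\alpha})$ this contributes an $O(\delta)$ error after multiplication by $\gamma_n^\alpha n^{-d}$; sending $\delta\downarrow 0$ after $n\to\infty$ closes the argument.

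The main technical obstacle will be executing this conditional analysis on $B_n$ rigorously: carefully defining the random set $T^\ast$ in terms of the surviving ``large'' Poisson atom $(j_{i^\ast},v_{i^\ast},u_{i^\ast})$, exploiting the independence of the large- and small-jump restrictions of the underlying PRM, and controlling the truncated second moment of $R_{t,w}$ uniformly in the relevant indices. Once these ingredients are in place, the remaining bookkeeping follows the pattern of the analogous step in the proofs of Theorem~3.1 in \cite{resnick:samorodnitsky:2004} and Theorem~4.1 in \cite{hult:samorodnitsky:2010}, modified to accommodate the vector-valued range space $[-\infty,\infty]^{[-q\mathbf{1}_d,q\mathbf{1}_d]}$.
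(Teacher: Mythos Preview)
Your independence step has a genuine gap. You define $T^\ast$ as the set of $t$ at which at least one of the two arguments $\gamma_n^{-1}\widetilde X^q_t$ or $\gamma_n^{-1}\psi(j_{i^\ast},v_{i^\ast},u_{i^\ast}-t)$ lies in $A_{\eta/2}$, precisely so that the Lipschitz bound $|N^q_n(g_p)-\widehat N^q_n(g_p)|\le L_{g_p}\sum_{t\in T^\ast}\max_w|R_{t,w}|/\gamma_n$ holds on $B_n$. But then $\{t\in T^\ast\}$ depends on $\widetilde X^q_t$, hence on the small-atom remainder $R_t$; it is \emph{not} measurable with respect to the large-jump restriction of the PRM, and the claimed independence of $\{t\in T^\ast\}$ and $\{|R_{t,w}|>\delta\gamma_n\}$ fails. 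If you instead define $T^\ast$ purely via the large atom (say $T^{\ast\ast}:=\{t:\gamma_n^{-1}\psi(j_{i^\ast},\cdot)\in A_{\eta/2}\}$) independence is restored, but the Lipschitz bound breaks: for $t\notin T^{\ast\ast}$ you have no control on $R_t$, so $\gamma_n^{-1}\widetilde X^q_t$ can still lie in $A_\eta$ and contribute to $N^q_n(g_p)$. Absorbing these $t$ into a further bad event $\{\exists\,t,w:|R_{t,w}|\ge\eta\gamma_n/2\}$ via Chebyshev and a union bound only gives probability $O(n^d\gamma_n^{-\alpha})$, so after multiplication by $\gamma_n^\alpha n^{-d}$ one is stuck at $O(1)$.

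The paper avoids this circularity by a different decomposition. It first reduces, via the elementary inequalities $|x_1x_2-y_1y_2|\le|x_1-y_1|+|x_2-y_2|$ (for $x_i,y_i\in[0,1]$) and $|e^{-(z_1-\epsilon)_+}-e^{-(z_2-\epsilon)_+}|\le|z_1-z_2|$, to proving $\frac{\gamma_n^\alpha}{n^d}\,\bbE\bigl|N^q_n(g)-\widehat N^q_n(g)\bigr|\to 0$ for each Lipschitz $g\in C^+_K(\mathbb E^q)$ separately. It then works not with the global event $B_n$ but with the \emph{pointwise} event $A(\theta,n)$ (at each fixed $(t,w)$ at most one summand exceeds $\gamma_n\theta$), lets $Y_t$ be the single largest summand in $X_t$, and imports from \cite{resnick:samorodnitsky:2004} the sharper uniform estimate
\[
\gamma_n^\alpha\,\bbP\!\left(\Big\{\max_{t,w}\,\bigl|X_{t-w}-Y_{t-w}\bigr|>\theta\gamma_n\Big\}\cap A(\theta/T,n)\right)\longrightarrow 0
\]
for a suitable integer $T$. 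This control on the remainder is strictly stronger than what Chebyshev plus a union bound delivers, and it is exactly what replaces the independence you need but do not have.
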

\begin{proof}[\textbf{Proof}]
Because of the inequalities $|x_1 x_2 - y_1 y_2| \leq |x_1-y_1|+|x_2-y_2|$ for $x_1, x_2, y_1, y_2 \in [0,1]$ and $|\e^{-(z_1-\epsilon_1)_+}-\e^{-(z_2-\epsilon_2)_+}| \leq |z_1 - z_2|$ for $z_1, z_2 \in \left[0,\infty\right)$ and $\epsilon_1, \epsilon_2 \in (0, \infty)$, the convergence in \Cref{diff:of:m_F:and:mhat_F} will be established provided we show that for all Lipschitz $g \in C^+_K(\mathbb{E}^q)$,
\begin{equation}
\frac{\gamma_n^\alpha}{n^d} \bbE\big|N_n^q(g) - \widehat{N}_n^q(g)\big| \to 0 \label{diff:of:N_g:and:Nhat_g}
\end{equation}
as $n \to \infty$. We shall establish \eqref{diff:of:N_g:and:Nhat_g} by closely following the proof of (3.14) in \cite{resnick:samorodnitsky:2004} and modifying their estimates as needed. We sketch the main steps below.

Assume that $|g| \leq C$ and $g(t,y)=0$ for all $y \in (-\eta,\eta)^{[-q\mathbf{1}_d, q\mathbf{1}_d]}$. For each $n\geq 1$ and for each $\theta >0$, let $A(\theta, n)$ denote the event that for all $\|t\|_\infty \leq n$ and for all $\|w\|_\infty \leq q$, $\sum_{i=1}^\infty \delta_{|j_if(v_i,u_i-t-w)|}\big([\gamma_n \theta, \infty]\big)\leq 1$.
%\begin{equation}
%A^\prime(\theta, n):= \bigcap_{\|w\|_\infty \leq q} %\,\bigcap_{\|t\|_\infty \leq n} \Big\{\sum_{i=1}^\infty %\delta_{|j_if(v_i,u_i-t-w)|}\big([\gamma_n \theta, \infty]\big)\leq 1\Big\}. %\nonumber
%\end{equation}
Then similarly as in \cite{resnick:samorodnitsky:2004} p.201 it follows that for all $\theta >0$, %$\gamma_n^\alpha \bbP \big(A(\theta,n)^c\big) \to 0$ as $n \to \infty$.
\begin{equation}
    \gamma_n^\alpha \bbP \big(A(\theta,n)^c\big) \to 0  \quad \mbox{ as }n \to \infty.\label{bound:on:Prob:A_theta,n^c}
\end{equation}
 Defining $Y_t$ to be the summand of largest modulus in
$
X_t = \sum_{i=1}^\infty j_i f(v_i, u_i-t)
$
 for all $t \in \mathbb{Z}^d$, and adapting the method of \cite{resnick:samorodnitsky:2004} p.~201 to our situation, we can find $T \in \mathbb{N}$ such that for all $\theta < \eta/2$,
\begin{equation}
D(\theta,n):=\bigg\{\bigvee_{\|w\|_\infty \leq q}\,\bigvee_{\|t\|_\infty \leq n} \left|\gamma_n^{-1}X_{t-w} - \gamma_n^{-1}Y_{t-w} \right|>\theta \bigg\}\cap A\left(\theta/T, n\right) \nonumber
\end{equation}
satisfies
\begin{equation} \label{eq.3.26}
    \lim_{n\to\infty}\gamma_n^\alpha \bbP\big(D(\theta,n)\big)= 0.
\end{equation}

Define, for each $q \geq 0$, a random vector field $\{\widetilde{Y}^q_t\}_{t \in \mathbb{Z}^d}$ in $ \mathbb{R}^{[-q\mathbf{1}_d, q\mathbf{1}_d]}$ by replacing $\{X_t\}_{t \in \mathbb{Z}^d}$ by $\{Y_t\}_{t \in \mathbb{Z}^d}$ in \eqref{defn:of:tilde:X}. For any $\theta < \eta/2$, the sequence in \eqref{diff:of:N_g:and:Nhat_g} is bounded by
\begin{align}
& \frac{\gamma_n^\alpha}{n^d} \sum_{\|t\|_\infty \leq n} \bbE\,\big|g(n^{-1}t, \gamma_n^{-1} \widetilde{X}^q_t)-g(n^{-1}t, \gamma_n^{-1} \widetilde{Y}^q_t)\big| \1_{D(\theta,n)}\nonumber\\
& \;\;\;\;+ \frac{\gamma_n^\alpha}{n^d} \sum_{\|t\|_\infty \leq n} \bbE\,\big|g(n^{-1}t, \gamma_n^{-1} \widetilde{X}^q_t)-g(n^{-1}t, \gamma_n^{-1} \widetilde{Y}^q_t)\big|  \1_{A(\theta/M,n) \setminus D(\theta,n)}\nonumber\\
& \;\;\;\;+ \frac{\gamma_n^\alpha}{n^d} \bbE\, \big|N_n^q(g)\big|\1_{A(\theta/M,n)^c} + \frac{\gamma_n^\alpha}{n^d} \bbE\, \big|\widehat{N}_n^q(g)\big|\1_{A(\theta/M,n)^c} \nonumber\\
%\intertext{where $G(\theta,n):=A(\theta/M,n) \setminus D(\theta,n)$. Using $|g| \leq C$, the above quantity can be bounded by}
%& \leq \frac{\gamma_n^\alpha}{n^d} \sum_{\|t\|_\infty \leq n} \bbE\,\big|g(n^{-1}t, \gamma_n^{-1} \widetilde{X}^q_t)-g(n^{-1}t, \gamma_n^{-1} \widetilde{Y}^q_t)\big|  \1_{G(\theta,n)} \nonumber\\
%&\;\;\;\;+ 2\left(\frac{2n+1}{n}\right)^d C \gamma_n^\alpha \bbP\big(D(\theta,n)\big) + \left(\frac{2n+1}{n}\right)^d C\gamma_n^\alpha \bbP\big(A(\theta/M,n)^c\big) \nonumber \\
%&\;\;\;\;+ \frac{\gamma_n^\alpha}{n^d} \bbE\, \big|\widehat{N}_n^q(g)\big|\1_{A(\theta/M,n)^c}, \nonumber\\
%\intertext{which, by virtue of \eqref{bound:on:Prob:A_theta,n^c} and \eqref{choice:of:M}, becomes}
&=\frac{\gamma_n^\alpha}{n^d} \sum_{\|t\|_\infty \leq n} \bbE\,\big|g(n^{-1}t, \gamma_n^{-1} \widetilde{X}^q_t)-g(n^{-1}t, \gamma_n^{-1} \widetilde{Y}^q_t)\big|  \1_{A(\theta/M,n) \setminus D(\theta,n)} \nonumber\\
&\;\;\;\;+\frac{\gamma_n^\alpha}{n^d} \bbE\, \big|\widehat{N}_n^q(g)\big|\1_{A(\theta/M,n)^c}  \;+\;o(1). \nonumber
\end{align}
In the last step, we used the asymptotic results \eqref{bound:on:Prob:A_theta,n^c} and \eqref{eq.3.26}, and the fact that $g$ bounded.
Following \cite{resnick:samorodnitsky:2004} p.~202 , the first term above can be bounded by $2 L_g (\eta/2)^{-\alpha} (2q+1)^d \|f\|_\alpha^\alpha \left(\frac{2n+1}{n}\right)^d \theta$ (here $L_g$ denotes the Lipschitz constant of $g$) and repeating the method used in the proof of Lemma~\ref{lemma:diff:of:mhat:and:mtilde}, the second term can be shown to be $o(1)$. Since $\theta \in (0, \eta/2)$ is arbitrary, \eqref{diff:of:N_g:and:Nhat_g} follows.
\end{proof}
%&= \frac{\gamma_n^\alpha}{n^d} \sum_{\|t\|_\infty \leq n} \bbE\,\big|g(n^{-1}t, \gamma_n^{-1} \widetilde{X}^q_t)-g(n^{-1}t, \gamma_n^{-1} \widetilde{Y}^q_t)\big|  \1_{G(\theta,n)}\,+\,o(1)

 %The argument used in page 202 of \cite{resnick:samorodnitsky:2004} (with $A(\theta/M_n)$ instead of $B_n$), $g\1_{[-1,1]^d \times (-\eta,\eta)^{[-q\mathbf{1}_d, q\mathbf{1}_d]}} \equiv 0$ and $\theta<\eta/2$ yield that the first term is bounded by
%\[
%\frac{\gamma_n^\alpha}{n^d}\theta L_g\,\bbE \bigg(\sum_{i=1}^\infty \sum_{\|t\|_\infty \leq n} \delta_{\gamma_n^{-1} \psi(j_i,v_i, u_i-t)}(A_{\eta/2})\bigg)
%\]
%with $A_{\eta/2}$ as defined in \eqref{defn:A_eta},
%where $L_g$ is the Lipschitz constant of $g$. Since
%\begin{align*}
%\frac{\gamma_n^\alpha}{n^d} \bbE \bigg(\sum_{i=1}^\infty \sum_{\|t\|_\infty \leq n} \delta_{\gamma_n^{-1} \psi(j_i,v_i, u_i-t)}(A_{\eta/2})\bigg)
%\leq 2(\eta/2)^{-\alpha} (2q+1)^d \|f\|_\alpha^\alpha \left(\frac{2n+1}{n}\right)^d
%\end{align*}

%\Cref{lemma:diff:of:mhat:and:mtilde}, \eqref{conv:tildem_n} and \Cref{diff:of:m_F:and:mhat_F} result in  the proof of Theorem~\ref{thm:main:proc:diss}.

\section{The conservative case} \label{section:conservative}

Suppose now that $\mathbf{X}$ is a stationary $S\alpha S$ random field generated by a conservative $\mathbb{Z}^d$-action. Unlike the mixed moving average representation in the dissipative case, no nice representation is available in general. However, if we view the underlying action as a group of invertible nonsingular transformations on $(S,\mathcal{S},\mu)$ (see \cite{roy:samorodnitsky:2008} and \cite{roy:2010a}), then under certain conditions,  $\mathbf{X}$ can be thought of as a lower dimensional mixed moving average field. This will enable us to analyze the large deviation issues of point processes induced by such fields.

Let $A:=\{\phi_t:\,t \in \mathbb{Z}^d\}$ be the subgroup of the
group of invertible nonsingular transformations on $(S,\mathcal{S},\mu)$ and
$
\Phi:\mathbb{Z}^d \rightarrow A
$
be a group homomorphism defined by $\Phi(t)=\phi_t$ for all $t \in \mathbb{Z}^d$ with kernel
$K:=Ker(\Phi)=\{t \in \mathbb{Z}^d:\,\phi_t = 1_S\}$. Here $1_S$
is the identity map on $S$. By the first isomorphism theorem of groups (see, for example,
\cite{lang:2002}) we have $A \simeq \mathbb{Z}^d/K$. Therefore, the structure theorem for finitely generated abelian groups (see Theorem $8.5$ in Chapter I of \cite{lang:2002}) yields
$
A=\overline{F} \oplus \overline{N}\,,
$
where $\overline{F}$ is a free abelian group and $\overline{N}$ is a finite
group. Assume $rank(\overline{F})=p \geq 1$ and $|\overline{N}|=l$. Since
$\overline{F}$ is free, there exists an injective group homomorphism
$
\Psi: \overline{F} \rightarrow \mathbb{Z}^d
$
such that $\Phi \circ \Psi$ is the identity map on $\overline{F}$.

Clearly, $F:=\Psi(\overline{F})$ is a free subgroup of $\mathbb{Z}^d$ of rank $p \leq d$. It follows easily that the sum $F+K$ is direct and
$
\mathbb{Z}^d/(F+K) \simeq  \overline{N}
$.
Let $x_1+(F +K)$,
$x_2+(F +K),\,\ldots\,,x_l+(F+K)$ be all the cosets of $F +K$ in $\mathbb{Z}^d$. It has been observed in \cite{roy:samorodnitsky:2008} that
$
H:=\bigcup_{k=1}^l (x_k + F) \label{defn_of_H}
$
forms a countable Abelian group (isomorphic to $\mathbb{Z}^d/K$) under addition $\oplus$ modulo $K$ [for all $s_1,s_2\in H$, $s_1\oplus s_2$ is defined as the unique $s\in H$ such that $(s_1+s_2)-s\in K$]
and it admits a map $N:H \to
\{0,1,\ldots\}$ defined by \linebreak
$
N(s):=\min\{\|s+v\|_ \infty: v \in K\}
$
satisfying symmetry [for all $s \in H$, \linebreak $N(s^{-1})=N(s)$, where $s^{-1}$ is the inverse of $s$ in $(H,\oplus)$] and triangle inequality [for all $s_1, s_2
\in H$, $N(s_1 \oplus s_2) \leq N(s_1) + N(s_2)$]. Note that every $t \in \Zd$ can be decomposed uniquely as $t=t_H+t_K$, where $t_H \in H$ and $t_K \in K$.
Therefore, we can define a projection map $\pi: \Zd \to H$ as $\pi(t)=t_H$ for all $s \in \Zd$.

Define, for all $n \geq 1$,
$
H_n=\{s \in H: N(s) \leq n\}.
$
It is easy to see that $H_n$'s are finite subsets increasing to $H$ and
\begin{equation}
|H_n| \sim c n^p \label{rate_of_growth:H_n} \quad \mbox{ as }n\to\infty,
\end{equation}
for some $c>0$; see (5.19) in \cite{roy:samorodnitsky:2008}. If $\{\phi_t\}_{t \in F}$ is a dissipative group action then $\{\wt\phi_s\}_{s \in H}$ defined by
$
\wt\phi_s= \phi_s
$
is a dissipative $H$-action; see, once again, \cite{roy:samorodnitsky:2008}, p.228. Because of Remark 4.3 in \cite{roy:2010a} (an extremely useful observation of Jan Rosi\'nski), without loss of generality, all the known examples of stationary $S \alpha S$ random fields can be assumed to satisfy
\begin{equation}
c_v \equiv 1 \;\;\;\mbox{ for all }v \in K,
\label{assumption_on_c_t_for_t_in_K}
\end{equation}
which would immediately yield that $\{c_s\}_{s \in H}$ is an $H$-cocycle for
$\{\wt\phi_s\}_{s \in H}$. Hence the subfield $\{X_s\}_{s \in H}$ is $H$-stationary and is
generated by the dissipative action $\{\wt\phi_s\}_{s \in H}$. This implies, in particular, that there is a standard Borel space
$(W,\mathcal{W})$ with a $\sigma$-finite measure $\nu$ on it such
that
\begin{equation}
X_s \eqdef \int_{W \times H} h(v,u \oplus s)\,M^\prime(dv,du),\;\;\;
s \in H, \label{mixed_moving_avg_repn_of_X_r_r_in_H}
\end{equation}
for some $h \in L^{\alpha}(W\times H, \nu \otimes \zeta_H)$, where
$\zeta_H$ is the counting measure on $H$, and  $M^\prime$ is a $S\alpha
S$ random measure on $W \times H$ with control measure $\nu \otimes
\zeta_H$ (see, for example, Remark $2.4.2$ in \cite{roy:2008}).
Let
$$
\sum_{i=1}^{\infty} \delta_{(j_i,v_i,u_i)} \sim \PRM(\nu_\alpha \otimes
\nu \otimes \zeta_H)
$$
be a Poisson random measure on $([-\infty,\infty] \setminus \{0\}) \times W
\times H$, where $\nu_\alpha(\cdot)$ is the measure defined by \eqref{defn:nu_alpha}.
The following series representation holds in parallel to \eqref{repn_Possion_integral_X_n} after dropping a factor of $C_\alpha^{1/\alpha}$ ($C_\alpha$ is as in \eqref{defn:C_alpha}):
\begin{equation*}
X_s = \sum_{i=1}^\infty j_ih(v_i,u_i\oplus
s),\;\;\;s \in H.
\end{equation*}
Note that $rank(K)=d-p$; see the proof of Proposition 3.1 in \linebreak \cite{chakrabarty:roy:2013}.
Assume $p<d$. Let $U$ be a $d \times p$ matrix whose columns form a basis of $F$ and $V$ be a $d \times (d-p)$ matrix whose columns form a basis of $K$. Let
\begin{equation}
\Delta:=\{y \in \mathbb{R}^p: \mbox{ there exists }\lambda \in
\mathbb{R}^{d-p} \mbox{ such that } \|Uy+V\lambda\|_\infty \leq 1 \}, \nonumber
\end{equation}
which is a compact and convex set; see Lemma~5.1 in \cite{roy:2010a}. For all $y \in
\Delta$, define
$
Q_y:=\{\lambda \in \mathbb{R}^{d-p}:\, \|Uy+V\lambda\|_\infty \leq 1 \}
$
and let $\mathcal{V}(y)$ be the $q$-dimensional volume of $Q_y$. Lemma~5.1 in \cite{roy:2010a}
says that $\mathcal{V}:\Delta\to [0,\infty)$ is a continuous map.

We also define a map
$
\psi_H: ([-\infty, \infty] \setminus \{0\}) \times W \times H\to [-\infty, \infty]^{[-q\mathbf{1}_d, q\mathbf{1}_d]}
$
by
\begin{equation*}
\psi_H(x, v, u)= \{xh(v,u\ominus \pi(w))\}_{ w \in [-q\mathbf{1}_d, q\mathbf{1}_d]},
\end{equation*}
where $\pi$ is the projection on $H$ as above and $u \ominus s:=u \oplus s^{-1}$ with $s^{-1}$ being the inverse of $s$ in $(H,\oplus)$.

The rank $p$ can be regarded as the effective dimension of the random field and it gives more precise information on the rate of growth of the partial maxima than the actual dimension $d$. More precisely, according to Theorem 5.4 in \cite{roy:samorodnitsky:2008},
\begin{equation*}
n^{-p/\alpha} \max_{\|t\|_\infty \leq n }|X_t| \Rightarrow \left\{
                                     \begin{array}{ll}
                                     c^\prime_\bX \xi_\alpha & \mbox{ if $\{\phi_t\}_{t \in F}$ is a dissipative action,} \\
                                     0              & \mbox{ if $\{\phi_t\}_{t \in F}$ is a conservative action},
                                     \end{array}
                              \right. %\label{conv_of_M_n_better_est}
\end{equation*}
where $c^\prime_\bX$ is a positive constant depending on $\bX$ and $\xi_\alpha$ is as in \eqref{cdf_of_Z_alpha}.

%\subsection{Point process behavior} \label{section:conservative:point process}

However, even when $\{\phi_t\}_{t \in F}$ is dissipative and \eqref{assumption_on_c_t_for_t_in_K} holds, the point process sequence $\sum_{\|t\|_\infty \leq n} \delta_{n^{-p/\alpha}X_t}$ does not remain tight due to clustering of points owing to the longer memory of the field. It so happens that the cluster sizes are of order $n^{d-p}$ and therefore the scaled point process
$
n^{p-d}\sum_{\|t\|_\infty \leq n} \delta_{n^{-p/\alpha}X_t}
$
converges weakly to a random measure on $[-\infty, \infty] \setminus \{0\}$; see Theorem 4.1 in \cite{roy:2010a}.
To be precise
\[
   \hspace*{-0.2cm} n^{p-d}\sum_{\|t\|_\infty \leq n} \delta_{( l \text{Leb}(\Delta)n^p)^{-1/\alpha}X_t}\weak\sum_{u \in H}\sum_{i=1}^{\infty}
    \mathcal{V}(\xi_i)\delta_{j_ih(v_i,u)} \quad \mbox{ as } n\to\infty,
\]
where $\sum_{i=1}^{\infty}\delta_{(\xi_i,j_i,v_i)} \sim $ \PRM$(\mbox{Leb}|_{\Delta}\otimes\nu_\alpha \otimes \nu)$.
Therefore, we take a sequence $\norm_n$ such that $ n^{p/\alpha}/ \norm_n \to 0$
as $n\to\infty$ so that for all $q \geq 0$ and $\widetilde{X}^q_t$ as defined in \eqref{defn:of:tilde:X},
\begin{equation}
\Lamb^q_n:=n^{p-d}\sum_{\|t\|_\infty \leq n} \delta_{(n^{-1}t, \norm_n^{-1}\widetilde{X}^q_t)} \label{defn:of:lambda_n}
\end{equation}
converges almost surely to $\O$. With the notations introduced above, we have the following result.

\begin{Theorem}\label{thm:main:proc:cons}
Let $\{X_t\}_{t\in\mathbb{Z}^d}$ be a stationary symmetric $\alpha$-stable random field generated by a conservative action $\{\phi_t\}_{t \in \mathbb{Z}^d}$ and $\Lamb^q_n$ be as in \eqref{defn:of:lambda_n} with
\begin{eqnarray} \label{beta}
    n^{\frac{p}{\alpha}}/\norm_n\to 0 \quad \mbox{ as } n\to\infty.
\end{eqnarray}
Assume $1 \leq p < d$, $\{\phi_t\}_{t \in F}$ is dissipative and \eqref{assumption_on_c_t_for_t_in_K} holds. Then for all $q \geq 0$, the HLS convergence
\begin{equation}
\ka^q_n(\cdot):=\frac{\norm_n^\alpha}{n^p}\bbP(\Lamb^q_n \in \cdot) \rightarrow \ka^q_\ast(\cdot) \label{conv:kappa_n}
    \quad \mbox{ as } n\to\infty
\end{equation}
holds in the space $\mathbf{M}_0(\mathbb{M}^q)$, where $\ka^q_\ast$ is a measure on $\mathbb{M}^q$ defined by
\begin{align*}
\ka^q_\ast(\cdot):=&\,l \,(\text{Leb}|_\Delta \otimes \nu_\alpha \otimes \nu) \Big(\Big\{(y,x,v) \in \Delta \times ([-\infty,\infty] \setminus \{0\}) \times W: \\
&\;\;\;\;\;\;\;\;\;\;\;\;\;\;\;\;\;\;\;\;\;\;\;\;\;\;\;\;\;\;\;\;\;\;\;\;\;\;\;\;\;\;
\int_{Q_y}\,\sum_{u \in H} \delta_{\left(Uy+V\lambda,\,\psi_H(x, v, u)\right)} \,d\lambda\in \cdot\Big\}\Big)
\end{align*}
and satisfying $\ka^q_\ast(\mathbb{M}^q \setminus B(\O,\varepsilon))<\infty$ for all $\varepsilon>0$.
\end{Theorem}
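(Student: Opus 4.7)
The plan is to follow the three-lemma scheme of \cref{thm:main:proc:diss}, adapted to the $H$-structure. Because \eqref{assumption_on_c_t_for_t_in_K} forces $X_{t+v} = X_t$ a.s.\ for every $v \in K$, the $H$-representation \eqref{mixed_moving_avg_repn_of_X_r_r_in_H} lifts to
\[
X_t = \sum_{i=1}^\infty j_i\, h(v_i,\, u_i \oplus \pi(t)),\qquad t \in \mathbb{Z}^d,
\]
with $\sum_i \delta_{(j_i,v_i,u_i)} \sim \PRM(\nu_\alpha \otimes \nu \otimes \zeta_H)$. By \cref{propn:suff:condn:HLS:conv} it suffices to verify (i) $\kappa^q_\ast \in \mathbf{M}_0(\mathbb{M}^q)$ and (ii) $\kappa^q_n(F_{g_1,g_2,\epsilon_1,\epsilon_2}) \to \kappa^q_\ast(F_{g_1,g_2,\epsilon_1,\epsilon_2})$ for Lipschitz $g_1,g_2 \in C_K^+(\mathbb{E}^q)$ and $\epsilon_1,\epsilon_2>0$. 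Claim (i) will follow along the template of \cref{lemma:diss:1}: boundedness of $\mathcal{V}$ on $\Delta$ (Lemma~5.1 of \cite{roy:2010a}) together with $1-e^{-(z-\epsilon)_+}\le z$ reduces the finiteness of $\kappa^q_\ast(F_{g_1,g_2,\epsilon_1,\epsilon_2})$ to a bound of order $l(2q+1)^d \|h\|_\alpha^\alpha$.

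For (ii), I would introduce
\[
\widehat{\Lamb}^q_n := n^{p-d}\sum_{i=1}^\infty\sum_{\|t\|_\infty\le n}\delta_{(n^{-1}t,\,\gamma_n^{-1}\psi_H(j_i,v_i,u_i\ominus\pi(t)))},\qquad \widehat{\kappa}^q_n := \tfrac{\gamma_n^\alpha}{n^p}\bbP(\widehat{\Lamb}^q_n\in\cdot),
\]
and the one-atom proxy $\widetilde{\kappa}^q_n$ modelled on \eqref{eq:mtilde32}, with $Z_{p,i} := n^{p-d}\sum_{\|t\|_\infty\le n} g_p(n^{-1}t,\gamma_n^{-1}\psi_H(j_i,v_i,u_i\ominus\pi(t)))$. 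The analogues of \cref{lemma:diff:of:mhat:and:mtilde} and \cref{diff:of:m_F:and:mhat_F} should go through essentially verbatim once the error term $O(n^d/\gamma_n^\alpha)$ is replaced by $O(n^p/\gamma_n^\alpha)\to 0$ via \eqref{beta}. The key geometric cancellation is that for each $t_H\in H$ the fibre $\{t_K\in K:\|t_H+t_K\|_\infty\le n\}$ has cardinality $O(n^{d-p})$ (Lemma~5.1 of \cite{roy:2010a}), so $\sum_{\|t\|_\infty\le n}|h(v,u\ominus\pi(t))|^\alpha = O(n^{d-p}\|h(v,\cdot)\|_\alpha^\alpha)$, and this $n^{d-p}$ cluster factor is absorbed precisely by the $n^{p-d}$ prefactor of $\Lamb^q_n$.

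The central new step will be $\widetilde{\kappa}^q_n(F) \to \kappa^q_\ast(F)$. Campbell's formula gives
\[
\widetilde{\kappa}^q_n(F) = \tfrac{\gamma_n^\alpha}{n^p}\int_{|x|>0}\int_W\sum_{u\in H}\bigl(1-e^{-(Z_1-\epsilon_1)_+}\bigr)\bigl(1-e^{-(Z_2-\epsilon_2)_+}\bigr)\,\nu(dv)\,\nu_\alpha(dx),
\]
where $Z_p(x,v,u):=n^{p-d}\sum_{\|t\|_\infty\le n}g_p(n^{-1}t,\gamma_n^{-1}\psi_H(x,v,u\ominus\pi(t)))$. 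The substitution $x\mapsto\gamma_n x$ cancels the $\gamma_n^\alpha$ factor and removes the $\gamma_n^{-1}$ inside $\psi_H$. The limit then emerges from two nested Riemann-sum approximations. First, parametrising $t = x_{k'}+Uy^\ast+V\lambda^\ast$ with $(y^\ast,\lambda^\ast)\in\mathbb{Z}^p\times\mathbb{Z}^{d-p}$ and $k'\in\{1,\ldots,l\}$, for each $\pi(t)=x_{k'}+Uy^\ast$ the constrained $V\lambda^\ast$-sum rescaled by $n^{-(d-p)}$ is a Riemann sum for $\int_{Q_{y^\ast/n}}g_p(Uy^\ast/n+V\lambda,\cdot)\,d\lambda$, and the $n^{d-p}$ factor is absorbed by the $n^{p-d}$ prefactor. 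Second, after reindexing the shift $u' := u\ominus\pi(t)$ (which decouples the $\psi_H$-dependence from $(y^\ast,\lambda^\ast)$), the outer $n^{-p}\sum_{u\in H}$ becomes a Riemann integral over $\Delta$, with the summation over the coset indices $k$ producing the prefactor $l$. Matching all the pieces reproduces $\kappa^q_\ast(F)$.

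The hardest part will be the uniform justification of these two nested Riemann-sum passages under the $\nu_\alpha\otimes\nu$-integral, since the limit measure has a continuous ($d\lambda$) component and the domain $\Delta$ carries only a continuous (not smooth) volume function $\mathcal{V}$. I would handle the boundary effects of $\Delta$ via continuity of $\mathcal{V}$ (Lemma~5.1 of \cite{roy:2010a}), and dominated convergence applies thanks to the compact support of $g_1,g_2$ --- which, after the rescaling $x\mapsto\gamma_n x$, effectively restricts the shift $u\ominus\pi(t)$ to a finite, $(x,v)$-dependent subset of $H$ --- together with the $\|h\|_\alpha^\alpha$-bound from the previous paragraph.
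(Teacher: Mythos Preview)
Your overall three-lemma scheme matches the paper exactly: verify $\kappa^q_\ast\in\mathbf{M}_0(\mathbb{M}^q)$, then prove the analogues of \cref{lemma:diff:of:mhat:and:mtilde} and \cref{diff:of:m_F:and:mhat_F} with $n^d$ replaced by $n^p$. (The paper rewrites $\Lambda^q_n$ via $C_{s,n}:=[-n\mathbf{1}_d,n\mathbf{1}_d]\cap(s+K)$ for $s\in H_n$ rather than your direct $\|t\|_\infty\le n$ parametrisation, but this is cosmetic.) The substantive difference is in the step $\widetilde{\kappa}^q_n\to\kappa^q_\ast$.

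The paper does \emph{not} attempt a direct dominated-convergence argument. Instead it first truncates $h$ to $h\,\1_{W\times H_T}$; for such $h$ the outer sum over $u\in H$ collapses to a finite sum over $H_{n+T+q}$, the Riemann-sum passage is then carried out by invoking Lemma~5.1 and the proof of (5.17) in \cite{roy:2010a}, and the general case follows by a converging-together argument (referencing (5.21)--(5.22) of \cite{roy:2010a}) based on the Lipschitz inequalities from \cref{diff:of:m_F:and:mhat_F}. Your direct route is not wrong in principle---the $\|h\|_\alpha^\alpha$-bound you cite does yield an integrable majorant over $(x,v)$---but the ``two nested Riemann sums'' sketch is where the real work hides and is underspecified. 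In particular, the reindexing $u':=u\ominus\pi(t)$ mixes the outer summation variable $u$ with the inner variable $t$ and cannot be performed as a single change of variables; what actually decouples things is that for fixed $(x,v)$ only finitely many values of $u\oplus\pi(t)$ enter, which is the ``effective truncation'' you allude to, and making the Riemann-sum limit rigorous uniformly in $u$ essentially forces you back to the paper's explicit truncation of $h$. One small correction: to match $X_t=\sum_i j_i h(v_i,u_i\oplus\pi(t))$, your $\widehat{\Lambda}^q_n$ should carry $\psi_H(j_i,v_i,u_i\oplus\pi(t))$, not $\ominus$.
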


\begin{proof}[\textbf{Proof}]
Since this proof is similar to the proof of Theorem~\ref{thm:main:proc:diss} above with ingredients from \cite{roy:2010a}, we shall only sketch the main steps. For example,  it can be verified that $\ka^q_\ast\in \mathbf{M}_0(\mathbb{M}^q)$ using the same approach used in the proof of \Cref{lemma:diss:1}.

As before, fix Lipschitz functions $g_1,\,g_2 \in C^+_K(\mathbb{E}^q)$ and  $\epsilon_1,\,\epsilon_2>0$. For all $s \in \mathbb{Z}^d$ and $n \geq 1$, define $C_{s,n}:= [-n\mathbf{1}_d,n\mathbf{1}_d]\cap (s+K)$. With the help of this notation, $\Lamb_n^q$ can be rewritten as
$
\Lamb_n^q:=n^{p-d} \sum_{s \in H_n} \sum_{t \in C_{s,n}} \delta_{(n^{-1}t,\,\norm_n^{-1}\widetilde{X}^q_s)}.
$
Using the heuristics given before the proof of Theorem \ref{thm:main:proc:diss}, one can guess that the large deviation of $\Lamb_n^q$ would be same as that of
$$
\widehat{\Lamb}_n^q:=n^{p-d} \sum_{i=1}^\infty \sum_{s \in H_n} \sum_{t \in C_{s,n}} \delta_{(n^{-1}t, \norm_n^{-1}\psi_H(j_i,v_i,u_i\oplus s))}.
$$
Keeping this in mind, we define
\begin{eqnarray*}
\widehat{\ka}_n^q:=\frac{\norm_n^\alpha}{n^p} \bbP(\widehat{\Lamb}_n^q \in \cdot) \in \mathbf{M}_0(\mathbb{M}^q)
\end{eqnarray*}
and follow the proof of Lemma~\ref{diff:of:m_F:and:mhat_F} to establish that
\begin{equation}
\lim_{n \to \infty} \big|\ka_n^q(F_{g_1, g_2, \epsilon_1, \epsilon_2}) - \widehat{\ka}_n^q(F_{g_1, g_2, \epsilon_1, \epsilon_2})\big| = 0, \label{diff:ka_and_kahat}
\end{equation}
where $F_{g_1, g_2, \epsilon_1, \epsilon_2}$ is as in \eqref{eq:F}.

Moreover, we define for all $q\geq 0$,
\begin{eqnarray}
&&\hspace*{-0.6cm}\widetilde{\ka}_n^q(F_{g_1, g_2, \epsilon_1, \epsilon_2}) \nonumber\\
&&\hspace*{-0.3cm}:=\frac{\norm_n^\alpha}{n^p} \bbE\bigg[\sum_{i=1}^\infty \Big\{\Big(1 - \e^{-( n^{p-d}\sum_{s \in H_n} \sum_{t \in C_{s,n}}g_1(n^{-1}t,\,\norm_n^{-1} \psi_H(j_i, v_i, u_i \oplus s))-\epsilon_1)_+}\Big) \nonumber\\
&&\hspace*{-0.3cm} \;\;\;\;\;\;\;\;\;\;\;\;\;\;\;\;\;\;\;\;\;\;\times \Big(1 - \e^{-(n^{p-d}\sum_{s \in H_n} \sum_{t \in C_{s,n}}g_2(n^{-1}t,\,\norm_n^{-1} \psi_H(j_i, v_i, u_i \oplus s))-\epsilon_2)_+}\Big)\Big\}\bigg].\nonumber
\end{eqnarray}
Assuming that $g_1(t,y)=g_2(t,y)=0$ for all $y \in (-\eta,\eta)^{[-q\mathbf{1}_d, q\mathbf{1}_d]}$, and using \eqref{rate_of_growth:H_n} and an argument parallel to the one used in establishing \eqref{bound:on:Prob:of:B_n:compliment} above, it follows that
\[
\frac{\norm_n^\alpha}{n^p} \bbP\Big(\mbox{for more than one }i,\,\sum_{s\in H_n} \delta_{\norm_n^{-1} \psi_H(j_i, v_i, u_i \oplus s)}(A_\eta) \geq 1\Big) \to 0,
\]
from which we can establish a version of Lemma \ref{lemma:diff:of:mhat:and:mtilde} in this set up and conclude
\begin{equation}
    \lim_{n\to\infty}|\widehat{\ka}^q_n(F_{g_1,g_2,\epsilon_1,\epsilon_2})-\widetilde{\ka}^q_n(F_{g_1,g_2,\epsilon_1,\epsilon_2})| = 0. \label{diff:katilde_and_kahat}
\end{equation}

\begin{comment}
The proof goes a long the lines of the proof of  Lemma \ref{lemma:diff:of:mhat:and:mtilde}. Furthermore, the same
arguments as in \Cref{diff:of:m_F:and:mhat_F} yield
\begin{lemma} \label{diff:of:m_F:and:mhat_F:cons}
\begin{equation*}
\lim_{n \to \infty} \big|\ka^q_n(F_{g_1,g_2,\epsilon_1,\epsilon_2})-\widehat{\ka}^q_n(F_{g_1,g_2,\epsilon_1,\epsilon_2}) \big| = 0.
\end{equation*}
\end{lemma}
\end{comment}

In light of \cref{propn:suff:condn:HLS:conv}, \eqref{diff:ka_and_kahat}, and \eqref{diff:katilde_and_kahat}, it is enough to prove that for all $q\geq 0$,
\begin{eqnarray} \label{conv:tildem_n:cons}
    \lim_{n\to\infty}\widetilde{\ka}_n^q(F_{g_1, g_2, \epsilon_1, \epsilon_2})= {\ka}_*^q(F_{g_1, g_2, \epsilon_1, \epsilon_2}).
\end{eqnarray}
We shall start with the special case when $h$ is supported on $W \times H_T$ for some $T \geq 1$. For such a function $h$, we have
\begin{align*}
&\widetilde{\ka}_{n}^q(F_{g_1, g_2, \epsilon_1, \epsilon_2})\\
&=\frac{1}{n^p}\int_{|x|>0}\int_W \sum_{u \in H_{n+T+q}} \Big\{\Big(1 - \e^{-(n^{p-d}\sum_{s \in H_{n}} \sum_{t \in C_{s,n}}g_1(n^{-1}t,\,\psi_H(x, v, s\ominus u))-\epsilon_1)_+}\Big)\\
& \;\;\;\;\;\;\;\;\;\;\;\;\;\;\;\;\;\;\;\;\;\;\;\;\;\;\;\;\;\;\times \Big(1 - \e^{-(n^{p-d}\sum_{s \in H_n} \sum_{t \in C_{s,n}}g_2(n^{-1}t,\,\psi_H(x, v, s\ominus u))-\epsilon_2)_+}\Big)\Big\}\\
&\hspace{3.9in} \nu(dv) \nu_\alpha(dx),
\end{align*}
from which, applying Lemma ~5.1 in \cite{roy:2010a}, \eqref{rate_of_growth:H_n} above and the fact that $g_1$ and $g_2$ are Lipschitz, it follows that
\begin{eqnarray*}
&&\hspace*{-0.6cm}\widetilde{\ka}_{n}^q(F_{g_1, g_2, \epsilon_1, \epsilon_2})\label{claim}\\
&&\hspace*{-0.6cm}=\frac{1}{n^p}\int_{|x|>0}\int_W \sum_{u \in H_{n+T+q}} \Big\{\Big(1 - \e^{-(n^{p-d}\sum_{z \in B_{u,n}} \sum_{t \in C_{u,n}}g_1(n^{-1}t,\,\psi_H(x, v, z))-\epsilon_1)_+}\Big) \nonumber\\
&& \;\;\;\;\;\;\;\;\;\;\;\;\;\;\;\;\;\;\;\;\;\;\;\;\;\;\;\;\times \Big(1 - \e^{-(n^{p-d}\sum_{z \in B_{u,n}} \sum_{t \in C_{u,n}}g_2(n^{-1}t,\,\psi_H(x, v, z))-\epsilon_2)_+}\Big)\Big\}\nonumber\\
&&\hspace{3.2in} \nu(dv) \nu_\alpha(dx)+o(1), \nonumber
\end{eqnarray*}
where $B_{u,n}:=\{z \in H_{T+q}: z\oplus u \in H_n\}$. The above equality and an argument similar to the one used in establishing (5.17) of \cite{roy:2010a} yield
\begin{align*}
&\lim_{n \to \infty}\widetilde{\ka}_{n}^q(F_{g_1, g_2, \epsilon_1, \epsilon_2})\\
&= l\int_{|x|>0}\int_W \int_\Delta \Big\{\Big(1 - \e^{-(\int_{Q_y}\sum_{z \in H_{T+q}} g_1(Uy+V\lambda,\,\psi_H(x, v, z))d\lambda-\epsilon_1)_+}\Big)\\
& \;\;\;\;\;\;\;\;\;\;\;\;\;\;\;\;\;\times \Big(1 - \e^{-(\int_{Q_y}\sum_{z \in H_{T+q}} g_2(Uy+V\lambda,\,\psi_H(x, v, z))d\lambda-\epsilon_2)_+}\Big)\Big\}
\,dy \,\nu(dv)\,\nu_\alpha(dx).
\end{align*}
This establishes \eqref{conv:tildem_n:cons} for $h$ with support $W\times H_T$ for some $T \geq 1$. The proof of \eqref{conv:tildem_n:cons} in the general case follows easily from the above by using a standard converging together technique (see the proofs of (5.21) and (5.22) in \cite{roy:2010a}) based on the inequalities used to establish Lemma~\ref{diff:of:m_F:and:mhat_F}. This completes the proof of Theorem~\ref{thm:main:proc:cons}.
\end{proof}

\begin{remark}
It is possible to interpret \cref{thm:main:proc:diss} as a special
case of \cref{thm:main:proc:cons} by setting  $p=d$, $l=1$, $\Delta=[-1,1]^d$, $H=\bbz^d,$ $U=I_d$ (the identity matrix of order $d$), $V=0$ along with the convention that $\mathbb{R}^{0}=\{0\}$ so that $Q_y=\{0\}$ for all $y \in [-1,1]^d$ and $\lambda$ is interpreted as the counting measure on $\{0\}$ (think of it as the zero-dimensional Lebesgue measure). However, since the above proof does not honour these conventions, a separate proof had to be given for \cref{thm:main:proc:diss}. Same remark applies to the two parts of \cref{thm:large_deviation} below.
%In the dissipative case we can take $p=d$, $l=1$, $\mathcal{V}\equiv 1$, $\Delta=[-1,1]^d$ and $Q_y=\{0\}$
%such that $l\left(\int_\Delta \mathcal{V}(y)^{\alpha}\,\Leb|_\Delta({\rm d}y)\right)=2^d$. This reflects \cref{thm:main:proc:diss}.
%\hfill$\Box$
\end{remark}

\begin{example}
In order to understand Theorem~\ref{thm:main:proc:diss} and its notations, let us consider Example 6.1 in \cite{roy:2010a} and apply \cref{thm:main:proc:cons} on it. This means $d=2$, $S=\mathbb{R}$, $\mu$ is the Lebesgue measure and $\{\phi_{(t_1,t_2)}\}$ is a measure preserving conservative $\mathbb{Z}^2$-action on $\mathbb{R}$ defined by
$
\phi_{(t_1,t_2)}(x)=x+t_1-t_2.
$
Take any $f \in L^\alpha(\bbr,\mu)$ and define a stationary $S\alpha S$ random field
$\{X_{(t_1,t_2)}\}$ as \linebreak
$X_{(t_1,t_2)} \eqdef \int_{\mathbb{R}} f\big(\phi_{(t_1,t_2)}(x)\big)\,
M(dx)$, $t_1,t_2 \in \mathbb{Z}$,
where $M$ is an $S \alpha S$ random measure on $\mathbb{R}$ with
control measure $\mu$. This representation of
$\{X_{(t_1,t_2)}\}$ is of the form $(\ref{repn_integral_stationary})$ with $c_{(t_1,t_2)} \equiv
1$.

As computed in \cite{roy:2010a}, in this case, $K=\{(t_1,t_2)\in \mathbb{Z}^2:\,t_1=t_2\}$, $p=d-p=l=1$, $H=F=\{(u_1,0):\,u_1 \in \mathbb{Z}\}$, and
$U=(1,0)^T$, $V=(1,1)^T$ so that $\Delta = [-2,2]$ and for all $y \in [-2,2]$,
\[
Q_y=\left\{
\begin{array}{ll}
\,[-(1+y),1], &\;\;\;y \in \left[-2,0\right), \\
\,[-1,1-y], &\;\;\;y \in [0,2].
\end{array}
\right.
\]

There is a standard Borel space
$(W,\mathcal{W})$ with a $\sigma$-finite measure $\nu$ on it such
that \eqref{mixed_moving_avg_repn_of_X_r_r_in_H} holds for some $h \in L^{\alpha}(W\times H, \nu \otimes \zeta_H)$, where
$\zeta_H$ is the counting measure on $H$, and  $M^\prime$ is a $S\alpha
S$ random measure on $W \times H$ with control measure $\nu \otimes
\zeta_H$. Note that for $u, s \in H$ with $u=(u_1, 0)$ and $s=(s_1, 0)$, $u \oplus s=(u_1+s_1,0)$, and $\pi(w_1, w_2)=(w_1-w_2, 0)$. Therefore, in this example, $\psi_H(x,v, (u_1,0))=\{xh(v, (u_1-w_1+w_2,0))\}_{-q \leq w_1, w_2 \leq q}$.

It was shown in \cite{roy:2010a} that
$
n^{-1} \sum_{|t_1|,\,|t_2| \leq n} \delta_{(4n)^{-1/\alpha}X_{(t_1,t_2)}}
$
converges weakly to a random element in the space of all Radon measures on \linebreak $[-\infty,\infty]\setminus\{0\}$. We take a sequence $\norm_n$ satisfying $n^{1/\alpha}/\norm_n \to 0$ and apply Theorem \ref{thm:main:proc:cons} to conclude that the following HLS convergence holds in $\mathbf{M}_0(\mathbb{M}^q)$:
\begin{align*}
&\frac{\norm_n^\alpha}{n}\bbP(\Lamb^q_n \in \cdot) \rightarrow \mu|_{[-2,2]} \otimes \nu_\alpha \otimes \nu \Big(\Big\{(y,x,v) \in [-2,2] \times ([-\infty,\infty] \setminus \{0\}) \times W: \\
&\;\;\;\;\;\;\;\;\;\;\;\;\;\;\;\;\;\;\;\;\;\;\;\;\;\;\;\;\;\;\;\;\;\;\;\;\;\;\;\;\;\;\;\;\;\;\;\;\;\;\;\;\;\;\;\;\;\;
\int_{Q_y}\,\sum_{u_1 \in \mathbb{Z}} \delta_{\left((y+\lambda,\lambda),\,\psi_H(x,v,(u_1,0))\right)} \,d\lambda\in \cdot\Big\}\Big),
\end{align*}
where
$
\Lamb^q_n=n^{-1}\sum_{|t_1|,\,|t_2| \leq n} \delta_{\left(n^{-1}(t_1, t_2), \; \norm_n^{-1}\{X_{(t_1-w_1,t_2-w_2)}\}_{-q \leq w_1, w_2 \leq q}\right)}
$.
%\hfill$\Box$
\end{example}

%\begin{remark} \label{remark:point:cons}
%A special case of \cref{thm:main:proc:cons}, where we neglect the time component, is  the limit result as
%$n\to\infty$,
%\begin{eqnarray} \label{eq:large_deviation_point}
%\lefteqn{
%\hspace*{-0.2cm}\frac{\norm_n^\alpha}{n^p}\bbP\left(n^{p-d}\sum_{\|t\|_\infty \leq n} \delta_{\norm_n^{-1}X_t} \in \cdot\right)%}\\
%&&\rightarrow l
%    (\Leb|_\Delta\times \nu_\alpha \otimes \nu) \Big\{(y,x,v) \in \Delta\times([-\infty,\infty] \setminus \{0\}) \times W: \\
%&\;\;\;\;\;\;\;\;\;\;\;\;\;\;\;\;\;\;\;\;\;\;\;\;\;\;\;\;\;\
%&&\hspace*{4cm}
% \mathcal{V}(y)\sum_{u\in H} \delta_{xh(v,u)}\in \cdot\Big\}\\
   % \rightarrow l\Leb(\Delta)\bbP\left(\sum_{u \in H}\sum_{i=1}^{\infty}\mathcal{V}(\xi_i)\delta_{j_ih(v_i,u)}
      %  \in \cdot\right)
%\end{eqnarray}
%on $\mathbf{M}_0(\wt{\mathbb{M}}_p)$ with
%$\wt{\mathbb{M}}_p$ the space of Radon point measures on
%$[-\infty,\infty]\backslash\{0\}$,
%where $\sum_{i=1}^{\infty}\delta_{(\xi_i,j_i,v_i)}$ is \PRM$(\mbox{Leb}|_{\Delta}\otimes\nu_\alpha \otimes \nu)$.

%The asymptotic behavior of the large deviation point process result \eqref{eq:large_deviation_point} is  from the
%same structure as the classical point process result formulated in \eqref{thm:Roy2010:Theorem 4.1}
%(cf. \cite{roy:2010a}, Theorem~4.1).
%Presuming for a moment we could plug in  for $\norm_n$ in \eqref{eq:large_deviation_point} $(l \text{Leb}(\Delta)n^{p})^{1/\alpha}$
%then the  the HLS convergence result in \eqref{eq:large_deviation_point} implies the weak convergence in \eqref{thm:Roy2010:Theorem 4.1}.
%\hfill$\Box$
%\end{remark}

The following corollary is a direct consequence of \cref{thm:main:proc:cons}. Its proof is very similar to that of \cref{Corollary:Order Statistics} and hence is skipped.

\begin{cor} \label{corollary:4.1}
Let $y>0$. Then as $n\to\infty$,
\begin{eqnarray*}
    &&\frac{\norm_n^{\alpha}}{n^p}\mathbb{P}\left(\max_{\|t\|_{\infty}\leq n}X_t>\norm_ny\right)\\
    &&\quad \to l\text{Leb}(\Delta)y^{-\alpha}
        \int_W(\sup_{u\in H} h^+(v,u))^{\alpha}+(\sup_{u\in H} h^-(v,u))^{\alpha} \nu(dv). %\quad \mbox{ as } n\to\infty.
\end{eqnarray*}
In particular, with $\tau^a_n$ as defined in \cref{Corollary:Order Statistics},
\begin{eqnarray*}
    &&\lim_{n\to\infty}\frac{\norm_n^{\alpha}}{n^p}\mathbb{P}(\tau^a_n\leq \lambda n)\\
    &&\;\;\;\;\;\;\;\;=\lambda ^pa^{-\alpha}l\mbox{Leb}(\Delta)\int_W(\sup_{u\in H} h^+(v,u))^{\alpha}+(\sup_{u\in H} h^-(v,u))^{\alpha} \nu(dv).
\end{eqnarray*}
\end{cor}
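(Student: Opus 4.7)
The plan is to mirror the proof of \cref{Corollary:Order Statistics}, applying \cref{thm:main:proc:cons} with $q = 0$ to the set
\[
B(y) := \{\xi \in \mathbb{M}^0 : \xi([-1,1]^d \times (y, \infty]) > 0\},
\]
noting that $\{\max_{\|t\|_\infty \leq n} X_t > \gamma_n y\} = \{\Lamb^0_n \in B(y)\}$, so the first limit reduces to computing $\ka^0_\ast(B(y))$. To evaluate this I would use the explicit form of $\ka^0_\ast$: a generic measure $\xi_{y', x, v} := \int_{Q_{y'}} \sum_{u \in H} \delta_{(Uy' + V\lambda,\,xh(v,u))}\, d\lambda$ in its support lies in $B(y)$ iff $\sup_{u \in H} x\,h(v,u) > y$, since $Uy' + V\lambda \in [-1,1]^d$ on $Q_{y'}$ and $\Leb(Q_{y'}) = \mathcal{V}(y') > 0$ for $y'$ in the interior of $\Delta$. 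Splitting the $x$-integral against $\nu_\alpha$ by the sign of $x$ and using $\nu_\alpha(z, \infty] = z^{-\alpha}$ produces the factor $y^{-\alpha}[(\sup_u h^+(v,u))^\alpha + (\sup_u h^-(v,u))^\alpha]$; the prefactor $l\,\Leb(\Delta)$ and integration against $\nu(dv)$ then yield the stated right-hand side.

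The main obstacle is that $B(y)$ is neither bounded away from $\O$ nor a strict $\ka^0_\ast$-continuity set in the vague topology, since $\Lamb^0_n$ is $n^{p-d}$ times a sum of Dirac masses with $n^{p-d} \to 0$. I would bypass this by a sandwich argument: pick Lipschitz $g_\eta^\pm \in C_K^+(\mathbb{E}^0)$ with $g_\eta^- \leq \mathbf{1}_{[-1,1]^d \times (y,\infty]} \leq g_\eta^+$ and supports contained in $[-1,1]^d \times [y+\eta,\infty]$ and $[-1,1]^d \times [y-\eta,\infty]$ respectively, apply the HLS convergence \eqref{conv:kappa_n} to the functions $F_{g_\eta^\pm,\,g_\eta^\pm,\,\epsilon,\,\epsilon}$, and exploit the cluster structure of the conservative regime (clusters of size $\asymp n^{d-p}$, each contributing $\Theta(1)$ mass to $\Lamb^0_n$) to pass from $\{\Lamb^0_n(g_\eta^\pm) \geq \epsilon\}$-type statements back to $\{\Lamb^0_n \in B(y)\}$. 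Sending $\epsilon \downarrow 0$ and then $\eta \downarrow 0$ will match the upper and lower bounds with $\ka^0_\ast(B(y))$.

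The second statement then follows from the first via the identity
\[
\bbP(\tau^a_n \leq \lambda n) = \bbP\big(\max_{\|t\|_\infty \leq \lfloor \lambda n \rfloor} X_t > a \gamma_n\big)
\]
by a routine rescaling analogous to the last display in the proof of \cref{Corollary:Order Statistics}: applying the first statement with $n$ replaced by $\lfloor \lambda n \rfloor$ and $y$ replaced by $a$, and using $(\lfloor \lambda n \rfloor)^p / n^p \to \lambda^p$, produces the advertised factor $\lambda^p a^{-\alpha}$ in front of the integral.
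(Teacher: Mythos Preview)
Your overall plan and the computation of $\ka^0_\ast(B(y))$ are correct, and the reduction of the $\tau^a_n$ statement to the first via $\bbP(\tau^a_n\le\lambda n)=\bbP(\max_{\|t\|_\infty\le\lfloor\lambda n\rfloor}X_t>a\gamma_n)$ together with $(\lfloor\lambda n\rfloor)^p/n^p\to\lambda^p$ is exactly the paper's route. You are also right that the direct analogue of the Portmanteau argument in \cref{Corollary:Order Statistics} breaks down: in $\mathbb{M}^0$ (general Radon measures, not point measures) the set $B(y)=\{\xi:\xi([-1,1]^d\times(y,\infty])>0\}$ is \emph{not} bounded away from $\O$, since $\epsilon\,\delta_{(0,2y)}\in B(y)$ and $\epsilon\,\delta_{(0,2y)}\to\O$ vaguely. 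The paper's one-line ``similar to \cref{Corollary:Order Statistics}'' does not address this.

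Your sandwich strategy is viable, but the upper-bound step has a gap you should flag. The claim that ``each cluster contributes $\Theta(1)$ mass to $\Lamb^0_n$'' relies on $m(s,n)\asymp n^{d-p}$, and this fails for $s\in H_n$ near the boundary, where $m(s,n)/n^{d-p}$ can be arbitrarily small. Concretely, $\{\max_{\|t\|_\infty\le n}X_t>\gamma_n y\}$ is only contained in $\{\Lamb^0_n([-1,1]^d\times(y,\infty])\ge c\}\cup\{\exists\,s\in H_n: X_s>\gamma_n y,\ m(s,n)<cn^{d-p}\}$. To close the argument you need to show the second event is negligible: by stationarity and a union bound its $\frac{\gamma_n^\alpha}{n^p}$-probability is bounded by a constant times $n^{-p}|\{s\in H_n:m(s,n)<cn^{d-p}\}|$, which tends (by Lemma~5.1 of \cite{roy:2010a}) to $l\cdot\Leb\{y'\in\Delta:\mathcal V(y')<c\}$ and hence to $0$ as $c\downarrow 0$, since $\mathcal V>0$ on the interior of $\Delta$.

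A cleaner alternative that bypasses \cref{thm:main:proc:cons} altogether: under \eqref{assumption_on_c_t_for_t_in_K} one has $f_t=f_{\pi(t)}$ and hence $X_t=X_{\pi(t)}$ pointwise, so $\max_{\|t\|_\infty\le n}X_t=\max_{s\in H_n}X_s$. Since $\{X_s\}_{s\in H}$ is an $H$-stationary mixed moving average with $|H_n|\sim c\,n^p$, the first limit follows from (a straightforward $H$-indexed version of) the dissipative argument in \cref{Corollary:Order Statistics}, with the factor $2^d$ there replaced by $\lim_n |H_n|/n^p \cdot$(normalizing constants)$\,=l\,\Leb(\Delta)$.
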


\section{Large deviation of the partial sum} \label{section:classical:large deviation}
%\label{section:conservative:largedeviation}
 %\label{section:dissipative:large deviation}

%\subsection{Large devation} \label{section:large:deviation}
In this section, we use our point process large deviation results to investigate the classical large deviation behaviour for the partial sum sequence of stationary symmetric stable random fields. As before, we consider two cases depending on whether the underlying group action is dissipative or conservative. To fix the notations, let $\{X_t\}_{t\in\mathbb{Z}^d}$ be a stationary symmetric $\alpha$-stable random field as before
and define the partial sum sequence
\begin{eqnarray} \label{partial sum}
    S_n=\sum_{\|t\|_{\infty}\leq n}X_t, \quad n\in\bbn.
\end{eqnarray}

Using continuous mapping arguments from the results of
 Theorem~3.1 and Theorem~4.1, respectively in \cite{roy:2010a}, one can establish the following weak convergence results. If $\{X_t\}_{t\in\mathbb{Z}^d}$  is generated by a dissipative action as in \cref{thm:main:proc:diss} having representation \eqref{repn_Possion_integral_X_n} with kernel function $f\in L^{\alpha}(W\times \bbz^d,\nu\otimes\zeta)$
satisfying
\begin{eqnarray}    \label{assump:1}
    \int_W \left(\sum_{u\in\bbz^d} \left|f(v,u)\right|\right)^{\alpha}\nu({\rm d}v)<\infty,
\end{eqnarray}
then
$
n^{-d/\alpha}S_n\Rightarrow C_f Z_\alpha
$,
where $Z_\alpha \sim S\alpha S(1)$ and
\begin{eqnarray}
C_f^\alpha:=2^d\int_W\left(\left(\sum_{u\in\bbz^d}f(v,u)\right)^+\right)^{\alpha}%\, \nu({\rm d}v)
+\left(\left(\sum_{u\in\bbz^d} f(v,u)\right)^-\right)^{\alpha}\, \nu({\rm d}v). \label{def:C_f}
\end{eqnarray}
On the other hand, if $\{X_t\}_{t\in\mathbb{Z}^d}$ is generated by a conservative action
as in \cref{thm:main:proc:cons} with $h\in L^{\alpha}(W\times H,\nu\otimes\zeta_H)$
satisfying
\begin{eqnarray}    \label{assump:2}
    \int_W \left(\sum_{u\in H} \left|h(v,u)\right|\right)^{\alpha}\nu({\rm d}v)<\infty,
\end{eqnarray}
then
$
\displaystyle n^{p-d-p/\alpha}S_n\weak  C_{l,\mathcal{V},h}Z_\alpha
$,
where
\begin{align}
     C_{l,\mathcal{V},h}^\alpha&:=l\left(\int_\Delta (\mathcal{V}(y))^{\alpha}\,{\rm d}y\right) \times \nonumber\\
     & \;\;\;\;\;\;\;\;\;\;\int_W\left(\left(\sum_{u\in H}h(v,u)\right)^+\right)^{\alpha}
    +\left(\left(\sum_{u\in H} h(v,u)\right)^-\right)^{\alpha}\, \nu({\rm d}v). \label{def:C_lVh}
\end{align}
We do not present the proofs of the above statements because they will also follow from our large deviation results; see \cref{thm:large_deviation} and Remark~\ref{remark:A} below. Note that the normalization for weak convergence of partial maxima and partial sum sequences are the same in the dissipative case but not in the conservative case. This is because the longer memory results in huge clusters and this causes the partial sum to grow faster than the maxima.

The following theorem deals with the classical large deviation issue of the partial sum sequence $S_n$ under
 the assumptions of Theorems ~\ref{thm:main:proc:diss} and \ref{thm:main:proc:cons}, respectively.
 The convergence used in these results is as in \cite{hult:lindskog:2006} with the space $\mathbf{S} = \bbr$
 and the deleted point $s_0 =0$, i.e. $\mathbf{S}_0=\bbr\backslash\{0\}$. This results in the space $\mathbb{M}_0(\bbr)$ of all Borel measures
 on $\bbr \backslash \{0\}$ that are finite outside any neighbourhood of $0$.
 The convergence in $\mathbb{M}_0(\bbr)$ implies vague convergence
 in $\bbr \backslash \{0\}$; see Lemma 2.1. in \cite{Lindskog:Resnick:Roy}. %except that we do not have the points $\pm \infty$ anymore because no compactification (of real line) is necessary.

\begin{Theorem} \label{thm:large_deviation}
Let $\{X_t\}_{t\in\mathbb{Z}^d}$ be a stationary symmetric $\alpha$-stable random field
and $S_n$ be the partial sum sequence as defined in \eqref{partial sum}. Then the following large deviation results hold. \\
\noindent \textsl{(a)} \, If $\{X_t\}_{t\in\mathbb{Z}^d}$  is generated by a dissipative group action
 as in \cref{thm:main:proc:diss} having representation \eqref{repn_Possion_integral_X_n} with kernel function
 $f\in L^{\alpha}(W\times \bbz^d,\nu\otimes\zeta)$
satisfying \eqref{assump:1} and  $\{\gamma_n\}$ satisfying \eqref{gamma}, then % for any $B\in\mathcal{B}(\ov \bbr\backslash\{0\})$
\begin{eqnarray*}
    \frac{\gamma_n^{\alpha}}{n^d}\bbP(\gamma_n^{-1}S_n\in \cdot)
    \to C_f^\alpha\nu_{\alpha}(\cdot)\quad \mbox{ as } n\to\infty \mbox{ in }\mathbb{M}_0(\bbr),
\end{eqnarray*}
where $C_f$ is as in \eqref{def:C_f} and $\nu_\alpha$ is as in \eqref{defn:nu_alpha}. \\
\noindent \textsl{(b)} If $\{X_t\}_{t\in\mathbb{Z}^d}$ is generated by a conservative action
as in \cref{thm:main:proc:cons} with
 $h\in L^{\alpha}(W\times H,\nu\otimes\zeta_H)$
satisfying \eqref{assump:2}
and $\{\norm_n\}$ satisfying \eqref{beta}, then
% for any $B\in\mathcal{B}(\ov \bbr\backslash\{0\})$
\begin{eqnarray*}
    \mu_n(\cdot):=\frac{\norm_n^{\alpha}}{n^p}\bbP(n^{p-d}\norm_n^{-1}S_n\in \cdot)
    \to \mu(\cdot) \quad \mbox{ as } n\to\infty \mbox{ in }\mathbb{M}_0(\bbr),
\end{eqnarray*}
where
    $\mu(\cdot)= C_{l,\mathcal{V},h}^\alpha\nu_{\alpha}(\cdot)$
with $C_{l,\mathcal{V},h}$ as in \eqref{def:C_lVh} and  $\nu_\alpha$ as in \eqref{defn:nu_alpha}.
\end{Theorem}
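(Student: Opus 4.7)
The plan is to derive both parts as consequences of the point process large deviation results \cref{thm:main:proc:diss} and \cref{thm:main:proc:cons} (specialised to $q=0$) via continuous mapping. The key observation is that the ``second-coordinate integration'' functional
\[
T(\xi) := \int_{[-1,1]^d \times (\bbr\setminus\{0\})} y\,\xi(dt, dy)
\]
applied to $N_n^0$ returns $\gamma_n^{-1} S_n$ in part (a), and applied to $\Lambda_n^0$ returns $n^{p-d}\gamma_n^{-1}S_n$ in part (b). A direct calculation using the explicit form of $m_\ast^0$ and $\ka_\ast^0$ given in the two point process theorems shows that $m_\ast^0 \circ T^{-1} = C_f^\alpha \nu_\alpha$ and $\ka_\ast^0 \circ T^{-1} = C_{l,\mathcal{V},h}^\alpha \nu_\alpha$. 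Indeed, a generic configuration in the support of $m_\ast^0$ has the form $\sum_{u\in\bbz^d}\delta_{(t,\,xf(v,u))}$, which $T$ maps to $x\sum_u f(v,u) = x g(v)$, well-defined $\nu$-a.e. by \eqref{assump:1}; then the identity $\nu_\alpha(\{x: x g(v) > y\}) = ((g(v)^+)^\alpha + (g(v)^-)^\alpha)\, y^{-\alpha}$ for $y>0$ delivers the constant $C_f^\alpha$. The conservative computation is analogous, with the factor $\mathcal{V}(y)$ arising from the Lebesgue integration over $Q_y$.

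The obstacle is that $T$ is not vaguely continuous on $\mathbb{M}_p^0$ (resp. $\mathbb{M}^0$). I would use the standard truncation remedy: introduce
\[
T_\epsilon(\xi) := \int y\,\mathbf{1}_{\{|y|>\epsilon\}}\, \xi(dt, dy),\qquad \epsilon>0,
\]
which is vaguely continuous at any $\xi$ with $\xi(\{|y|=\epsilon\})=0$ and finite mass on $\{|y|>\epsilon\}$. This holds for all but at most countably many $\epsilon$ under both $m_\ast^0$ and $\ka_\ast^0$, and $T_\epsilon^{-1}(\{|z|>\delta\})$ is bounded away from the null measure $\O$. Invoking the HLS convergence of \cref{thm:main:proc:diss} and \cref{thm:main:proc:cons} together with the HLS Portmanteau theorem (Theorem~2.4 of \cite{hult:lindskog:2006}) yields
\[
\tfrac{\gamma_n^\alpha}{n^d}\, \bbP(T_\epsilon(N_n^0) \in \cdot) \to m_\ast^0 \circ T_\epsilon^{-1}(\cdot), \qquad \tfrac{\gamma_n^\alpha}{n^p}\, \bbP(T_\epsilon(\Lambda_n^0) \in \cdot) \to \ka_\ast^0 \circ T_\epsilon^{-1}(\cdot),
\]
in $\mathbb{M}_0(\bbr\setminus\{0\})$, and letting $\epsilon\to 0$ on the right-hand sides is harmless by dominated convergence under \eqref{assump:1} and \eqref{assump:2}.

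The main technical hurdle, which I expect to take most of the work, is the converging-together estimate
\[
\lim_{\epsilon \to 0}\,\limsup_{n \to \infty}\, \tfrac{\gamma_n^\alpha}{n^d}\, \bbP\bigl(|T(N_n^0) - T_\epsilon(N_n^0)| > \delta\bigr) = 0, \quad \delta>0,
\]
and its analogue with scaling $\gamma_n^\alpha/n^p$ for part (b). Using \eqref{PRM:underlying} and interchanging the order of summation, the quantity inside the probability equals $\gamma_n^{-1}$ times a symmetric small-jump Poisson stochastic integral $S_n^{\leq\epsilon} := \sum_{i:\,|j_i| \leq \epsilon\gamma_n} j_i\, G_n(v_i, u_i)$ with $G_n(v, u) := \sum_{\|t\|_\infty \leq n} f(v, u-t)$, whose Lévy measure is supported on $[-\epsilon\gamma_n, \epsilon\gamma_n]$ and therefore admits the finite variance $\tfrac{2\alpha}{2-\alpha}(\epsilon\gamma_n)^{2-\alpha} \int_W \sum_u G_n(v,u)^2\,\nu(dv)$. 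Together with the elementary bound $\sum_u G_n(v,u)^2 \leq (2n+1)^d \bigl(\sum_u|f(v,u)|\bigr)^2$ and Chebyshev's inequality, this would settle matters if the squared sum were $\nu$-integrable; when only \eqref{assump:1} is at hand, a two-level truncation (combining $L^2$ control of the innermost jumps with a Markov bound at exponent $\alpha$ for the intermediate atoms $|j_i| \in (\epsilon_1 \gamma_n, \epsilon \gamma_n]$) provides a bound of the required form $O(\epsilon^{2-\alpha})\cdot n^d/\gamma_n^\alpha$. Part (b) proceeds along identical lines after rewriting the sum on the dissipative subfield $\{X_s\}_{s \in H}$: the factor $n^{p-d}$ in front of $S_n$ is exactly what is needed to absorb the cluster-size $|C_{s,n}|\sim n^{d-p}$ from \eqref{rate_of_growth:H_n}, and \eqref{assump:2} plays the role of \eqref{assump:1}.
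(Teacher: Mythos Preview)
Your overall strategy---pushing \cref{thm:main:proc:diss} and \cref{thm:main:proc:cons} through a summation functional with a value-truncation $T_\epsilon$---is exactly how the paper begins, and your identification of the limit measures $m^0_\ast\circ T^{-1}$ and $\ka^0_\ast\circ T^{-1}$ is correct. The gap is in the converging-together step. The difference $T(N_n^0)-T_\epsilon(N_n^0)$ equals $\gamma_n^{-1}\sum_{\|t\|_\infty\le n} X_t\,\mathbf{1}_{\{|X_t|\le \epsilon\gamma_n\}}$, and this is \emph{not} the truncated Poisson integral $\sum_{i:|j_i|\le \epsilon\gamma_n} j_i\,G_n(v_i,u_i)$ you write down: the indicator acts on the full sum $X_t=\sum_i j_i f(v_i,u_i-t)$, not on individual jumps, so you cannot interchange summation with the truncation. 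Consequently neither the variance computation nor the proposed ``two-level truncation'' applies to the quantity you actually need to bound. In the conservative case the same misidentification occurs, and there the cluster structure makes the small-value sum even harder to control directly.

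The paper resolves this by inserting a \emph{kernel} truncation before the value truncation. One replaces $h$ by $h_T:=h\,\mathbf{1}_{W\times H_T}$ (or $f$ by $f\,\mathbf{1}_{W\times[-T\mathbf{1}_d,T\mathbf{1}_d]}$ in part (a)); the resulting field $X^{(T)}$ has the crucial property that $X^{(T)}_{s_1}$ and $X^{(T)}_{s_2}$ are independent once $s_1,s_2$ are separated by more than $2T$ in the relevant sense. This independence, combined with Karamata's theorem and Chebyshev, gives the estimate
\[
\lim_{\epsilon\downarrow 0}\,\limsup_{n\to\infty}\,\frac{\gamma_n^\alpha}{n^p}\,\bbP\Big(\Big|\sum_t X_t^{(T)}\mathbf{1}_{\{|X_t^{(T)}|\le\epsilon\gamma_n\}}\Big|>\delta\gamma_n n^{d-p}\Big)=0
\]
for each fixed $T$ (the block argument for part (b) is somewhat delicate and uses the lattice structure of $H$). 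One then removes the kernel truncation by computing the $S\alpha S$ scale parameter $\sigma_{T,n}$ of $S_n-S_n^{(T)}$ explicitly and showing $\sigma_{T,n}/n^{p/\alpha+(d-p)}\to 0$ as $n\to\infty$ and then $T\to\infty$, which is where \eqref{assump:1}/\eqref{assump:2} enters. Your sketch is missing this extra layer; without it the small-value negligibility does not go through.
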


\noindent The proof of this theorem is presented in the next subsection. For the point process large deviation result, we gave the detailed proof of the dissipative case and sketched the proof in the conservative case. In this case, we shall present the detailed proof of this theorem when the underlying action is conservative. The other case will follow similarly.

\begin{Remark} \label{remark:A}
(a) Let $\{X_t\}_{t\in\bbz^d}$ be an S$\alpha$S process. Then
%\begin{eqnarray*}
    $S_n$ defined by \eqref{partial sum}
%\end{eqnarray*}
is an S$\alpha$S random variable as well.  We denote its scaling parameter
by $\sigma_n$. This means $S_n\stackrel{\mbox{\tiny d}}{=}\sigma_n Z_\alpha$ with $Z_\alpha\sim$S$\alpha$S$(1)$. If $\{\gamma_n\},\{c_n\}$ are sequences
of positive constants satisfying $n^{\ka}/\gamma_n\to 0$ for some $\ka>0$, then the following equivalences
hold for $C>0$:
\begin{itemize}
    \item[(i)] ${\displaystyle \frac{\gamma_n}{n^{\ka}c_n}\sigma_n\to C}$ as $n\to\infty$.
    \item[(ii)] ${\displaystyle \frac{\gamma_n^{\alpha}}{n^{\alpha\ka}}\mathbb{P}(c_n^{-1}S_n\in\cdot)\to C^\alpha\nu_\alpha(\cdot)}$ as $n\to\infty$ in $\mathbb{M}_0(\bbr)$.
    \item[(iii)] ${\displaystyle \frac{\gamma_n}{n^{\ka}c_n}S_n\Rightarrow CZ_\alpha}$ as $n\to\infty$.
\end{itemize}
Consequently, the large deviation behaviors in \Cref{thm:large_deviation}  imply the weak convergence results presented in the beginning of this section, and vice versa.
 %l\left(\int_\Delta V(y)^{\alpha}\,\right)S\alpha S

(b) If $\alpha\in\left(0,1\right]$ and $f\in L^{\alpha}(W\times \bbz,\nu\otimes\zeta)$
then assumption \eqref{assump:1} is satisfied. However, for $\alpha\in\left(1,2\right)$
this is unfortunately not necessarily the case. To see this, let $\{X_t\}_{t\in\bbz}$
be a moving average process of the form
$
    X_t=\sum_{j=-\infty}^t \beta_{t-j}Z_j
$,  $t\in\bbz$,
where $(Z_j)_{j\in\bbz}$ is an iid sequence following an S$\alpha$S(1) distribution with $\alpha>1$ and $\beta_j=j^{-\norm}$, $j\in\bbn$, for some $\alpha^{-1}<\norm<1$.
Clearly, \eqref{assump:1} is not satisfied since $\sum_{j=0}^\infty |\beta_j|=\infty$. Theorem~1 in \cite{Astrauskas:1983a} says that
$n^{-1/\alpha-1+\norm}S_n\Rightarrow CZ_{\alpha}$ as $n\to\infty$ for some $C >0$.
Hence, $\sigma_n\sim Cn^{1/\alpha+1-\gamma}$.
A conclusion of the equivalences in (a) is that for any sequence $\{\gamma_n\}$ with $n^{1/\alpha+(1-\norm)}/\gamma_n\to 0$ as $n\to\infty$,
\begin{eqnarray} \label{ex:scaling}
    \frac{\gamma_n^{\alpha}}{n^{1+(1-\norm)\alpha}}\mathbb{P}(\gamma_n^{-1}S_n\in\cdot)\to C^\alpha\nu_\alpha(\cdot) \quad \mbox{ as } n\to\infty \mbox{ in }\mathbb{M}_0(\bbr).
\end{eqnarray}
We see that the scaling in the large deviation behavior  in \cref{thm:large_deviation}~(a) under
assumption~\eqref{assump:1} differs from the scaling in \eqref{ex:scaling}.
Further examples for moving average processes with $\sum_{j=0}^{\infty}|\beta_j|=\infty$ whose scaling
$\sigma_n$ satisfies $n^{-1/\alpha}\sigma_n\to\infty$ can be found in \cite{whitt:2002}, \cite{Astrauskas:1983b,Astrauskas:1983a} and
\cite{Hsing:1999}.
\end{Remark}

\subsection{Proof of \cref{thm:large_deviation}}
As discussed earlier, we will prove this theorem only for the conservative case \textsl{(b)}. The dissipative case \textsl{(a)} can be dealt with in a similar fashion.

We shall first prove \cref{thm:large_deviation}~(b) for $h$ supported on $W\times H_T$ for some $T \geq 1$, and then use a converging together argument. To this end, for all $T \in \mathbb{N}$, set $h_T=h\1_{W\times H_T}$ and define $X_t^{(T)}$,
$\mu_{n,T}$, $\mu_T$ and $C_{l,\mathcal{V},h_{T}}$ by replacing $h$ by $h_T$ in the definition of
$X_t$,
$\mu_{n}$, $\mu$ and $C_{l,\mathcal{V},h}$, respectively.

\begin{Lemma} \label{Proposition:large_deviation:conservative}
Let $S_n^{(T)}=\sum_{\|t\|_\infty\leq n}X_t^{(T)}$, $n\in\bbn$. Then
\begin{eqnarray*}
    \mu_n^{(T)}(\cdot):=\frac{\norm_n^{\alpha}}{n^p}\bbP(n^{p-d}\norm_n^{-1}S_n^{(T)}\in \cdot)
    \to \mu^{(T)}(\cdot) \quad \mbox{ as } n\to\infty \mbox{ in }\,\mathbb{M}_0(\bbr).
\end{eqnarray*}
%where
%\begin{eqnarray*}
%    \mu^{(T)}(\cdot)&=&\nu_{\alpha}(\cdot)l\left(\int_C\mathcal{V}(y)^{\alpha}\,\right)\int_W\left(\left(\sum_{u \in H_T}h(v,u)\right)^+\right)^{\alpha}\\
%        &&\hspace{1.4in}+\left(\left(\sum_{u\in H_T}h(v,u)\right)^-\right)^{\alpha}\, \nu({\rm d}v).
%\end{eqnarray*}
\end{Lemma}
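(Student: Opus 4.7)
The strategy is to derive the lemma from the point-process large deviation principle of \cref{thm:main:proc:cons} together with a continuous-mapping and truncation argument, following the template set by Theorem~3.1 of \cite{hult:samorodnitsky:2010}.  Apply \cref{thm:main:proc:cons} with $q=0$ to the field $\{X_t^{(T)}\}_{t\in\bbz^d}$, whose generating kernel $h_T$ is supported on $W\times H_T$.  For each $\epsilon>0$ introduce the truncated linear functional $\Phi_\epsilon:\mathbb{M}^0\to\bbr$ defined by
\begin{equation*}
\Phi_\epsilon(\xi):=\int_{[-1,1]^d\times\{|y|>\epsilon\}}y\,\xi(ds,dy),
\end{equation*}
which is continuous at any $\xi$ placing no mass on $\{|y|=\epsilon\}$.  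The key observation is that
\begin{equation*}
\Phi_\epsilon(\Lamb_n^0)=n^{p-d}\norm_n^{-1}\sum_{\|t\|_\infty\leq n}X_t^{(T)}\1_{\{|X_t^{(T)}|>\norm_n\epsilon\}}.
\end{equation*}

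By the Portmanteau theorem for HLS convergence (Theorem~2.4 of \cite{hult:lindskog:2006}), the push-forward $\mu_n^{(T,\epsilon)}(\cdot):=\frac{\norm_n^\alpha}{n^p}\bbP(\Phi_\epsilon(\Lamb_n^0)\in\cdot)$ then converges in $\mathbb{M}_0(\bbr)$ to the push-forward $\mu^{(T,\epsilon)}$ of $\ka_*^0$ under $\Phi_\epsilon$.  Evaluating $\Phi_\epsilon$ on the atomic measures in the definition of $\ka_*^0$ gives, for a.e.\ $(y,x,v)$,
\begin{equation*}
\Phi_\epsilon\!\left(\int_{Q_y}\sum_{u\in H}\delta_{(Uy+V\lambda,\,xh_T(v,u))}\,d\lambda\right)=\mathcal{V}(y)\,x\sum_{u\in H}h_T(v,u)\,\1_{\{|xh_T(v,u)|>\epsilon\}}.
\end{equation*}
Integrating against $l\,\mbox{Leb}|_\Delta\otimes\nu_\alpha\otimes\nu$, using the symmetry of $\nu_\alpha$ and its scaling $\nu_\alpha(c\,\cdot\,)=|c|^{-\alpha}\nu_\alpha$, and letting $\epsilon\downarrow 0$ produces exactly $\mu^{(T)}=C_{l,\mathcal{V},h_T}^\alpha\,\nu_\alpha$ on $\bbr\setminus\{0\}$.

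The main obstacle is the negligibility of the discarded low-level part in the large deviation regime: one must show that for every $\delta>0$,
\begin{equation*}
\lim_{\epsilon\downarrow 0}\limsup_{n\to\infty}\frac{\norm_n^\alpha}{n^p}\,\bbP\!\left(\Big|n^{p-d}\norm_n^{-1}\!\!\sum_{\|t\|_\infty\leq n}\!X_t^{(T)}\1_{\{|X_t^{(T)}|\leq\norm_n\epsilon\}}\Big|>\delta\right)=0.
\end{equation*}
Here the finite-support assumption on $h_T$ and assumption~\eqref{assump:2} are essential: each Poisson atom $(j_i,v_i,u_i)$ in the series representation of $X_t^{(T)}$ affects only $O(n^{d-p})$ of the $X_t^{(T)}$ with $\|t\|_\infty\le n$, because $h_T$ vanishes off $W\times H_T$ and each fibre $C_{s,n}$ has cardinality of that order.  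Splitting the Poisson marks at level $M\norm_n$ for large $M$, the contribution of marks exceeding $M\norm_n$ is controlled by the ``at most one large point'' argument already deployed in \Cref{lemma:diff:of:mhat:and:mtilde}, while the truncated, mean-zero, bounded sum coming from marks not exceeding $M\norm_n$ is handled by a Chebyshev-type variance estimate (for $\alpha>1$) or a Karamata-type moment bound on $\bbE[|X_t^{(T)}|^r\1_{\{|X_t^{(T)}|\le\norm_n\epsilon\}}]$ for $r\in(\alpha,2]$ (for $\alpha\in(0,1]$), both uniform in $T$ thanks to \eqref{assump:2}.  Combining this negligibility with the truncated convergence via a standard $\mathbf{M}_0$-approximation lemma (as in Section~2 of \cite{Lindskog:Resnick:Roy}) yields $\mu_n^{(T)}\to\mu^{(T)}$ in $\mathbb{M}_0(\bbr)$.
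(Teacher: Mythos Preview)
Your overall skeleton --- apply \cref{thm:main:proc:cons} with $q=0$ to the truncated field, push forward through a summation functional, let $\epsilon\downarrow 0$, and prove negligibility of the sub-$\epsilon$ part --- is exactly the template the paper follows (and attributes to Theorem~6.1 of \cite{hult:samorodnitsky:2010}). Two points, however, separate your sketch from the paper's argument.

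First, a technical gap: your $\Phi_\epsilon$ integrates the unbounded function $y$ over $\{|y|>\epsilon\}$, which in $[-\infty,\infty]\setminus\{0\}$ is a neighbourhood of $\pm\infty$. This is not a vaguely continuous functional on $\mathbb{M}^0$, and the claimed continuity at $\xi$ with $\xi(\{|y|=\epsilon\})=0$ is false without an upper cut-off. The paper uses the bounded, compactly supported $g_\epsilon(t,x)=x\1_{\{\epsilon<|x|\leq\epsilon^{-1}\}}$ instead and handles the resulting third piece $S_n^{(3)}$ within the converging-together step.

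Second, and more substantively, the negligibility step is where the paper's proof genuinely differs from yours. The paper does \emph{not} return to the Poisson series. It first rewrites the truncated sum as $\sum_{s\in H_n} m(s,n)X_s^{(T)}\1_{\{|X_s^{(T)}|\leq\norm_n\epsilon\}}$ with $m(s,n)\leq\kappa_0 n^{d-p}$, and then exploits that the finite support of $h_T$ makes $\{X_s^{(T)}\}_{s\in H}$ finitely dependent: $X_{s_1}^{(T)}$ and $X_{s_2}^{(T)}$ are independent once $N(s_1\ominus s_2)>2T$. A combinatorial partition of $H_n$ into $O((c^\ast T)^p\,l)$ sublattices $\{s_{k,i,j}:i\in\bbz^p\}$ of i.i.d.\ summands then reduces the problem to a single Chebyshev/Karamata second-moment bound, yielding the factor $\epsilon^{2-\alpha}$. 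Your Poisson-mark splitting is less direct here because the indicator $\1_{\{|X_t^{(T)}|\leq\norm_n\epsilon\}}$ acts on the full series $X_t^{(T)}$ rather than on individual marks, so neither the ``at most one large mark'' event nor a moment bound on the marks immediately controls the truncated-$X_t$ sum; you would still need to pass through some form of finite-range dependence (e.g.\ bounding covariances by noting there are only $O(n^p|H_{2T}|)$ nonzero ones) to close the argument.
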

\begin{proof}[\textbf{Proof}]
Since the proof is very similarly to the proof of Theorem~6.1
in \linebreak \cite{hult:samorodnitsky:2010} we will give only a short sketch.
 The idea is that for any $0<\epsilon<1$,  $S_n$ is divided into three parts
\begin{eqnarray*}
    S_n^{(T)}&=&\sum_{\|t\|_\infty\leq n}X_t^{(T)}\left[\1_{\{|X_t^{(T)}|\leq \epsilon\}}+\1_{\{\epsilon<|X_t^{(T)}|\leq \epsilon^{-1}\}}
        +\1_{\{|X_t^{(T)}|> \epsilon^{-1}\}}\right]\\
        &=:&S_n^{(1)}+S_n^{(2)}+S_n^{(3)}.
\end{eqnarray*}
In the following we investigate the second term. Define \linebreak $g_\epsilon:[-1,1]^d\times[-\infty,\infty]\backslash\{0\}\to\bbr$
with $g_\epsilon(t,x)=x\1_{\{\epsilon<|x|\leq \epsilon^{-1}\}}$.  Since
\begin{eqnarray*}
    &&\hspace*{-0.6cm}\kappa_{*,T}^{0}(\xi\in \mathbb{M}^0:\xi([-1,1]^d\times\{|x|=\epsilon\mbox{ or }\epsilon^{-1}\})>0)\\
        &&\hspace*{-0.3cm}\leq l Leb(\Delta)\sum_{u\in H_T}\nu_\alpha\otimes\nu\left(\left\{(x,v)\in[-\infty,\infty]\backslash\{0\}\times W:
        |xh(v,u)|=\epsilon \mbox{ or } \epsilon^{-1}\right\}\right)\\
        &&\hspace*{-0.3cm}=0,
\end{eqnarray*}
the continuous-mapping theorem (see Lemma A.2 in \cite{hult:samorodnitsky:2010}) and \cref{thm:main:proc:cons} give
\begin{eqnarray*}
    \lefteqn{\frac{\gamma_n^\alpha}{n^p}\mathbb{P}(n^{p-d}S_n^{(2)}\in\cdot)=\frac{\gamma_n^\alpha}{n^p}\mathbb{P}(g_\epsilon(\Lambda_{T,n}^{0})\in\cdot)}\\
    &&\to lLeb|_\Delta\otimes\nu_\alpha\otimes\nu\bigg(\bigg\{(y,x,v)\in\Delta\times [-\infty,\infty]\backslash\{0\}\times W:\\
    &&\hspace*{4.5cm}\mathcal{V}(y)\sum_{u\in H_T}xh(v,u)\1_{\{\epsilon<|xh(v,u)|\leq \epsilon^{-1}\}}\in\cdot\bigg\}\bigg)\\
    &&=:\mu^{(T)}_\epsilon(\cdot)
\end{eqnarray*}
as $n\to\infty$ in $\mathbb{M}_0(\bbr)$.  Moreover, for any bounded continuous map $g: \bbr \to \bbr$ that vanishes in a neighbourhood of $0$, say $(-\eta,\eta)$ for some $\eta>0$,
by dominated convergence  the limit
\begin{eqnarray*}
    \mu^{(T)}_\epsilon(g)\hspace*{-0.3cm}&=&\hspace*{-0.4cm} \int_\Delta \int_{\bbr\backslash\{0\}}\int_Wg\left(\mathcal{V}(y)\sum_{u\in H_T}xh(v,u)\1_{\{\epsilon<|xh(v,u)|\leq \epsilon^{-1}\}}\right) \nu(dv)\nu_\alpha(dx)dy\\
    &\to&\hspace*{-0.4cm}\int_\Delta \int_{\bbr\backslash\{0\}}\int_Wg\left(\mathcal{V}(y)\sum_{u\in H_T}xh(v,u)\right)\, \nu(dv)\,\nu_\alpha(dx)\,dy= \mu^{(T)}(g)
\end{eqnarray*}
holds as $\epsilon \to 0$.  Dominated convergence theorem can be applied in the above limit since $\mathcal{V}$ is bounded (Lemma~5.1 in \cite{roy:2010a}) and  we assume \eqref{assump:2}.

Finally, if we  show that for any $\delta>0$
\begin{eqnarray} \label{eq: 6.1}
    \lim_{\epsilon\downarrow 0}\limsup_{n\to\infty}
    \frac{\norm_n^{\alpha}}{n^p}\bbP\left(\left|\sum_{\|t\|_{\infty}\leq n}X_t^{(T)}\1_{\{|X_t^{(T)}|\leq\norm_n\epsilon\}}\right|>\norm_n n^{d-p}\delta\right)=0,
\end{eqnarray}
then Lemma~\ref{Proposition:large_deviation:conservative} will follow step by step as in the proof of Theorem~6.1
in \cite{hult:samorodnitsky:2010} by a converging together argument.

To prove \eqref{eq: 6.1}, note that %the sequence $(X_t^{(q)})_{t\in\bbz^d}$ is ????? $2T+1$-dependent.
\begin{eqnarray*}
    \lefteqn{\frac{\norm_n^{\alpha}}{n^p}\bbP\left(\left|\sum_{\|t\|_{\infty}\leq n}X_t^{(T)}\1_{\{|X_t^{(T)}|\leq \norm_n\epsilon\}}\right|>\norm_nn^{d-p}\delta\right)}\\
    &&=\frac{\norm_n^{\alpha}}{n^p}\bbP\left(\left|\sum_{s\in H_n}m(s,n)X_s^{(T)}\1_{\{|X_s^{(T)}|\leq \norm_n\epsilon\}}\right|>\norm_nn^{d-p}\delta\right)
\end{eqnarray*}
with $m(s,n):=|[-n\mathbf{1}_d,-n\mathbf{1}_d]\cap(s+K)|$ for $s\in H$.
First, we would like to point out that if $N(u\oplus s)\leq T$ for some $u\in H$, then
it can easily be shown that $N(s)-T\leq N(u)\leq N(s)+T$. Hence, we have the
representation
\begin{eqnarray*}
    X_s^{(T)} = \int_{W \times H_{N(s)+T}\cap H^c_{N(s)-T}} h_T(v,u \oplus s)\,M^\prime(dv,du).
\end{eqnarray*}
From this we see that
for $s_1,s_2\in H$ with $N(s_1)+T\leq N(s_2)-T$ the intersection
$H_{N(s_2)-T}^c\cap H_{N(s_1)+T}$ is empty so that $X_{s_1}^{(T)}$ and $X_{s_2}^{(T)}$ are independent.

Let $s_1,s_2\in H$ and $u\in H$ with $N(u\oplus s_1)\leq T$. Then
%\begin{eqnarray*}
 %   N(s_2\ominus s_1)=N((u\oplus s_2 )\ominus(u \oplus s_1))
  %              \leq N(u \oplus  s_2)+N(u \oplus s_1)
%\end{eqnarray*}
%and thus,
\begin{eqnarray} \label{ineq:N1}
    N(u\oplus s_2)\geq N(s_2\ominus s_1)- N(u \oplus s_1)\geq N(s_2\ominus s_1)-T. %\frac{1}{d}\|u_2-u_1-\pr_{\ov K}(u_2-u_1)\|_2-T.
\end{eqnarray}
We define the positive finite constant
\begin{eqnarray*}
    c:=\min\{\|Ui+V\norm\|_{\infty}:i\in\bbz^p\backslash\{\mathbf{0}_p\},\norm\in\bbr^q\},
\end{eqnarray*}
 and $c^*:=\inf\{z\in\bbn: 1/c\leq z\}=\lceil c^{-1}\rceil $.
If  $s_1:=x_k+U(c^*(2T+1)i_1+ j)\in H$ and $s_2:=x_k+U(c^*(2T+1)i_2+ j)\in H$ for some $i_1,i_2,j\in\bbz^p$, $i_1\not=i_2$, then
\begin{eqnarray} \label{ineq:N2}
    N(s_2\ominus s_1)&=&\min\{\|s_2-s_1+v\|_{\infty}:v\in K\} \nonumber\\
      %  &=&\min\{\|(2T+1)c^*U(i_1-i_2)+v\|_{\infty}:v\in K\}\nonumber\\
      %  &=&(2T+1)c^*\min\{\|U(i_1-i_2)+\frac{v}{c^*(2T+1)}\|_{\infty}:v\in K\}\nonumber\\
      %  &\geq&(2T+1)c^*\min\{\|U(i_1-i_2)+V\norm\|_{\infty}:\norm\in\bbr^q\}\nonumber\\
        &\geq &(2T+1)c^*\min\{\|Ui+V\norm\|_{\infty}:i\in\bbz^p\backslash\{\mathbf{0}_p\},\,\norm\in\bbr^q\}\nonumber\\
        &= &(2T+1).
\end{eqnarray}
A conclusion of \eqref{ineq:N1} and \eqref{ineq:N2} is that
%\begin{eqnarray}
    $N(u\oplus s_2)\geq T+1$
%\end{eqnarray}
and finally, \linebreak $X_{s_1}^{(T)}=X_{x_k+U(c^*(2T+1)i_1+ j)}^{(T)}$ and $X_{s_2}^{(T)}=X_{x_k+U(c^*(2T+1)i_2+ j)}^{(T)}$
are independent. In the following, we assume without loss of generality that $n+L$ is a multiple of $c^*(2T+1)$ where
$L:=\max_{k=1,\ldots,l}\|x_k\|_{\infty}$
and define
$n':=(n+L)/(c^*(2T+1))$.
This gives $H_n \subseteq  [-n\mathbf{1}_d,n\mathbf{1}_d]$ and
\begin{align*}
     H_n %&\subseteq \bigcup_{k=1}^l\{x_k+U\alpha: \,\alpha\in [-(n+L)\mathbf{1}_p,(n+L)\mathbf{1}_p]\} \\
        &\subseteq \bigcup_{k=1}^l\{x_k+U(c^*(2T+1)i+j): \, j\in [-c^*T\mathbf{1}_p,c^*T\mathbf{1}_p],\,i\in[- n'\mathbf{1}_p, n'\mathbf{1}_p]\}.
\end{align*}
We define $s_{k,i,j}:=x_k+U(c^*(2T+1)i+j)$ for $i,j\in\bbz^p$, $k\in\{1,\ldots,l\}$. Then
%\begin{eqnarray*}
     $H_n\subseteq \{s_{k,i,j}: \, k\in\{1,\ldots,l\},\,j\in [-c^*T\mathbf{1}_p,c^*T\mathbf{1}_p],\,i\in[- n'\mathbf{1}_p, n'\mathbf{1}_p]\}.$
%\end{eqnarray*}
The independence of the sequence $(X_{s_{k,i,j}}^{(T)})_{i\in\bbz^p}$  for fixed $j\in\bbz^p$ and
$k\in\{1,\ldots,l\}$, Markov's inequality and
Karamata's Theorem  (cf.~\cite{resnick:2007}, eq.~(2.5) on p.~36) result in
\begin{eqnarray*}
   \lefteqn{\frac{\norm_n^{\alpha}}{n^p}\bbP\left(\left|\sum_{s\in H_n}m(s,n)X_s^{(T)}\1_{\{|X_s^{(T)}|\leq \norm_n\epsilon\}}\right|>\norm_nn^{d-p}\delta\right)}\\
   % \lefteqn{\frac{\norm_n^{\alpha}}{n^p}\bbP\left(\left|\sum_{\|t\|_{\infty}\leq n}X_t^{(T)}\1_{\{|X_t^{(T)}|\leq \norm_n\epsilon\}}\right|>\norm_nn^{d-p}\delta\right)}\\
    %&\leq& \frac{\norm_n^{\alpha}}{n^p}\sum_{k=1}^l\sum_{j\in[-c^*T\mathbf{1}_p,c^*T\mathbf{1}_p]}\\
    % &&   \bbP\left(\left|\sum_{i\in[- n'\mathbf{1}_p, n'\mathbf{1}_p] \atop s_{k,i,j}\in H_n}m(s_{k,i,j},n)X_{s_{k,i,j}}^{(T)}\1_{\{|(X_{s_{k,i,j}}^{(T)}|\leq\norm_n\epsilon\}}\right|>\norm_nn^{d-p}\delta/(l(2T+1)^d)\right)\\
   % &\leq& \frac{\norm_n^{\alpha}}{n^p}\sum_{k=1}^l\sum_{j\in[-c^*T\mathbf{1}_p,c^*T\mathbf{1}_p]}\sum_{i\in[- n'\mathbf{1}_p, n'\mathbf{1}_p] \atop s_{k,i,j}\in H_n}\ldots\\
    %&&\hspace*{2cm}    \ldots\frac{l^2(2T+1)^{2d}}{\norm_n^2n^{2d-2p}\delta^2}m(s_{k,i,j},n)^2\bbE\left((X_1^{(T)})^2\1_{\{|X_1^{(T)}|\leq \norm_n\epsilon\}}\right)\\
    &&\leq {\rm const. }\,\norm_n^{\alpha}
            \bbP(|X_1^{(T)}|>\norm_n\epsilon)\epsilon^2 \frac{1}{n^p} \sum_{s\in H_n}\frac{m(s,n)^2}{n^{2(d-p)}}
            \leq {\rm const. }\,\epsilon^{2-\alpha}\stackrel{\epsilon\downarrow 0}{\to}0.
\end{eqnarray*}
In the last inequality, we used \eqref{rate_of_growth:H_n} and Lemma~5.1 in \cite{roy:2010a}, which says
that $m(s,n)/n^{(d-p)}$ is uniformly bounded.
%which results in \eqref{eq: 6.1}.
\end{proof}

In order to complete the converging together argument and establish Theorem~\ref{thm:large_deviation} (b) from Lemma~\ref{Proposition:large_deviation:conservative}, we need one more lemma.

\begin{Lemma} \label{Lemma:partial_sum:conservative}
$S_n-S_n^{(T)}\sim S\alpha S(\sigma_{T,n})$ where
\begin{eqnarray*}
    \lim_{T\to\infty}\limsup_{n\to\infty}\frac{\sigma_{T,n}}{n^{\frac{p}{\alpha}+(d-p)}}=0.
\end{eqnarray*}
\end{Lemma}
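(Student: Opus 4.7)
The plan is to exploit the mixed moving-average representation \eqref{mixed_moving_avg_repn_of_X_r_r_in_H} of the subfield $\{X_s\}_{s\in H}$ in order to express $S_n-S_n^{(T)}$ as a single stochastic integral against $M^\prime$ and then bound its scale parameter directly. Under \eqref{assumption_on_c_t_for_t_in_K}, the cocycle relation gives $X_t=X_{t_H}$ for every $t=t_H+t_K\in\bbz^d$, so grouping by the $H$-projection produces
\[
    S_n-S_n^{(T)}
    =\sum_{s\in H_n}m(s,n)(X_s-X_s^{(T)})
    =\int_{W\times H}g_{T,n}(v,u)\,M^\prime(dv,du),
\]
where $k:=h-h_T$ (so $k(v,u)=h(v,u)\1_{H\setminus H_T}(u)$) and $g_{T,n}(v,u):=\sum_{s\in H_n}m(s,n)k(v,u\oplus s)$. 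Being a stochastic integral against an $S\alpha S$ random measure, $S_n-S_n^{(T)}\sim S\alpha S(\sigma_{T,n})$ with $\sigma_{T,n}^{\alpha}=\int_W\sum_{u\in H}|g_{T,n}(v,u)|^{\alpha}\nu(dv)$. The estimate $m(s,n)\leq C n^{d-p}$ from Lemma~5.1 in \cite{roy:2010a} lets me pull out the factor $n^{(d-p)\alpha}$ and reduces the problem to controlling a norm of $k$ that vanishes as $T\to\infty$.

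For $\alpha\in(0,1]$ I would use the subadditivity $|\sum_s a_s|^{\alpha}\leq\sum_s|a_s|^{\alpha}$ together with the translation invariance of $\zeta_H$ and \eqref{rate_of_growth:H_n} to get
\[
    \sigma_{T,n}^{\alpha}\leq\sum_{s\in H_n}m(s,n)^{\alpha}\int_W\sum_{u\in H}|k(v,u\oplus s)|^{\alpha}\nu(dv)
    \leq C' n^{(d-p)\alpha+p}\|k\|_{\alpha}^{\alpha}.
\]
Since $\|k\|_{\alpha}\to 0$ as $T\to\infty$ by dominated convergence (the pointwise limit is zero and $\sum_u|h(v,u)|^{\alpha}$ is a $\nu$-integrable envelope coming from $h\in L^{\alpha}(W\times H,\nu\otimes\zeta_H)$), dividing by $n^{p/\alpha+(d-p)}$ and letting first $n\to\infty$ and then $T\to\infty$ handles this range.

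For $\alpha\in(1,2)$ subadditivity goes the wrong way and a direct Minkowski bound in $\ell^{\alpha}(H)$ loses a factor $|H_n|^{\alpha-1}\sim n^{p(\alpha-1)}$, which is too large. The essential trick is the interpolation inequality $\|a\|_{\ell^{\alpha}(H)}^{\alpha}\leq\|a\|_{\ell^{\infty}}^{\alpha-1}\|a\|_{\ell^{1}}$ applied to the nonnegative sequence $a_v(u):=\sum_{s\in H_n}|k(v,u\oplus s)|$. Translation invariance of $\zeta_H$ gives $\|a_v\|_{\ell^{\infty}}\leq\sum_{u\in H}|k(v,u)|$ and $\|a_v\|_{\ell^{1}}=|H_n|\sum_{u\in H}|k(v,u)|$, whence
\[
    \sigma_{T,n}^{\alpha}\leq C^{\alpha}n^{(d-p)\alpha}\int_W\|a_v\|_{\ell^{\alpha}(H)}^{\alpha}\nu(dv)
    \leq C''n^{(d-p)\alpha+p}\int_W\Big(\sum_{u\in H}|k(v,u)|\Big)^{\alpha}\nu(dv).
\]
Assumption \eqref{assump:2} provides the integrable envelope $\big(\sum_u|h(v,u)|\big)^{\alpha}$ for the last integrand, so dominated convergence drives it to $0$ as $T\to\infty$, and once again $\sigma_{T,n}/n^{p/\alpha+(d-p)}\to 0$ after sending $n\to\infty$ followed by $T\to\infty$. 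This interpolation step is the only delicate part of the argument; everything else is routine bookkeeping with the explicit representation of $S_n-S_n^{(T)}$ and the uniform estimate on $m(s,n)$.
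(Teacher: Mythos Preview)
Your argument is correct and follows essentially the same route as the paper: express $S_n-S_n^{(T)}$ through the mixed moving-average representation, pull out the uniform bound $m(s,n)\leq Cn^{d-p}$, then use subadditivity of $x\mapsto x^{\alpha}$ for $\alpha\leq1$ and an interpolation-type estimate for $\alpha\in(1,2)$. The only organisational difference is that the paper first splits the integration domain into $W\times H_{n+T}$ and $W\times H_{n+T}^{c}$ and bounds the two resulting scale contributions $\sigma_{1,T,n}$ and $\sigma_{2,T,n}$ separately, whereas you work over all of $W\times H$ at once; your interpolation inequality $\|a\|_{\ell^{\alpha}}^{\alpha}\leq\|a\|_{\ell^{\infty}}^{\alpha-1}\|a\|_{\ell^{1}}$ is precisely the paper's ``ratio trick'' (writing $(\sum_{s}|k(v,u\oplus s)|/\sum_{j}|k(v,j)|)^{\alpha}\leq\sum_{s}|k(v,u\oplus s)|/\sum_{j}|k(v,j)|$ and then summing in $u$) recast in Lebesgue-space language. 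Your version is thus a mild streamlining of the same proof.
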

\begin{proof}[\textbf{Proof}] By the decomposition
\begin{eqnarray*}
    S_n-S_n^{(T)}\hspace*{-0.2cm}&=&\hspace*{-0.2cm}\sum_{s\in H_n}m(s,n)[X_s-X_s^{(T)}]\\
        %&=&\hspace*{-0.2cm}\sum_{s\in H_n}m(s,n)\int_{W\times H}h(v,u \oplus s)\1_{\{N(u\oplus s)>T\}}\,T'({\rm d}v,{\rm d}u)\\
    %    &=&\int_{W\times \{u\in\bbz:\,u>-n+q\}}\left(\sum_{k=-n}^{n\vee u-q}  f(v,u-k)\right)\,M({\rm d}v,{\rm d}u)\\
    %    &&\quad+\;\int_{W\times \{u\in\bbz:\,u<n-q\}}\left(\sum_{k=-n\wedge u+q}^{n}  f(v,u-k)\right)\,M({\rm d}v,{\rm d}u)\\
        &&\hspace*{-2.3cm}=\left[\int_{W\times H_{n+T}}+\int_{W\times H_{n+T}^c}\right]\left(\sum_{s\in H_n}m(s,n)  h(v,u\oplus s)\1_{\{N(u\oplus s)>T\}}\right)\,M'({\rm d}v,{\rm d}u),\\
       % &&\hspace*{0.7cm}+\;\int_{W\times \{u\in\bbz:\,u\geq n+q\}}\left(\sum_{k=-n}^{n}  f(v,u-k)\right)\,M({\rm d}v,{\rm d}u)\\
   %  &   &&\quad+\;\int_{W\times \{u\in\bbz:\,-n-q< u<n-q\}}\left(\sum_{k=u+q}^{n}  f(v,u-k)\right)\,M({\rm d}v,{\rm d}u)\\
      %  &&\hspace*{-0.2cm}+\int_{W\times H_{n+T}^c}\left(\sum_{s\in H_n }m(s,n)  h(v,u\oplus s)\right)\1_{\{N(u\oplus s)>T\}}\,M'({\rm d}v,{\rm d}u),
\end{eqnarray*}
the random variable $S_n-S_n^{(T)}$ is $S\alpha S$ with scale parameter
\begin{eqnarray} \label{sigma:2}
    \sigma_{T,n}=(\sigma_{1,T,n}^{\alpha}+\sigma_{2,T,n}^{\alpha})^{1/\alpha},
\end{eqnarray}
where
\begin{eqnarray*}
    \sigma_{1,T,n}^{\alpha}&=&\int_{W\times H_{n+T} }\left|\sum_{s\in H_n}
        m(s,n)h(v,u\oplus s)\right|^{\alpha}\1_{\{N(u\oplus s)>T\}}\zeta_H({\rm d}u) \nu({\rm d}v),\\
  %  \sigma_{2,T,n}^{\alpha}&=& \int_{W\times \{u\in\bbz:\,u\geq n+T\}}\left|\sum_{k=-n}^{n}  f(v,u-k)\right|^{\alpha}\xi({\rm d}u) \nu({\rm d}v), \\
  %  \sigma_{2,q,n}^{\alpha}&=& \int_{W\times \{u\in\bbz:\,-n-q< u<n-q\}}\left|\sum_{k=u+q}^{n}  f(v,u-k)\right|^{\alpha}\xi({\rm d}u) \nu({\rm d}v), \\
            \sigma_{2,T,n}^{\alpha}&=&\int_{W\times H_{n+T}^c}\left|\sum_{s\in H_n }
            m(s,n)h(v,u\oplus s)\1_{\{N(u\oplus s)>T\}}\right|^{\alpha}\zeta_H({\rm d}u) \nu({\rm d}v).
\end{eqnarray*}
In the following, we will use that there exists a constant $\ka_0$ such that \linebreak $m(s,n)/n^{(d-p)}\leq \ka_0$
for all $s\in H$ and $n\in\bbn$  (cf. \cite{roy:2010a}, Lemma~5.1) and $|H_{n+T}|\sim c (n+T)^p\sim cn^p$
(cf. \eqref{rate_of_growth:H_n}).
The first term in \eqref{sigma:2} has the representation
\begin{align} \label{eq:v1}
    \frac{\sigma_{1,T,n}^{\alpha}}{n^{p+\alpha(d-p)}}&=\frac{1}{n^p}\int_{W} \sum_{u\in H_{n+T}}\left|\sum_{s\in H_n }
                \frac{m(s,n)}{n^{d-p}}h(v,u\oplus s)\1_{\{N(u\oplus s)>T\}}\right|^{\alpha} \nu({\rm d}v) \nonumber\\
%\intertext{which becomes by a change of measure }
       % &\hspace*{-0.9cm}=\frac{1}{n^p}\int_{W} \sum_{u\in H_{n+T}}\left|\sum_{j\in\{j'\in H:j'\ominus u\in H_n\} }
       %         \frac{m(j\ominus u,n)}{n^{d-p}}h(v,j)\1_{\{N(j)>T\}}\right|^{\alpha} \nu({\rm d}v) \nonumber\\
       % &\hspace*{-0.9cm}\leq\frac{\mbox{const.}}{n^p}\int_{W} \sum_{u\in H_{n+T}}\left(\sum_{j\in H_T^c}\left|h(v,j)\right|\right)^{\alpha} \nu({\rm d}v) \nonumber\\
        &\hspace*{-0.9cm}\leq\mbox{const.}\frac{|H_{n+T}|}{n^p}\int_{W} \left(\sum_{j\in H_T^c}\left|h(v,j)\right|\right)^{\alpha} \nu({\rm d}v)\nonumber\\
        &\hspace*{-0.9cm}\stackrel{n\to\infty}{\longrightarrow}\mbox{const.}\int_{W} \left(\sum_{j\in H_T^c}\left|h(v,j)\right|\right)^{\alpha} \nu({\rm d}v)
            \stackrel{T\to\infty}{\longrightarrow}0
\end{align}
by dominated convergence and assumption \eqref{assump:2}.
%A long the same lines
%\begin{eqnarray} \label{eq:v3}
%        \limsup_{q\to\infty}\limsup_{n\to\infty}\frac{\sigma_{2,q,n}^{\alpha}}{n^d}=0.
%\end{eqnarray}
%Moreover,
%\begin{eqnarray*}
   % \frac{\sigma_{2,M,n}^{\alpha}}{n^{p+\alpha(d-p)}}=\frac{1}{n^{p+\alpha(d-p)}}\int_{W} \sum_{u\in H_{n+M}^c}
      %  \left|\sum_{s\in H_n}m(s,n)  h(v,u\oplus s)\1_{\{N(u\oplus s)>M\}}\right|^{\alpha}\nu({\rm d}v).
%\end{eqnarray*}
It is easy to check that, if $\alpha\leq 1$, then
\begin{align*}
    \frac{\sigma_{2,T,n}^{\alpha}}{n^{p+\alpha(d-p)}}&\leq\frac{1}{n^{p}}\int_{W} \sum_{u\in H_{n+T}^c}\left|\sum_{s\in H_n} \frac{m(s,n)}{n^{(d-p)}} |h(v,u\oplus s)|\1_{\{N(u\oplus s)>T\}}\right|^{\alpha}\nu({\rm d}v) \\
        &\leq\frac{\mbox{const. }}{n^{p}}\int_{W}  \sum_{s\in H_n}\sum_{u\in H_{n+T}^c} |h(v,u\oplus s)|^{\alpha}\1_{\{N(u\oplus s)>T\}}\nu({\rm d}v)\\
      %  &\leq\frac{\mbox{const. }}{n^{p}}\int_{W}  \sum_{s\in H_n}\sum_{j\in H_T^c\cap\{j':j' \ominus s\in H_{n+T}^c\}}|h(v,j)|^{\alpha}\nu({\rm d}v)\\
        %&\leq\mbox{const. }\frac{|H_n|}{n^p}\int_{W} \sum_{j\in H_T^c} |h(v,j)|^{\alpha}\nu({\rm d}v)\\
        &\leq\mbox{const. }\int_{W} \sum_{j\in H_T^c} |h(v,j)|^{\alpha}\nu({\rm d}v)\stackrel{T\to\infty}{\longrightarrow}0,
\end{align*}
by dominated convergence and $h\in L^{\alpha}(W\times H,\nu\otimes\zeta_H)$. On the other hand, if $1<\alpha<2$, then
%$$
%\frac{\sigma_{2,T,n}^{\alpha}}{n^{p+\alpha(d-p)}}\leq\mbox{const. }\int_{W} \left(\sum_{j\in H_T^c}\left|h(v,j)\right|\right)^{\alpha}\nu({\rm d}v)\stackrel{T\to\infty}{\to}0
%$$
%{\tcr von der 2.ten auf die dritte Ungleichung gibt es Probleme beim Absolutwert, wenn
%man die Bedingung mit $\sup$ schreiben w\"{u}rde, wie bei Hult und Samorodnitsky}
%\begin{comment}
\begin{align*}
    &\frac{\sigma_{2,T,n}^{\alpha}}{n^{p+\alpha(d-p)}}\\
    &\leq\mbox{const. }\int_{W} \sum_{u\in H_{n+T}^c}\left(\sum_{j\in H_T^c}\left|h(v,j)\right|\right)^{\alpha}
        \frac{1}{n^p} \times\\
    &\hspace{2in}    \left(\frac{\sum_{s\in H_n} \left| h(v,u\oplus s)\1_{\{N(u\oplus s)>T\}}\right|}{\sum_{j\in H_T^c}\left|h(v,j)\right|}\right)^{\alpha}\nu({\rm d}v)\\
        &\leq\mbox{const. }\int_{W} \left(\sum_{j\in H_T^c}\left|h(v,j)\right|\right)^{\alpha}
                     \times\\
        & \hspace{1.7in}\frac{\sum_{u\in H_{n+T}^c}\sum_{s\in H_n}
                     \left|h(v,u\oplus s)\right|\1_{\{N(u\oplus s)>T\}}}{n^p\sum_{j\in H_T^c}\left|h(v,j)\right|}\nu({\rm d}v)\\
   %     &\leq&\int_{W} \left(\sum_{j=q}^{\infty}\left|f(v,j)\right|\right)^{\alpha-1}
   %         \left(\sum_{j=q}^{\infty} \left| f(v,j)\right|\right)\nu({\rm d}v)\\
   %&\leq\mbox{const. }\int_{W} \left(\sum_{j\in H_T^c}\left|h(v,j)\right|\right)^{\alpha-1}
   %                  \frac{1}{n^p}\sum_{s\in H_n}\sum_{j\in H_T^c\cap\{j'\in H:j'\ominus s\in H_{n+T}^c\}\in H_{n+T}^c}
    %                 \hspace*{-1cm}\left|h(v,j)\right|\nu({\rm d}v)\\
        &\leq\mbox{const. }\int_{W} \left(\sum_{j\in H_T^c}\left|h(v,j)\right|\right)^{\alpha}\nu({\rm d}v)\stackrel{T\to\infty}{\to}0,
\end{align*}
%\end{comment}
by dominated convergence and assumption \eqref{assump:2}.
To summarize
\begin{eqnarray} \label{eq:v2}
        \lim_{T\to\infty}\limsup_{n\to\infty}\frac{\sigma_{2,T,n}^{\alpha}}{n^{p+\alpha(d-p)}}=0.
\end{eqnarray}
%In a similar fashion, we can prove that
%\begin{eqnarray} \label{eq:v4}
%        \limsup_{q\to\infty}\limsup_{n\to\infty}\frac{\sigma_{4,q,n}^{\alpha}}{n^d}=0.
%\end{eqnarray}
A conclusion of \eqref{sigma:2}-\eqref{eq:v2} is that
$
    \lim_{T \to \infty}\limsup_{n\to\infty}\frac{\sigma_{T,n}^{\alpha}}{n^{p+\alpha(d-p)}}=0.
$
\end{proof}

%For $g\in C_{0,b}(\bbr)$ we write
%\begin{eKnarray*}
%    \mu_n(g)=\frac{\gamma_n^{\alpha}}{n}E(f(\gamma_n^{-1}S_n))
%        \Kuad \mbox{ and } \Kuad
%    \mu(g)=\int_{} g(x)\, \mu({\rm d}x).
%\end{eKnarray*}
Now we are ready to prove Theorem~\ref{thm:large_deviation}~(b). We have to show \linebreak $\lim_{n\to\infty}\mu_n(g)=\mu(g)$ for any bounded continuous map $g: \bbr \to \bbr$ that vanishes in a neighbourhood of $0$; see Theorem 2.1 in \cite{hult:lindskog:2006}.
As noted in the appendix of \cite{hult:samorodnitsky:2010}, p.33,
we can further assume that $g$ is a Lipschitz function. For such a function $g$ and any $\delta>0$, $|\mu(g)-\mu_n(g)|$ is bounded by
\begin{eqnarray} \label{eq:A1:2}
    &&\hspace*{-0.5cm}\; |\mu(g)-\mu^{(T)}(g)|+\left|\mu^{(T)}(g)-\bbE\left(\frac{\norm_n^{\alpha}}{n^p}g(n^{p-d}\norm_n^{-1}S_n^{(T)})\right)\right| \nonumber\\
        &&\hspace*{-0.5cm}\;\;\;+\left|\bbE\left(\left(\frac{\norm_n^{\alpha}}{n^p}g(n^{p-d}\norm_n^{-1}S_n^{(T)})-\frac{\norm_n^{\alpha}}{n^p}g(n^{p-d}\norm_n^{-1}S_n)\right)
        \1_{\{n^{p-d}\norm_n^{-1}|S_n-S_n^{(T)}|> \delta\}}\right)\right|\nonumber\\
   &&\hspace*{-0.5cm}\;\;\;+ \left|\bbE\left(\left(\frac{\norm_n^{\alpha}}{n^p}g(n^{p-d}\norm_n^{-1}S_n^{(T)})-\frac{\norm_n^{\alpha}}{n^p}g(n^{p-d}\norm_n^{-1}S_n)\right)
        \1_{\{n^{p-d}\norm_n^{-1}|S_n-S_n^{(T)}|\leq \delta\}}\right)\right|\nonumber\\
   &&\hspace*{-0.5cm}\;=:I_{T,n,1}+I_{T,n,2}+I_{T,n,3}+I_{T,n,4}. \nonumber
\end{eqnarray}
We shall show that $\lim_{T\to\infty}\limsup_{n\to\infty}I_{T,n,i}=0$ for $i=1,2,3$, which combined with $\lim_{\delta\downarrow 0}\lim_{T\to\infty}\limsup_{n\to\infty}I_{T,n,4}=0$ will prove this theorem.

First, using dominated convergence and assumption~\eqref{assump:2}, we obtain
for any Borel $B\subseteq \bbr\backslash\{0\}$,
\begin{eqnarray} \label{eq:v5:2}
\mu^{(T)}(B)
%&=&\nu_{\alpha}(B)l\left(\int_C\mathcal{V}(y)^{\alpha}\,\right)\int_W\left(\left(\sum_{u\in H_T}h(v,u)\right)^+\right)^{\alpha}\, \nu({\rm d}v)\nonumber\\
%&&\hspace*{3.7cm}+\left(\left(\sum_{u\in H_T}h(v,u)\right)^-\right)^{\alpha}\, \nu({\rm d}v) \nonumber\\
\stackrel{T\to\infty}{\to}%&\;\nu_{\alpha}(B)l\left(\int_C\mathcal{V}(y)^{\alpha}\,\right)\int_W\left(\left(\sum_{u\in H} h(v,u)\right)^+\right)^{\alpha}\, \nu({\rm d}v)\nonumber\\
%&&&\hspace*{3.7cm}+\left(\left(\sum_{u\in H} h(v,u)\right)^-\right)^{\alpha}\, \nu({\rm d}v)\nonumber\\
        \mu(B).
\end{eqnarray}
A consequence of Portmanteau-Theorem (Theorem 2.4 in \cite{hult:lindskog:2006}) is $\mu^{(T)}\to \mu$ as $T\to\infty$ in $\mathbb{M}_0(\bbr)$, and
%\begin{eqnarray} \label{eq:A2:2}
    $\lim_{T\to\infty}\limsup_{n\to\infty}I_{T,n,1}=0.$
%\end{eqnarray}
Moreover,  \Cref{Proposition:large_deviation:conservative} results in
%\begin{eqnarray}
    $\lim_{T\to\infty}\limsup_{n\to\infty}I_{T,n,2}=0.$
%\end{eqnarray}

Next, for any $\delta>0$, we have
\begin{eqnarray*}
    I_{T,n,3}\leq \frac{\norm_n^{\alpha}}{n^p}2\|g\|_{\infty}\bbP(n^{p-d}\norm_n^{-1}|S_n-S_n^{(T)}|>\delta).
\end{eqnarray*}
Obviously, a conclusion of \Cref{Lemma:partial_sum:conservative} is that
$S_n-S_n^{(T)}\sim S\alpha S(\sigma_{T,n})$ with $\norm_n n^{d-p}\sigma_{T,n}^{-1}\to\infty$ if
$n^{p/\alpha}/\norm_n\to 0$ and hence
\begin{eqnarray*}
    \lim_{n\to\infty}\frac{\norm_n^{\alpha}}{n^p}\bbP(n^{p-d}\norm_n^{-1}|S_n-S_n^{(T)}|>\delta)
%    &=&\lim_{n\to\infty}\frac{\norm_n^{\alpha}}{n^p}\bbP(|Z_\alpha|>\delta \norm_n n^{d-p}\sigma_{T,n}^{-1})\\
    =\lim_{n\to\infty} \frac{\sigma_{T,n}^\alpha}{n^{p+\alpha(d-p)}}\bbP(|Z_\alpha|>\delta)=0,
\end{eqnarray*}
where $Z_\alpha \sim S\alpha S(1)$ . Therefore,
%\begin{eqnarray}
  $  \lim_{T\to\infty}\limsup_{n\to\infty}I_{T,n,3}=0.$
%\end{eqnarray}

Let $\eta>0$ such that $g(x)=0$ for $x\in(-\eta,\eta)$.
Suppose that $\delta<\eta/2$. If either $|g(n^{p-d}\norm_n^{-1}S_n)|>0$ or
$|g(n^{p-d}\norm_n^{-1}S_n^{(T)})|>0$, we have  $n^{p-d}\norm_n^{-1}|S_n^{(T)}|>\eta/2$
on $\{n^{p-d}\norm_n^{-1}|S_n-S_n^{(T)}|\leq \delta\}$. This results in
$$
    I_{T,n,4}\leq\sup_{|x-y|\leq \delta}|g(x)-g(y)|\frac{\norm_n^{\alpha}}{n^p}
                \bbP(n^{p-d}\norm_n^{-1}|S_n^{(T)}|>\eta/2).
$$
Using \Cref{Proposition:large_deviation:conservative}, \eqref{eq:v5:2} and the fact that $g$ is a Lipschitz function, it follows finally that
%\begin{eqnarray} \label{eq:A3:2}
 $  \lim_{\delta\downarrow 0}\lim_{T\to\infty}\limsup_{n\to\infty}I_{T,n,4}=0$. This proves Theorem~\ref{thm:large_deviation} (b). \\
%\end{eqnarray}
%Finally, \eqref{eq:A2:2}-\eqref{eq:A3:2} prove Theorem~\ref{thm:large_deviation}~(b).

\noindent \textbf{Acknowledgement.} The authors would like to thank Gennady Samorodnitsky for some useful discussions.

\end{document}